\documentclass[11pt, a4paper, final]{amsart}
\setlength{\emergencystretch}{2em}

\usepackage[T1]{fontenc}
\usepackage{lmodern}
\usepackage{mathtools}
\usepackage[utf8]{inputenc}

\topmargin0pt
\oddsidemargin0pt
\evensidemargin0pt
\textheight660pt
\textwidth445pt

\usepackage{amsfonts}
\usepackage{amsmath}
\usepackage{amssymb}
\usepackage{amsthm}
\usepackage{mathtools}
\usepackage{paralist, tabularx}
\usepackage{enumitem}
\usepackage[utf8]{inputenc}
\usepackage{mathabx,epsfig}
\usepackage{tikz-cd}
\usepackage{verbatim}

\usepackage{etoolbox}

\usepackage{xcolor}
\definecolor{green}{RGB}{0,127,0}
\definecolor{redd}{RGB}{191,0,0}
\definecolor{red}{RGB}{105,89,205}
\usepackage[colorlinks=true]{hyperref}

\usepackage[notref, notcite]{showkeys}
\usepackage[cmtip,arrow]{xy}
\usepackage[backend=biber,
url=false,
isbn=false,
backref=true,
citestyle=alphabetic,
bibstyle=alphabetic,
autocite=inline,
maxnames=99,
minalphanames=4,
maxalphanames=4,
sorting=nyt,]{biblatex}
\addbibresource{two_three_steps.bib}

\DeclareMathOperator{\UT}{UT}
\DeclareMathOperator{\T}{T}
\DeclareMathOperator{\Stab}{Stab}

\newcommand{\R}{{\mathbb{R}}}
\newcommand{\Z}{{\mathbb{Z}}}
\newcommand{\N}{{\mathbb{N}}}

\newcommand{\G}{{\bar{G}}}

\newcommand{\RR}{{\bar{R}}}

\newcommand{\RRa}{(\RR,+)}
\newcommand{\RRaD}{(\RR,+)^{0}}
\newcommand{\RRaT}{(\RR,+)^{00}}
\newcommand{\RRaI}{(\RR,+)^{000}}
\newcommand{\RRiD}{\RR^{0}}
\newcommand{\RRiT}{\RR^{00}}
\newcommand{\RRiI}{\RR^{000}}

\newcommand{\nref}[2]{\hyperref[#1]{\ref*{#1}$_{#2}$}}

\DeclareMathOperator{\topo}{{top}}

\DeclareMathOperator{\lin}{{Lin}}
\DeclareMathOperator{\Th}{{Th}}

\newtheorem{theorem}{Theorem}
\numberwithin{theorem}{section}
\newtheorem{lemma}[theorem]{Lemma}

\newtheorem{fact}[theorem]{Fact}
\newtheorem{proposition}[theorem]{Proposition}

\newtheorem{question}[theorem]{Question}
\newtheorem{corollary}[theorem]{Corollary}

\newtheorem{clm}{Claim}
\newtheorem*{clm*}{Claim}

\theoremstyle{definition}
\newtheorem{definition}[theorem]{Definition}

\newtheorem{example}[theorem]{Example}

\theoremstyle{remark}
\newtheorem{remark}[theorem]{Remark}

\AtEndEnvironment{proof}{\setcounter{clm}{0}}
\newenvironment{clmproof}[1][\proofname]{\proof[#1]}{\endproof}

\newcommand{\orcidlogo}{\includegraphics[height=\fontcharht\font`\B]{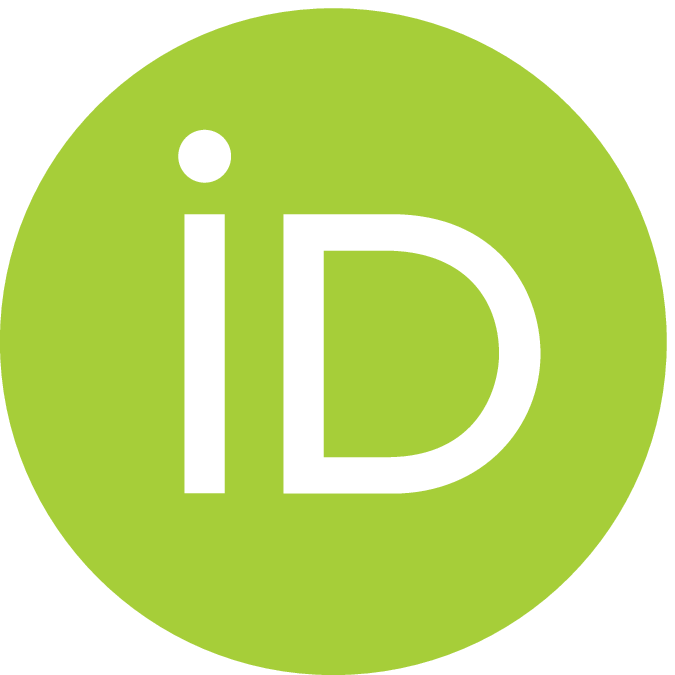}}
\newcommand{\orcid}[1]{\href{#1}{\orcidlogo #1}}

\title{Generating ideals by additive subgroups of rings}

\author{Krzysztof Krupi\'{n}ski}
\thanks{\noindent The first author is supported by the Narodowe Centrum Nauki grants no. 2016/22/E/ST1/00450 and 2018/31/B/ST1/00357.}
\author{Tomasz Rzepecki}
\thanks{The second author is supported by the Narodowe Centrum Nauki grants no. 2016/22/E/ST1/00450.}
\address{\parbox{\linewidth}{Instytut Matematyczny, Uniwersytet Wroc{\l}awski\\
pl. Grunwaldzki 2, 50-384 Wroc{\l}aw, Poland}}
\address{Krzysztof Krupi\'{n}ski \orcid{https://orcid.org/0000-0002-2243-4411}}
\email{Krzysztof.Krupinski@math.uni.wroc.pl}
\address{Tomasz Rzepecki \orcid{https://orcid.org/0000-0001-9786-1648}}
\email{Tomasz.Rzepecki@math.uni.wroc.pl}

\keywords{Finite index, bounded index, ideal, subgroup, model-theoretic connected components}
\subjclass[2020]{03C60, 03C98, 16B70, 16W80, 20A15}

\begin{document}

	\begin{abstract}
		We obtain several fundamental results on finite index ideals and additive subgroups of rings as well as on model-theoretic connected components of rings, which concern generating in finitely many steps inside additive groups of rings.

		Let $R$ be any ring equipped with an arbitrary additional first order structure, and $A$ a set of parameters.
		We show that whenever $H$ is an $A$-definable, finite index subgroup of $(R,+)$, then $H+R\cdot H$ contains an $A$-definable, two-sided ideal of finite index. As a corollary, we obtain a positive answer to Question 4.9 of \cite{GJK22}: if $R$ is unital, then $\RRaT_A + \bar R \cdot \RRaT_A + \bar R \cdot \RRaT_A = \bar R^{00}_A$, which also implies that $\bar R^{00}_A=\bar R^{000}_A$, where $\bar R \succ R$ is a sufficiently saturated elementary extension of $R$, $\RRaT_A$ [resp. $\bar R^{00}_A$] denotes the smallest $A$-type-definable, bounded index additive subgroup [resp. ideal] of $\bar R$, and $\bar R^{000}_A$ is the smallest invariant over $A$, bounded index ideal of $\bar R$. If $R$ is of positive characteristic (not necessarily unital), we get a sharper result: $\RRaT_A + \bar R \cdot \RRaT_A = \bar R^{00}_A$. We obtain a similar result (but with more steps required) for finitely generated (not necessarily unital) rings. We obtain analogous results for topological rings. The above result for unital rings implies that the simplified descriptions of the definable (and so also classical) Bohr compactifications of triangular groups over unital rings obtained in Corollary 4.5 of \cite{GJK22} are valid for all unital rings.

		We also analyze many concrete examples, where we compute the number of steps needed to generate a group by $(\bar R \cup \{1\}) \cdot \RRaT_A$ and study related aspects,
		showing ``optimality'' of some of our main results and yielding answers to some natural questions.
	\end{abstract}

	\maketitle

	\section{Introduction}

	Our main results belong to algebraic model theory and concern fundamental questions on relationships between model-theoretic connected components of rings and their additive groups, yielding, for a large class of definable rings, a very simple description of the former in terms of the latter, which implies that the ``hierarchy'' of the ring components is flat in those rings (see Theorem~\ref{theorem: Main Theorem} below, as well as Theorem~\ref{theorem: main topological} for the topological variant).
	Simultaneously, this is very strongly related to some purely algebraic issues concerning finite index ideals and subgroups of arbitrary rings,
	and all our results have interesting, purely algebraic reformulations.
	In particular, we prove the following
	theorem, which is repeated later as Theorem~\ref{theorem: main theorem} (and see also Corollary~\ref{corollary: main corollary}).

	\begin{theorem}\label{theorem: main algebraic theorem}
		Let $R$ be an arbitrary ring $\emptyset$-definable in a structure $M$ and $A \subseteq M$. Then for every $A$-definable finite index subgroup $H$ of $(R,+)$, the set $H+R\cdot H$ contains an $A$-definable, two-sided ideal of $R$ of finite index.
	\end{theorem}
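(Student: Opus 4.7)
The plan is to split the proof into two steps: first, show that $L := H + R \cdot H$ is itself an $A$-definable, finite index \emph{left} ideal of $R$; second, cut down to an $A$-definable, finite index two-sided ideal of $R$ inside $L$.

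For the first step, the algebraic content is immediate: $R \cdot L = RH + R^2 H \subseteq RH \subseteq L$, so $L$ is a left ideal, and $L \supseteq H$ forces $[R:L] \leq n$ where $n := [R:H]$. For $A$-definability, the plan is to work in the finite quotient $R/H$ via the $A$-definable projection $\pi : R \to R/H$. The image of the $A$-definable map $\mu : R \times H \to R/H$, $(r, h) \mapsto \pi(rh)$, is an $A$-definable subset of $R/H$. The subgroup $L/H = \pi(RH)$ of $R/H$ coincides with the subgroup of $R/H$ generated by $\operatorname{Im}(\mu)$. Since $R/H$ has only $n$ elements, this generated subgroup admits a uniformly bounded description: every element is a $\Z$-linear combination $\sum_{i=1}^{k} c_i s_i$ with $s_i \in \operatorname{Im}(\mu)$, $c_i \in \{0, 1, \ldots, n-1\}$, and $k \leq n$---equivalently, a signed sum of at most $n^2$ elements of $\operatorname{Im}(\mu)$. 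This yields a first-order $A$-definition of $L/H$, and pulling back through $\pi$ gives $A$-definability of $L$.

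For the second step, I would set $I := L \cap \{r \in R : rR \subseteq L\}$; the second conjunct is $A$-definable by the first step. The set $\{r : rR \subseteq L\}$ is the annihilator in $R$ of the left $R$-module $R/L$, which is a standard two-sided ideal; intersecting with the left ideal $L$ preserves the two-sided ideal structure, so $I$ is a two-sided ideal. For finite index: left multiplication induces a ring homomorphism $R \to \operatorname{End}_{\Z}(R/L)$ whose target is a finite monoid (as $R/L$ is a finite abelian group), so its kernel---the annihilator---has finite index in $R$, and intersecting with the finite-index $L$ keeps $I$ of finite index. Thus $I \subseteq L = H + R \cdot H$ is the desired $A$-definable, finite index two-sided ideal.

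The main obstacle is the $A$-definability in the first step, which the bounded-sum description of subgroups of finite abelian groups resolves; once $L$ is seen to be $A$-definable, the second step is a standard annihilator construction.
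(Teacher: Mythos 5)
Your proof collapses at the very first step. You assert that $L := H + R\cdot H$ is a left ideal, arguing $R\cdot L \subseteq R\cdot H \subseteq L$; but a left ideal must in particular be an additive subgroup, and $R\cdot H$ here is the set of products $\{rh : r\in R,\ h\in H\}$, which is not closed under addition, so $H+R\cdot H$ need not be a subgroup of $(R,+)$ at all. The sum $(h_1+r_1k_1)+(h_2+r_2k_2)$ has no reason to lie in $H+R\cdot H$. This is not a repairable technicality: the paper shows (Proposition~\ref{proposition: unbounded number of steps} and Example~\ref{example: Z2omega}, for the unital ring $R=\Z_2^\omega$) that for suitable finite index $H$ the set $R\cdot H$ does not even generate a group in boundedly many steps; since $R$ is unital there, $H+R\cdot H\subseteq (R\cdot H)^{+2}$, so if it were a subgroup it would contain the group generated by $R\cdot H$, a contradiction. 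As the introduction states explicitly, one cannot strengthen Theorem~\ref{theorem: main algebraic theorem} to say that $H+R\cdot H$ is a (left or two-sided) ideal; the entire content of the theorem is to locate a definable finite index ideal \emph{inside} this non-closed set. The same conflation infects your definability argument: $\pi(RH)$ is only the image of the map $\mu$, not the subgroup it generates, and if you replace $L$ by the left ideal generated by $H$ (which \emph{is} a definable finite index left ideal once one knows it is generated in finitely many steps), it is no longer contained in $H+R\cdot H$, which is exactly what the theorem requires.

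The paper's proof has to work much harder precisely at this point: it considers the set $T\subseteq R/H$ of classes $a$ such that $a\in RK/H$ for \emph{every} finite index subgroup $K$, proves by a delicate minimality/coset-independence argument (Lemma~\ref{lemma: main lemma}) that $T$ is a subgroup, produces an $A$-definable finite index subgroup $H_0\leq H$ with $R\cdot H_0/H = T$, and then uses genericity (Lemma~\ref{lemma: generation in finitely many steps by generic}) to see that the left ideal generated by $H_0$ is a finite sum of copies of $(R\cup\{1\})\cdot H_0$, hence $A$-definable and contained in $\pi_H^{-1}[T]\subseteq H+R\cdot H$; finally Fact~\ref{fact: ideal component} converts the finite index left ideal into a two-sided one. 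Your second step (intersecting with the annihilator of the module $R/L$ to pass from a left ideal to a two-sided one of finite index) is correct as far as it goes and is a reasonable substitute for that last appeal to Fact~\ref{fact: ideal component}, but it rests entirely on the false premise that $L=H+R\cdot H$ is a definable finite index left ideal.
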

	Forgetting about definability and model theory, this specializes (by taking the full structure on $R$) to the following (to our knowledge, hitherto unknown) purely algebraic statement: For an arbitrary ring $R$, for every finite index subgroup $H$ of $(R,+)$, the set $H+R\cdot H$ contains a two-sided ideal of $R$ of finite index.

	Using the above theorem, we obtain the main result of this paper, which answers positively Question 4.9 of \cite{GJK22}. Before we state it, let us discuss the context. We are working in a first order structure $M$ whose monster (i.e.\ sufficiently saturated and homogeneous) model is denoted by $\bar M$; if $X$ is a set definable in $M$, then $\bar X$ always denotes its interpretation in $\bar M$. By $A$ we denote a small (i.e.\ of size smaller than the degree of saturation of $\bar M$) subset of $\bar M$.

	For a definable group $G$ the model-theoretic connected components ${\bar G}^0_A$, ${\bar G}^{00}_A$, and ${\bar G}^{000}_A$ (see Section~\ref{section: preliminaries} for the definitions) play a fundamental role in the analysis of definable groups as first order structures and in the so-called ``definable topological dynamics''. They also lead to certain natural definable compactifications of $G$; for example, $\bar G/{\bar G}^0_M$ is the universal definable, profinite
	compactification of $G$, whereas $\bar G/{\bar G}^{00}_M$ is the universal definable (so-called definable Bohr) compactification. Taking the full structure on $G$ (i.e.\ when all subsets of all finite
	Cartesian powers of $G$ are $\emptyset$-definable), one can remove the adjective ``definable'' obtaining model-theoretic descriptions of classical compactifications of $G$. In \cite{GJK22}, a basic theory of model-theoretic connected components of rings was developed; in particular, the quotients by these components yield various compactifications of the ring in question. It was used in the computation of definable (and classical) Bohr compactifications of some matrix groups. This development led to some fundamental questions on relationships between connected components of definable rings and their additive groups. We answer these questions in the present paper.

	Now, we state our main result, which will be deduced from Theorem~\ref{theorem: main algebraic theorem}
	(see Corollaries~\ref{corollary: 3 steps} and \ref{corollary: 0=00=000} for the full statement).

	\begin{theorem}\label{theorem: Main Theorem}
		Let $R$ be a $\emptyset$-definable ring.
		\begin{enumerate}
			\item If $R$ is unital, then $\RRaT_A + \RR \cdot \RRaT_A + \RR \cdot \RRaT_A = \RR^{00}_A$.
			\item If $R$ is of positive characteristic (not necessarily unital), then $\RRaT_A + \RR \cdot \RRaT_A = \RR^{00}_A$.
		\end{enumerate}
		Consequently, for both classes of rings, we have $\RRiD_A=\RRiT_A=\RRiI_A$.
	\end{theorem}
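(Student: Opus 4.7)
The plan is to deduce Theorem~\ref{theorem: Main Theorem} from the algebraic Theorem~\ref{theorem: main algebraic theorem} via a compactness argument in $\bar M$. The inclusion of the left-hand side of the main equation in $\RR^{00}_A$ in both parts is immediate: by minimality of $\RRaT_A$ among $A$-type-definable bounded-index additive subgroups of $(\RR,+)$, we have $\RRaT_A \subseteq \RR^{00}_A$, and since $\RR^{00}_A$ is a two-sided ideal, $\RR \cdot \RRaT_A \subseteq \RR^{00}_A$, so the whole sum lies in $\RR^{00}_A$.

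For the reverse inclusion, I would apply Theorem~\ref{theorem: main algebraic theorem} in $\bar M$: for each $A$-definable finite-index additive subgroup $K$ of $\RR$, it yields an $A$-definable finite-index two-sided ideal $I_K \subseteq K + \RR \cdot K$. Each $I_K$ is an $A$-type-definable bounded-index ideal, so $\RR^{00}_A \subseteq I_K$ by minimality, whence
\[
\RR^{00}_A \;\subseteq\; \bigcap_K I_K \;\subseteq\; \bigcap_K (K + \RR \cdot K) \;=\; \RRaD_A + \RR \cdot \RRaD_A,
\]
the last equality being a routine saturation argument (realize in $\bar M$ the finitely satisfiable partial type ``$x = y_1 + r y_2$, $y_1,y_2 \in K$'' with $K$ ranging over all $A$-definable finite-index additive subgroups; finite subfamilies are witnessed by intersecting finitely many $K$'s). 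For part~(2), since $(\RR,+)$ has bounded exponent, every $A$-type-definable bounded-index additive subgroup is $A$-definable of finite index, so $\RRaT_A = \RRaD_A$, and the equation follows at once. For part~(1), $\RRaT_A$ may be strictly smaller than $\RRaD_A$; one extra product term is needed to compensate. Exploiting unitality (in particular $\RR \cdot \RR = \RR$ and $1 \cdot h = h$), the idea is to show that $\RRaT_A + \RR \cdot \RRaT_A + \RR \cdot \RRaT_A$ is itself an $A$-type-definable, bounded-index, two-sided ideal of $\RR$, so that by minimality it contains $\RR^{00}_A$.

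For the final assertion, the equality $\RRiT_A = \RRiI_A$ is obtained by running the parallel argument for $A$-invariant objects (which yields $\RR^{000}_A = \RRaI_A + \RR \cdot \RRaI_A + \RR \cdot \RRaI_A$, respectively without the extra product term in positive characteristic) and combining with the known result $\RRaT_A = \RRaI_A$ for definable abelian groups. The equality $\RRiD_A = \RRiT_A$ follows because the main equation, together with Theorem~\ref{theorem: main algebraic theorem} applied to a suitable $A$-definable finite-index additive subgroup, forces $\RR^{00}_A$ to contain an $A$-definable finite-index two-sided ideal, hence $\RR^{00}_A$ is itself $A$-definable of finite index and equals $\RR^{0}_A$. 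The main obstacle is the reverse inclusion in part~(1): unlike the positive-characteristic case, where it is essentially automatic from bounded exponent, in the unital case it cannot be handled by pure compactness, and requires using unitality essentially to bridge the gap between $\RRaT_A$ and $\RRaD_A$ within a single additional product term.
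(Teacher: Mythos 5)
Your reduction of part (2) to Theorem~\ref{theorem: main algebraic theorem} is essentially the paper's route, and the compactness step giving $\RR^{0}_A\subseteq\bigcap_K(K+\RR\cdot K)=\RRaD_A+\RR\cdot\RRaD_A$ is correct (it is in effect the paper's condition \nref{condition:X'_R0}{1\frac12}). But two of your supporting claims are wrong, and part (1) is not proved at all. First, in part (2) your justification ``every $A$-type-definable bounded-index additive subgroup is $A$-definable of finite index'' is false in bounded exponent: in $\Z_2^{\omega}$ the intersection of infinitely many independent definable index-$2$ subgroups is type-definable of bounded but infinite index. What you actually need is only $\RRaT_A=\RRaD_A$, which is Corollary~\ref{corollary: finite exponent implies 00=0} and is not ``essentially automatic'': it rests on the fact (via Pontryagin duality/Fact~\ref{fact:bohrIfOnly}) that compact groups of finite exponent are profinite. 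Second, and this is the main gap, for part (1) you only announce ``the idea is to show that $\RRaT_A+\RR\cdot\RRaT_A+\RR\cdot\RRaT_A$ is itself an $A$-type-definable bounded-index two-sided ideal'' and then concede the obstacle; but closure of this set under addition (let alone under multiplication by $\RR$) is exactly the content of the theorem, and no mechanism exploiting unitality is supplied. The paper's bridge is concrete: for a thick $A$-definable $D\supseteq\RRaT_A$, s-unitality gives $N$ with $NR\subseteq R\cdot D$ (Lemma~\ref{lemma:s-unital_thick_kR}); the quotient $R/NR$ has positive characteristic, so Corollary~\ref{corollary: finite exponent implies 00=0} yields an $A$-definable finite-index subgroup $H\subseteq D+NR$; Theorem~\ref{theorem: main algebraic theorem} applied to $H$ gives a finite-index ideal inside $H+R\cdot H\subseteq D+R\cdot D+R\cdot D$; and compactness together with $\RR^{00}_A=\RR^{0}_A$ for s-unital rings (Corollary~\ref{corollary: 0=00 in rings}, via Anzai's theorem, Fact~\ref{fact:anzai}) finishes. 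This is Proposition~\ref{proposition: equivalence of finite numbers of steps}(4), and nothing playing its role appears in your proposal.

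Your argument for the final assertion is also flawed. The claim that the main equation plus Theorem~\ref{theorem: main algebraic theorem} ``forces $\RR^{00}_A$ to contain an $A$-definable finite-index two-sided ideal, hence $\RR^{00}_A$ is itself $A$-definable of finite index'' is false in general: for $R=\Z$ with the full structure, $\RR^{00}_\emptyset=\RR^{0}_\emptyset=\bigcap_n n\ZZ$ has infinite index and contains no definable finite-index ideal. In the paper, $\RRiT_A=\RRiD_A$ is an independent input (Corollary~\ref{corollary: 0=00 in rings}, profiniteness of compact s-unital rings, resp.\ unitization in positive characteristic), not a consequence of the main equation; and $\RRiT_A=\RRiI_A$ follows simply because the main equation places $\RR^{00}_A$ inside the group generated by $(\RR\cup\{1\})\cdot\RRaT_A$, which equals $\RR^{000}_A$ by Fact~\ref{fact: occurrence of 000} --- no ``parallel argument for $A$-invariant objects'' (for which there is no invariant analogue of Theorem~\ref{theorem: main algebraic theorem} available) is needed.
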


	In fact, we prove that, more generally, item (i) holds for all s-unital rings (see Definition~\ref{definition: s-unital}). Whenever we talk about unital rings in this introduction, we can weaken this assumption to s-unital rings, which we will not do just to avoid additional terminology.

	Let $J$ be the subgroup of $(\RR,+)$ generated by $(\RR \cup \{1\})\cdot \RRaT_A$.
	By Lemma 4.7 of \cite{GJK22}, we know that $J =\RRiI_A$. And by Lemma 4.8 of \cite{GJK22}, we know that the following conditions are equivalent:
	\begin{enumerate}[label=(\roman*)]
		\item $\RRiI_A$ is type-definable,
		\item $\RRiI_A$ is generated by $(\RR \cup \{1\})\cdot \RRaT_A$ in finitely many steps,
		\item $\RRiI_A = \RR^{00}_A$.
	\end{enumerate}
	(Strictly speaking, in \cite{GJK22}, $A$ is a model, but using \cite{Mas18}, one concludes that it works also over any set of parameters, see Fact~\ref{fact: occurrence of 000}.)
	Therefore, by Theorem~\ref{theorem: Main Theorem}, we get that the above equivalent conditions hold for all unital rings and also for all rings of positive characteristic. Hence, the answer to Question 4.6 of \cite{GJK22} is positive, and so the simplified formulas for the definable (so also for classical) Bohr compactifications of groups of upper unitriangular as well as invertible upper triangular matrices over unital rings given in \cite[Corollary 4.5]{GJK22} are valid for all unital rings.
	Theorem~\ref{theorem: Main Theorem} may also prove to be useful to compute Bohr compactifications of some other matrix groups over unital rings, or even more general groups which are in some way ``controlled'' by rings.

	One of the main questions which remain open is whether in Theorem~\ref{theorem: Main Theorem} one can drop the assumption that $R$ is unital or of positive characteristic. More precisely,

	\begin{question}\label{question: introduction, finite number of steps for all rings}
		Does $(\RR \cup \{1\})\cdot \RRaT_A$ generate a group in finitely many steps for an arbitrary ring $R$? If yes, is there a bound on the number of steps?
	\end{question}

	Theorem~\ref{theorem: Main Theorem} implies that for unital rings 3 steps, and for rings of positive characteristic 2 steps, suffice.
	We also prove (see Theorem~\ref{theorem: bound for fin gen. rings}) that whenever $R$ is generated by $n$ elements (where $n \in \omega$), then for any $A$ containing those elements, $n+2$ steps suffice.

	One can also ask whether 3 is an optimal bound on the number of steps for all unital rings. We give examples where exactly 2 steps are needed, e.g.\ $R\coloneqq \Z[X]$ equipped with the full structure (i.e.\ with predicates for all subsets of all finite Cartesian powers of $R$) is a unital ring with this property (see Example~\ref{example: Z[X]}); in Example~\ref{example:independent_functionals}, we construct a commutative, unital ring of characteristic 2 where exactly 2 steps are needed, so 2 is the optimal bound in the second item of Theorem {\ref{theorem: Main Theorem}. Thus, the remaining question is whether one can decrease the number of steps in the first item of Theorem~\ref{theorem: Main Theorem} from 3 to 2. (In Definition~\ref{definition:conditions}, we will introduce notions of \emph{half-integer} numbers of steps which lead to more precise information and are used throughout this paper, but in the introduction we omit this terminology just for simplicity.)

	In Lemma~\ref{lemma:Z[x]_no_iii1}, we find a finite index, additive subgroup $H$ of $R\coloneqq \Z[X]$ for which $R\cdot H$ does not contain a two-sided ideal of finite index. This shows that ``2 steps'' in Theorem~\ref{theorem: main algebraic theorem} is an optimal bound.

	We also analyze many other classical examples, computing the numbers of steps needed in the context of Theorem~\ref{theorem: main algebraic theorem}, in the sense of generating a group by $(\RR \cup \{1\})\cdot \RRaT_A$, and in other senses.

	We apply Theorem~\ref{theorem: Main Theorem} to get an analogous result for model-theoretic connected components of topological rings and their additive groups (for the relevant definitions see Section~\ref{section: topological rings},
	for the full statement see Theorem~\ref{theorem: topological case} and Corollary~\ref{corollary: 0=00=000 for top. rings}):

	\begin{theorem}\label{theorem: main topological}
		Let $R$ be a topological ring.
		\begin{enumerate}[topsep=0pt]
			\item If $R$ is unital, then $\RRaT_{\topo} + \RR \cdot \RRaT_{\topo} + \RR \cdot \RRaT_{\topo} = \RR^{00}_{\topo}$.
			\item If $R$ is of positive characteristic (not necessarily unital), then $\RRaT_{\topo} + \RR \cdot \RRaT_{\topo} = \RR^{00}_{\topo}$.
		\end{enumerate}
		Consequently, for both classes of rings, we have $\RRiD_{\topo}=\RRiT_{\topo}=\RRiI_{\topo}$.
	\end{theorem}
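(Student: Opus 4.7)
The plan is to reduce Theorem~\ref{theorem: main topological} directly to Theorem~\ref{theorem: Main Theorem} by encoding the topology on $R$ as first-order data. Concretely, one expands $R$ to a first-order structure $M$ in a language containing a predicate for each basic open neighborhood of $0$ (or, more extravagantly, for every open set), together with the ring operations. Let $A$ be the set of parameters naming this chosen neighborhood base. Taking a sufficiently saturated elementary extension $\bar M \succ M$, the interpretations of these predicates generate a compatible ``$*$-topology'' on $\bar R$, and the topological components $\RRaT_{\topo}, \RR^{00}_{\topo}, \RRiD_{\topo}, \RRiI_{\topo}$ introduced in Section~\ref{section: topological rings} are identified with the model-theoretic components $\RRaT_A, \RR^{00}_A, \RRiD_A, \RRiI_A$ of this expanded structure. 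Once this identification is in place, items (i) and (ii), together with the final equality $\RRiD_{\topo}=\RRiT_{\topo}=\RRiI_{\topo}$, are transported verbatim from Theorem~\ref{theorem: Main Theorem}.

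The main technical obstacle is precisely the identification between the topological and the model-theoretic components after expansion. One inclusion is essentially by construction: any intersection of basic open neighborhoods of $0$ that is a bounded-index subgroup of $(\bar R,+)$ is $A$-type-definable in the expanded language, so the smallest $A$-type-definable bounded-index additive subgroup is contained in the topological one, and similarly for ideals and for the other flavors of components. The reverse inclusion, however, requires checking that the expansion does not introduce \emph{new} smaller type-definable bounded-index subgroups beyond what the topology alone produces. This follows from a standard saturation argument using that the predicates for the neighborhood base already determine, via translations and Boolean combinations, every $A$-type-definable subgroup up to the appropriate equivalence; it should be formulated as a general translation lemma in Section~\ref{section: topological rings}, parallel to how one passes from definable to type-definable objects in the classical discrete setting.

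With the identification secured, the deduction is purely formal: apply Theorem~\ref{theorem: Main Theorem}(i) in the s-unital case to obtain $\RRaT_A + \RR\cdot\RRaT_A + \RR\cdot\RRaT_A = \RR^{00}_A$, which translates back to the topological equation in item (i); and apply Theorem~\ref{theorem: Main Theorem}(ii) in the positive characteristic case to get item (ii). The consequence $\RRiD_{\topo}=\RRiT_{\topo}=\RRiI_{\topo}$ is then immediate from the corresponding equality of the $A$-components. Thus the only genuine work is the translation between the two settings; the algebraic and model-theoretic heart of the argument is already contained in Theorem~\ref{theorem: Main Theorem}.
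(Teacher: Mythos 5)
There is a genuine gap, and it sits exactly where you place the ``only technical obstacle'': the claimed identification of the topological components with the model-theoretic components of a ring-plus-neighborhood-predicates expansion is not proved, and it is false in general. Recall how the paper defines these objects (Section~\ref{section: topological rings}, following \cite{GJK22}): $R$ is equipped with the \emph{full} structure and one sets $\RRaT_{\topo}=\RRaT_\emptyset+\mu$, $\RR^{00}_{\topo}=\RR^{00}_\emptyset+\mu$, where $\mu$ is the group of infinitesimals. These are not the $00$-components of the structure $(R,+,\cdot,\text{neighborhood predicates})$. Your ``inclusion by construction'' already fails: the intersection of the interpretations of the neighborhood predicates is $\mu$, which is typically of \emph{unbounded} index, so it is not a candidate subgroup, and $\RRaT_\emptyset+\mu$ need not be type-definable in your reduct at all, since $\RRaT_\emptyset$ is computed in the full structure. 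The reverse inclusion fails too, because the reduct misses the (arbitrary) subsets of $R$ that the full structure provides, so its components are in general strictly coarser. A clean test case is the discrete topology: there $\mu=\{0\}$, your expansion is essentially the pure ring, and your translation lemma would assert that, e.g., $(\bar\Z,+)^{00}$ computed in the ring language equals $(\bar\Z,+)^{00}_\emptyset$ computed in the full structure; this is false (the latter quotient is the full Bohr compactification of $\Z$, of cardinality $2^{\mathfrak{c}}$, while the former has bounded cardinality $\leq 2^{\aleph_0}$ over the countable language with parameters). The vague justification that translates and Boolean combinations of the neighborhood predicates ``determine every $A$-type-definable subgroup'' cannot be repaired: the sets shrinking $\RRaT_\emptyset$ in the full structure are simply not there. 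Note also that taking the full structure instead does not help, since then the model-theoretic component is $\RRaT_\emptyset$ itself, which does not contain $\mu$ in general (again visible for $R=\R$).

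The paper's actual proof (Theorem~\ref{theorem: topological case}) does not attempt any such transfer of components between structures. It works with the definition $\RRaT_{\topo}=\RRaT_\emptyset+\mu$ directly: one expands $\RRaT_{\topo}+(\RR\cdot\RRaT_{\topo})^{+2}$, applies the discrete result (Corollary~\ref{corollary: 3 steps}) over $\emptyset$ in the full structure to the $\RRaT_\emptyset$-terms, and --- this is the essential extra ingredient your argument has no substitute for --- uses Fact~\ref{fact: R_topo is an ideal} (that $\RR^{00}_\emptyset+\mu$ is a two-sided ideal, from \cite[Lemma 3.29]{GJK22}) to absorb the terms $\RR\cdot\mu$, which are \emph{not} contained in $\mu$. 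The equality $\RR^{00}_{\topo}=\RR^{0}_{\topo}$ comes from Corollary~\ref{corollary: R00_topo = R0_topo}, and the final statement $\RRiD_{\topo}=\RRiT_{\topo}=\RRiI_{\topo}$ additionally needs the $000$-analysis of Corollary~\ref{corollary: 0=00=000 for top. rings}: there $\RR^{000}_{\topo}$ is defined as $\RR^{000}_\emptyset+\langle(\RR\cup\{1\})\mu(\RR\cup\{1\})\rangle$, and one invokes \cite[Corollary 2.37]{HKP1} to know $\RRaT_{\topo}=(\RR,+)^{000}_{\topo}$ before adapting Facts~\ref{fact: occurrence of 000} and~\ref{fact: conditions equivalent to 000=00}; your ``transported verbatim'' step skips this entirely. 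To fix your write-up you would either have to prove a correct identification statement (which, as above, is not available in this generality) or redo the argument along the paper's lines, handling $\mu$ explicitly.
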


	This implies that $(\dagger \dagger)$ from Subsection 4.5 of \cite{GJK22} holds, and so one gets simplified formulas for the Bohr compactifications of topological groups $\UT_n(R)$ and $\T_n(R)$ over any unital, topological ring $R$.

	Coming back to the general situation when $R$ is any ring $\emptyset$-definable in $M$, a question arises whether for every type-definable, bounded index subgroup $H$ of $(\RR,+)$ the set $(\RR \cup \{1\}) \cdot H$ generates a group in finitely many steps. Theorems~\ref{theorem: Main Theorem} and \ref{theorem: main topological} tell us that for unital rings and for rings of positive characteristic it is true for $H\coloneqq \RRaT_A$ and $H\coloneqq \RRaT_{\topo}$ when $R$ is topological. In general, we give a negative answer
	(even for commutative, unital rings of positive characteristic)
	by finding an appropriate $H$ for $R \coloneqq \Z_2^\omega$ equipped with the full structure (see Corollary~\ref{corollary: infinitely many steps in in the power of Z2}). Using the same ring, we also show that it is not the case that there exists $n \in \omega$ such that for every ($\emptyset$-definable) finite index subgroup $H$ of $(R,+)$, the set $(R \cup \{1\}) \cdot H$ generates a group in $n$-steps (see Example~\ref{example: Z2omega}); in particular, in Theorem~\ref{theorem: main algebraic theorem}, one cannot strengthen the conclusion to saying that $H +R\cdot H$ is a two-sided (or just left) ideal.

	We said that Theorem~\ref{theorem: Main Theorem} will be deduced from our algebraic Theorem~\ref{theorem: main algebraic theorem}. But in Section~\ref{section: conditions}, we will see that the connection is much stronger, e.g.\ for rings of positive characteristic both theorems are easily seen to be equivalent. In Section~\ref{section: conditions}, we will distinguish more conditions closely related to both theorems, in particular
	(in Proposition~\ref{proposition: equivalence of finite numbers of steps}(2))
	we will show that Theorem~\ref{theorem: main algebraic theorem} is equivalent to the following version of Theorem~\ref{theorem: Main Theorem}.

	\begin{theorem}\label{theorem: Main Theorem for (R,+)0}
		Let $R$ be a $\emptyset$-definable ring. Then $(\RR,+)^{0}_A + \RR \cdot (\RR,+)^{0}_A = \RR^{0}_A$, where $(\RR,+)^{0}_A$ [resp. $\RR^0_A$] is the intersection of all $A$-definable, finite index additive subgroups [resp. ideals] of $\RR$.
	\end{theorem}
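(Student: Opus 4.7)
The plan is to deduce this from Theorem~\ref{theorem: main algebraic theorem} by a routine saturation argument in the monster $\bar M$. The easy inclusion $(\RR,+)^{0}_A + \RR \cdot (\RR,+)^{0}_A \subseteq \RR^{0}_A$ falls out of the definitions: every $A$-definable, finite index two-sided ideal $J$ of $\RR$ is in particular an $A$-definable, finite index additive subgroup of $(\RR,+)$, hence contains $(\RR,+)^{0}_A$, and since $\RR \cdot J \subseteq J$, also $\RR \cdot (\RR,+)^{0}_A \subseteq J$. Intersecting over all such $J$ yields the claim.

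For the reverse inclusion I would fix $a \in \RR^{0}_A$ and let $\mathcal H$ denote the filtered family of $A$-definable, finite index additive subgroups of $(\RR,+)$. For each $H \in \mathcal H$, Theorem~\ref{theorem: main algebraic theorem} produces an $A$-definable, finite index two-sided ideal $J_H \subseteq H + \RR \cdot H$; since $a \in \RR^{0}_A \subseteq J_H$, we can write $a = x_H + r_H z_H$ with $x_H, z_H \in H$ and $r_H \in \RR$. To upgrade these $H$-by-$H$ witnesses to a single witness valued in $(\RR,+)^{0}_A$, I would appeal to saturation of $\bar M$: the partial type
\[
	\Sigma(x, r, z) \;\coloneqq\; \{a = x + rz\} \;\cup\; \{x \in H,\; z \in H : H \in \mathcal H\}
\]
is finitely satisfiable in $\RR$, since any finite fragment of $\Sigma$ involves only some $H_1, \dots, H_n \in \mathcal H$ whose intersection $H^\ast$ again belongs to $\mathcal H$, and the triple $(x_{H^\ast}, r_{H^\ast}, z_{H^\ast})$ realizes that fragment. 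By saturation $\Sigma$ has a realization $(x, r, z)$ in $\RR$; then $x, z \in \bigcap_{H \in \mathcal H} H = (\RR,+)^{0}_A$, and the equality $a = x + rz$ exhibits $a$ as an element of $(\RR,+)^{0}_A + \RR \cdot (\RR,+)^{0}_A$.

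The substantive content lies entirely in Theorem~\ref{theorem: main algebraic theorem}; the present argument is a routine compactness passage from the definable to the type-definable level, using only that $\mathcal H$ is closed under finite intersections. In fact, the very same reasoning shows that $(\RR,+)^{0}_A + \RR \cdot (\RR,+)^{0}_A$ coincides with $\bigcap_{H \in \mathcal H}(H + \RR \cdot H)$, which incidentally makes the type-definability of the left-hand side transparent.
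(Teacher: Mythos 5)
Your argument is correct and is essentially the paper's own route: the paper obtains this statement as the implication from Theorem~\ref{theorem: main algebraic theorem} (condition \nref{condition:group_R00}{1\frac12}) to \nref{condition:X'_R0}{1\frac12} in Proposition~\ref{proposition: equivalence of finite numbers of steps}(2), namely ``by compactness'' $\RR^{0}_A\subseteq (\RR,+)^{0}_A+\RR\cdot(\RR,+)^{0}_A$, together with the obvious reverse containment. You have merely written out that compactness step explicitly as a finitely satisfiable partial type, which is fine.
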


	Theorem~\ref{theorem: main algebraic theorem} can be viewed as a purely algebraic restatement of Theorem~\ref{theorem: Main Theorem for (R,+)0}. In Section~\ref{section: final comments}, we will describe an algebraic restatement of Theorem~\ref{theorem: Main Theorem}, Corollary~\ref{corollary: 3 steps}, and Theorem~\ref{theorem: bound for fin gen. rings}, and, more generally, of a potential positive answer to Question~\ref{question: introduction, finite number of steps for all rings}.
	Roughly speaking, this restatement says that for an appropriate $n$, for any Bohr neighborhood $D$ in the group $(R,+)$ there exists a
	weak ``ring version of a Bourgain system'' in the $n$-fold sum $(R \cup\{1\}) \cdot D + \dots + (R \cup\{1\}) \cdot D$.

	An important consequence of Pontryagin duality is the fact that each unital, compact, Hausdorff topological ring is profinite. From this, one gets that for $R$ unital, $\RR^{00}_A = \RR^0_A$. The same is true for rings of positive characteristic (then also $\RRaT_A = (\RR,+)^{0}_A$) which is pointed out in Corollary~\ref{corollary: 0=00 in rings}. Note that in general it may happen that $\RR^{00}_A \ne \RR^0_A$, e.g.\ for $R$ being the ring of integers but with the zero multiplication, equipped with the full structure, we have $\RR^{00}_R =\RRaT_R \ne (\RR,+)^{0}_R = \RR^0_R$ (where the middle inequality follows from \cite[Corollary 2.7]{GJK22}).

	\subsection*{Structure of the paper}
	In the preliminaries, we recall the necessary definitions and useful facts, extending the context of some of them. In Section~\ref{section: conditions}, we distinguish several conditions concerning generating in a given, finite number of steps (similar to those in the conclusions of Theorems~\ref{theorem: main algebraic theorem} and~\ref{theorem: Main Theorem}) and describe the relationships between them which are used in further sections to obtain our main results and to study examples.
	In Section~\ref{section: main results}, we prove the main results: Theorems~\ref{theorem: main algebraic theorem} and~\ref{theorem: Main Theorem}. In Section~\ref{section: comm. fin. gen. rings}, we give simpler proofs of these results, but for commutative, finitely generated, unital rings, and with a weaker conclusion (more steps to generate a group). In Section~\ref{section: fin. gen. rings}, we extend Theorem~\ref{theorem: Main Theorem} to arbitrary finitely generated (not necessarily unital) rings, but again with more steps required to generate a group. In Section~\ref{section: topological rings}, we discuss model-theoretic connected components of topological rings and prove Theorem~\ref{theorem: main topological}. In Section~\ref{section: examples}, we analyze many examples, showing ``optimality'' of Theorems~\ref{theorem: main algebraic theorem} and~\ref{theorem: Main Theorem}(2), and yielding negative answers to
	some natural questions discussed in this introduction.
	In Section \ref{section: final comments}, we discuss purely algebraic restatements of our main results.

	\section{Preliminaries}\label{section: preliminaries}

	We are working in a model $M$ of a complete, first order theory $T$ whose monster model is denoted by $\bar M$. Recall that by a monster model we mean a $\kappa$-saturated and strongly $\kappa$-homogeneous model for a sufficiently large
	strong limit cardinal $\kappa$ (for our purposes, any one greater than $|T|$ will suffice).
	When we are talking about definable groups or rings, we mean that they are definable in $M$ or in $\bar M$;
	the reader may assume that $M$ is simply the group or ring in question (possibly with some additional structure) without much loss of generality.
	For a set $D$ definable in $M$, $\bar D$ will stand for its interpretation in $\bar M$. We will say that $D$ is equipped with the \emph{full structure} if all subsets of all finite Cartesian powers of $D$ are $\emptyset$-definable.
	Usually, $A$ will denote a fixed subset of $M$.\footnote{However, most of the results can be easily adapted to treat the case of an arbitrary small $A\subseteq \bar M$.}
	The term ``bounded'' will mean ``less than $\kappa$''.

	Let us recall all the relevant notions of connected components. Let $R$ be a $\emptyset$-definable group [resp. ring].

	\begin{itemize}
		\item ${\bar R}^0_A$ is the intersection of all $A$-definable, finite index subgroups [ideals] of $\bar R$.
		\item ${\bar R}^{00}_A$ is the smallest $A$-type-definable, bounded index subgroup [ideal] of $\bar R$.
		\item ${\bar R}^{000}_A$ is the smallest invariant over $A$, bounded index subgroup [ideal] of $\bar R$.
	\end{itemize}

	Note that we did not specify whether the ideals above are left, right, or two-sided. This is because of Proposition 3.6, Corollary 3.7, and Proposition 3.10 from \cite{GJK22} which tell us that

	\begin{fact}\label{fact: ideal component}
		The above components of the ring $\bar R$ do not depend on the choice of the version (left, right, or two-sided) of the ideals. Moreover, instead of ``ideal'' we can equivalently write ``subring'' in the above definitions.\footnote{It is easy to see (cf.\ for example \cite[Lemma 2.2(3)]{gis}) that the analogue for groups is also true, i.e.\ $\bar R^0_A, \bar R^{00}_A, \bar R^{000}_A$ are always normal subgroups of $\RR$, even if $R$ is a non-abelian group.}
	\end{fact}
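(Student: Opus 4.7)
The statement bundles several equivalences — left, right, and two-sided ideals yield the same component for each of $\RR^0_A$, $\RR^{00}_A$, $\RR^{000}_A$, and moreover ``ideal'' can be replaced by ``subring.'' I would organize the proof around two uniform constructions: a \emph{two-sided hull} of a one-sided ideal and an \emph{ideal core} of a subring. The easy direction, in each case, is that a smaller family of generators (two-sided ideals, respectively ideals) produces a larger intersection or minimum; so $\RR^{\bullet,\ell}_A \subseteq \RR^{\bullet,t}_A$ and $\RR^{\bullet,\mathrm{ring}}_A \subseteq \RR^{\bullet,\mathrm{ideal}}_A$ for each $\bullet \in \{0,00,000\}$. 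All the work lies in the reverse inclusions.

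For the reverse direction in the left/right/two-sided equivalence, I would first handle $\RR^{00}_A$ and $\RR^{000}_A$ together. Let $I_\ell$ denote the smallest $A$-type-definable (respectively, $A$-invariant) bounded-index left ideal, and form its two-sided hull
\[
I_\ell' \coloneqq \bigl\{x \in I_\ell : x\RR \subseteq I_\ell \text{ and } \RR x \subseteq I_\ell\bigr\} = I_\ell \cap \bigcap_{r \in \RR} \bigl(\rho_r^{-1}(I_\ell) \cap \lambda_r^{-1}(I_\ell)\bigr),
\]
where $\rho_r(x) \coloneqq xr$ and $\lambda_r(x) \coloneqq rx$. It is direct to verify that $I_\ell'$ is a two-sided ideal (left closure uses $(sx)\RR = s(x\RR) \subseteq sI_\ell \subseteq I_\ell$). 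Preservation of bounded index and the definability class under this intersection is the delicate step: one uses that $\rho_r^{-1}(I_\ell)$ depends only on $\mathrm{tp}(r/A)$, so the intersection is effectively indexed by the boundedly many types over $A$, each contributing a bounded-index preimage, together with the standard descent principle that an $A$-invariant partial type defined over larger parameters is $A$-type-definable. Minimality of $I_\ell$ then forces $I_\ell \subseteq I_\ell'$, so $I_\ell$ is already two-sided. For $\RR^0_A$, I would instead use the annihilator trick: given an $A$-definable, finite-index left ideal $L$, the set $K_L \coloneqq \{r \in \RR : r\RR \cup \RR r \subseteq L\}$ is $A$-definable (the quantifier ``for all $s \in \RR$'' is permitted) and is a two-sided ideal; it has finite index since it is the kernel of the additive map $\RR \to \mathrm{End}(\RR/L,+)^2$ sending $r$ to its left- and right-multiplication endomorphisms, whose target is finite. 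In the unital (or s-unital) setting, $K_L \subseteq L$ is automatic, yielding the equivalence.

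For the ``ideal versus subring'' equivalence, the plan is to produce, inside every bounded-index subring $S$ of the appropriate definability class, a bounded-index two-sided ideal of the same class. This is achieved by iterating the hull operation: set $S_0 \coloneqq S$ and $S_{n+1} \coloneqq \{x \in S_n : x\RR \cup \RR x \subseteq S_n\}$, and take $S_\omega \coloneqq \bigcap_{n<\omega} S_n$. Each $S_n$ remains a subring, the class and bounded index are preserved step by step by the same arguments as above, and $S_\omega$ is a two-sided ideal. The main obstacle throughout is the index-boundedness of the hull step: without $A$-invariance the intersection over all $r \in \RR$ of preimages $\rho_r^{-1}(\cdot)$ could blow up to unbounded index, but $A$-invariance of the starting set restricts the intersection to boundedly many types' worth of preimages, keeping the total within the bounded class and inside the relevant definability family.
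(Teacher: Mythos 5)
Your argument fails at its central step, the claim that the two-sided hull $I_\ell'=I_\ell\cap\bigcap_{r\in\RR}\bigl(\rho_r^{-1}(I_\ell)\cap\lambda_r^{-1}(I_\ell)\bigr)$ still has bounded index. You justify this by asserting that $\rho_r^{-1}(I_\ell)$ depends only on $\mathrm{tp}(r/A)$, but that is false: if $\sigma\in\Aut(\bar M/A)$ sends $r$ to $r'$, then $A$-invariance of $I_\ell$ only gives $\rho_{r'}^{-1}(I_\ell)=\sigma\bigl(\rho_r^{-1}(I_\ell)\bigr)$, so the fibers attached to the realizations of a single type over $A$ form a (typically unbounded) family of $A$-conjugate bounded-index subgroups, not one subgroup per type. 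Hence the hull really is an intersection of unboundedly many bounded-index subgroups, and such an intersection need not have bounded index; establishing that this particular one does is essentially the entire content of the fact (once boundedness of the hull is known, minimality of $I_\ell$ does finish instantly, as you say). The same unproved boundedness underlies your iterated-hull argument for replacing ``ideal'' by ``subring''. Note also that the paper does not prove this statement at all: it is imported from \cite{GJK22} (Propositions 3.6 and 3.10, Corollary 3.7), and the route used there --- visible in how those results are invoked later in this paper, e.g.\ in the proof of Proposition~\ref{proposition: equivalence of finite numbers of steps}(2) --- goes through stabilizers such as $\Stab_{\RR}(G)=\{r: rG\subseteq G\}$ having small index, not through unbounded intersections of fibers.

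The $\RR^0_A$ part has further, though more repairable, problems. For a left ideal $L$, right multiplication does not descend to $\RR/L$ (that would require $Lr\subseteq L$), so your map into $\mathrm{End}(\RR/L,+)^2$ is not well defined in its second coordinate; only the left-multiplication representation $\Phi\colon\RR\to \mathrm{End}(\RR/L,+)$ is available, and its kernel is $\{a: a\RR\subseteq L\}$. Moreover, to compare the components you need a finite-index two-sided ideal \emph{contained in} $L$, and you secure $K_L\subseteq L$ only in the (s-)unital case, whereas the fact is about arbitrary rings. The fix is to take $I\coloneqq\{x\in L: x\RR\subseteq L\}=L\cap\ker\Phi$: one checks directly that $I$ is an $A$-definable two-sided ideal of finite index contained in $L$ --- but as written your finite-index case is incomplete, and the subring case of $\RR^0_A$ is not covered by the kernel trick at all (neither multiplication descends modulo a mere subring); there one needs an extra idea, e.g.\ that $\{r: rS\subseteq S\}\supseteq S$ has finite index, so finitely many coset representatives show $\{x\in S:\RR x\subseteq S\}$ has finite index and is a left ideal, which is exactly the stabilizer route of \cite[Lemma 3.9]{GJK22}. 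For the $00$ and $000$ components these finiteness devices are unavailable, so a genuinely different argument is needed there, and your proposal does not supply one.
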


	Of course, ${\bar R}^{000}_A \leq {\bar R}^{00}_A \leq {\bar R}^0_A$. It is easy to see (cf. \cite[Lemma 2.2(1)]{gis}) that the indices of all these components in $\bar R$ are in fact bounded by $2^{|T|+|A|}$. If $R$ is a definable group [or ring] and $S$ is a type-definable, normal subgroup [ideal] in $\RR$ of bounded index, then $\RR/S$ is equipped with the \emph{logic topology}: closed sets are those whose preimages under the quotient map are type-definable. This makes the quotient $\RR/S$ a compact, Hausdorff topological group [ring]. Then $\RR/\RR^0_R$ is the universal definable, profinite compactification, and $\RR/\RR^{00}_R$ the universal definable compactification (so-called definable Bohr compactification) of the group [ring] $R$. (See \cite{GPP14} and \cite{GJK22} for definitions and more details; a nice, short discussion on definable Bohr compactifications of groups can be found in the preliminaries in \cite{GJK22}.) So for example the equality $\RRiT_R=\RRiD_R$ means precisely that both compactifications coincide. When $R$ is equipped with the full structure, we can
	erase
	the adjective ``definable'' and we get classical notions of compactification (described in a model-theoretic way).

	Since in this paper $R$ will denote a definable ring, we will write $\RRiD_A$, etc. to denote the components of $\RR$ as a ring, and $(\RR,+)^0_A$, etc. to denote the components of the additive group $(\RR,+)$.

	Recall that a definable group $G$ is said to be \emph{definably amenable} if there is a left invariant, finitely additive probability measure on the algebra of definable subsets of $G$. Each abelian group is amenable as a discrete group so also definably amenable.

	Recall the notion of a thick set from \cite[Definition 3.1]{gis}.
	For a positive integer $n$, a subset $D$ of a group is said to be \emph{$n$-thick} if it is symmetric and for any elements $g_0,\dots,g_{n-1}$ there exist $i<j<n$ with $g_i^{-1}g_j \in D$; it is called \emph{thick}, if it is $n$-thick} for some $n$. A subset $D$ of $G$ is called (left) \emph{generic} if finitely many (left) translates of it cover $G$. Each thick subset is clearly generic.

	\begin{fact}\label{fact: G00=G000}
		If $G$ is a definably amenable group (e.g.\ an
		abelian	group), then $\bar G^{00}_A=\bar G^{000}_A$.
	\end{fact}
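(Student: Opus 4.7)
The plan is to reduce the statement to showing that $H := \bar G^{000}_A$ is $A$-type-definable; the inclusion $\bar G^{000}_A \subseteq \bar G^{00}_A$ is immediate, since every $A$-type-definable, bounded-index subgroup is in particular $A$-invariant. Once $H$ is known to be $A$-type-definable of bounded index, it must contain $\bar G^{00}_A$ by the defining minimality of the latter, yielding equality.

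From definable amenability, I would extract a left-invariant, finitely additive probability measure $\mu$ on $A$-definable subsets of $\bar G$ (transferring the invariant measure from $G$ via $\mu(D(\bar M)) := \mu(D(M))$ for $M$-definable $D$, then extending in the standard way). The intermediate goal is to prove the stronger claim that every $A$-invariant, bounded-index subgroup of $\bar G$ contains an $A$-type-definable, bounded-index subgroup; applied to $H$, this gives $\bar G^{00}_A \leq H$.

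The key construction is the following. For a carefully chosen $A$-definable, left-generic set $D$, the ``near-stabilizer''
\[
S := \{g \in \bar G : \mu(gD \triangle D) = 0\}
\]
is $A$-type-definable of bounded index. By choosing $D$ to approximate a single coset of $H$ in the $\mu$-metric, one forces $S \subseteq H$, whence $\bar G^{00}_A \leq S \leq H$. The $A$-type-definability of $S$ follows from an expression of $\mu(gD \triangle D) = 0$ as a partial type; its bounded index follows from a pigeonhole/averaging argument using left-invariance of $\mu$.

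The main obstacle is the construction of $D$: producing an $A$-definable set that approximates a coset of the $A$-invariant (but a priori non type-definable) subgroup $H$ sufficiently well in $\mu$-measure. This is a F\o lner-type averaging step that uses amenability essentially (the result is known to fail without amenability), and constitutes the core technical content. In a full writeup I would invoke the stabilizer theorem for amenable groups (in the spirit of Massicot--Wagner's work on the Lascar topology of amenable groups) rather than reprove it from scratch, since the fact is well-established in the literature.
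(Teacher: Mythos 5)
There is a genuine gap, in two places. First, your key object $S=\{g\in\bar G:\mu(gD\mathbin{\triangle}D)=0\}$ is not known to be type-definable: without any tameness assumption (no NIP here), the lifted invariant measure $\mu$ on $\bar G$ need not be definable or even Borel-definable, so the condition $\mu(gD\mathbin{\triangle}D)=0$ cannot be written as a partial type; moreover, the \emph{exact} measure-zero stabilizer can be much smaller than any approximate stabilizer, and the pigeonhole argument you sketch only controls sets of the form $\{g:\mu(gD\cap D)>\delta\}$, not $S$, so bounded index is unclear as well. This is precisely why Massicot--Wagner do not produce the subgroup as a measure-theoretic stabilizer: their theorem yields, inside $X^4$ for a definable generic symmetric $X$, a descending chain of \emph{definable} generic sets $C_1\supseteq C_2\supseteq\dots$ with $C_{i+1}^2\subseteq C_i$, whose intersection is the type-definable bounded-index subgroup --- type-definability comes from definability of the $C_i$, not from expressing a measure condition as a type.

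Second, and more fundamentally, your bridge from the stabilizer theorem to $H=\bar G^{000}_A$ does not exist: $H$ is merely $A$-invariant of bounded index, so it is neither type-definable nor $\mu$-measurable, and there is no sense (and no F\o lner-type argument) in which a definable $D$ ``approximates a coset of $H$ in the $\mu$-metric''. The missing ingredient is Gismatullin's description of $\bar G^{000}_A$: it is generated by the intersection of all $A$-definable \emph{thick} subsets of $\bar G$. This is what lets definable data touch $H$ at all. The paper's proof runs: by that description and compactness it suffices to show $\bar G^{00}_A\subseteq\bar D^{8}$ for every $A$-definable thick $D$; Massicot--Wagner (via Claim 1 in the proof of \cite[Lemma 3.5]{KrPi}) gives a chain as above inside $D^4$, whence $\bar G^{00}_M\subseteq\bar D^4$; and finally \cite[Theorem 5.2]{Mas18} descends from the model $M$ to the parameter set $A$, giving $\bar G^{00}_A\subseteq\bar D^{8}$ (a step your outline also glosses over). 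So while you correctly identify Massicot--Wagner as the engine, the reduction you propose to feed into it would fail; replacing the coset-approximation step by the thick-set generation of $\bar G^{000}_A$ is essential.
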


	\begin{proof}
		For $A=M$ (where $G=G(M)$) this is \cite[Theorem 3.3]{KrPi}. For an arbitrary $A\subseteq M$ one can argue as follows. By Lemmas 2.2(2) and 3.3 of \cite{gis}, ${\bar G}^{000}_A$ is generated by the intersection of all $A$-definable, thick subsets of $\bar G$.
		So, by compactness, it is enough to show that for every $A$-definable, thick subset $D$ of $G$, ${\bar G}^{00}_A \subseteq \bar D^{8}$.
		Claim 1 in the proof of \cite[Lemma 3.5]{KrPi} (which is based on the main result of \cite{MaWa}) yields a descending sequence $D^4=C_1 \supseteq C_2 \supseteq \dots$ of generic, symmetric, $M$-definable subsets of $G$ such that $C_{i+1}^2 \subseteq C_i$ for all $i$. Then $\bigcap_i \bar C_i$ is an $M$-type-definable subgroup of $\bar G$ of bounded index and contained in $\bar D^4$. Therefore, ${\bar G}^{00}_M \subseteq \bar D^4$. Since $D$ is $A$-definable, by \cite[Theorem 5.2]{Mas18}, we conclude that ${\bar G}^{00}_A \subseteq \bar D^8$.
	\end{proof}

	Let $R$ be a definable ring.
	Using Fact~\ref{fact: G00=G000} together with a result of Newelski, the following two observations were made in \cite{GJK22} (see Lemmas 4.7 and 4.8 in there), whose proofs we recall for the reader's convenience.

	Put $X_\leftarrow \coloneqq (\bar R \cup \{1\}) \cdot \RRaT_A$, $X_\rightarrow\coloneqq \RRaT_A \cdot (\bar R \cup \{1\})$, $X_\leftrightarrow\coloneqq (\bar R \cup \{1\}) \cdot \RRaT_A \cdot (\bar R \cup \{1\})$.

	\begin{fact}\label{fact: occurrence of 000}
		The subgroup $J$ of $(\bar R, +)$ generated by
		$X_i$ is precisely $\RRiI_A$ for any $i \in \{\leftarrow,\rightarrow,\leftrightarrow\}$.
	\end{fact}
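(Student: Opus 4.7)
The plan is to prove the two inclusions $J \subseteq \RRiI_A$ and $\RRiI_A \subseteq J$ directly, uniformly for $i \in \{\leftarrow, \rightarrow, \leftrightarrow\}$, each direction relying on one of the facts quoted above.

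For $J \subseteq \RRiI_A$, I would first invoke Fact~\ref{fact: G00=G000} applied to the abelian (hence definably amenable) additive group $(\bar R, +)$ to conclude that $(\RR,+)^{00}_A = (\RR,+)^{000}_A$. This means $\RRaT_A$ is in fact the smallest $A$-invariant, bounded index additive subgroup of $\bar R$. Since the underlying additive group of $\RRiI_A$ is itself $A$-invariant and of bounded index, it must contain $\RRaT_A$. Because $\RRiI_A$ is a (two-sided) ideal, multiplication on either side by elements of $\bar R \cup \{1\}$ does not leave it, so each of $X_\leftarrow, X_\rightarrow, X_\leftrightarrow$ is contained in $\RRiI_A$. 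Taking the additive subgroup they generate yields $J \subseteq \RRiI_A$.

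For $\RRiI_A \subseteq J$, the plan is to verify that $J$ itself is an $A$-invariant, bounded index ideal of the appropriate side (left for $i = \leftarrow$, right for $i = \rightarrow$, two-sided for $i = \leftrightarrow$), and then apply Fact~\ref{fact: ideal component}, by which $\RRiI_A$ is the smallest such object independently of the chosen sidedness. Bounded index is immediate: setting the $\bar R \cup \{1\}$-factors equal to $1$ shows that $X_i$ already contains $\RRaT_A$, which has bounded index. $A$-invariance of $J$ is inherited from the $A$-invariance of the generating set $X_i$, which is built from $A$-invariant sets $\bar R$, $\{1\}$ and $\RRaT_A$. The ideal property is a one-line computation: for $i = \leftarrow$, $r \in \bar R$, $r' \in \bar R \cup \{1\}$ and $h \in \RRaT_A$, one has $r(r'h) = (rr')h \in X_\leftarrow$ since $rr' \in \bar R$; the cases $i = \rightarrow$ and $i = \leftrightarrow$ are entirely analogous.

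I do not expect a real obstacle here: the statement is essentially an unpacking of the definitions together with the two previously established facts. The only conceptual points worth highlighting are the role of Fact~\ref{fact: G00=G000}, which upgrades $\RRaT_A$ from the smallest $A$-type-definable bounded index additive subgroup to the smallest $A$-invariant such subgroup and thereby places it inside $\RRiI_A$, and the role of Fact~\ref{fact: ideal component}, which allows us to match the ``sidedness'' of $J$ (forced on us by the chosen $X_i$) with the appropriate characterization of $\RRiI_A$.
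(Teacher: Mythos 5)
Your proposal is correct and follows essentially the same route as the paper: both directions are obtained by noting $\RRaT_A = (\RR,+)^{000}_A \subseteq \RRiI_A$ via Fact~\ref{fact: G00=G000}, using that $\RRiI_A$ is (by Fact~\ref{fact: ideal component}) a two-sided ideal to get $J \subseteq \RRiI_A$, and then checking that $J$ is an $A$-invariant, bounded index left/right/two-sided ideal so that Fact~\ref{fact: ideal component} gives the reverse inclusion. No gaps.
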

	\begin{proof}
		As $\RRa$ is abelian, $\RRaT_A = \RRaI_A \subseteq \RRiI_A$ by Fact~\ref{fact: G00=G000}. Hence, $J$ is contained in $\RRiI_A$ by Fact~\ref{fact: ideal component}. On the other hand, $J$ is an $A$-invariant left [or right or two-sided] ideal which contains $\RRaT_A$ and so has bounded index, hence it must contain $\RRiI_A$ by Fact~\ref{fact: ideal component}.
	\end{proof}

	\begin{fact}\label{fact: conditions equivalent to 000=00}
		For any $i \in \{\leftarrow,\rightarrow,\leftrightarrow\}$, the following conditions are equivalent.
		\begin{enumerate}[label=(\roman*)]
			\item $\RRiI_A$ is type-definable.
			\item $\RRiI_A$ is generated by $X_i$ in finitely many steps.
			\item $\RRiI_A = \RRiT_A$.
		\end{enumerate}
	\end{fact}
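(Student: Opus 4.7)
My plan is to establish the three-way equivalence via the implications $(iii) \Rightarrow (i)$, $(i) \Rightarrow (iii)$, $(ii) \Rightarrow (i)$, and $(i) \Rightarrow (ii)$. Throughout I use, via Fact~\ref{fact: occurrence of 000} and the symmetry of $X_i$ (since $\RRaT_A$ is an additive subgroup), that $\RRiI_A = \bigcup_{n \geq 1} n X_i$ with $n X_i := X_i + \cdots + X_i$, so (ii) simply asserts $\RRiI_A = nX_i$ for some $n$.

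The implication $(iii) \Rightarrow (i)$ is immediate since $\RRiT_A$ is type-definable by definition. For $(i) \Rightarrow (iii)$, I would first invoke the standard fact that any $A$-invariant type-definable set is type-definable over $A$ (obtained by projecting a type space $S(B) \to S(A)$ along the restriction map and using that continuous images of compact sets are closed in Hausdorff spaces); then $\RRiI_A$ becomes an $A$-type-definable, bounded-index ideal, so minimality of $\RRiT_A$ forces $\RRiT_A \subseteq \RRiI_A$, and the automatic reverse inclusion (by minimality of $\RRiI_A$ among $A$-invariant bounded-index ideals) yields (iii).

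For $(ii) \Rightarrow (i)$ the plan is to show that each $nX_i$ is already type-definable over $A$ by writing
\[
 nX_i = \bigcap_D Y_n(D),
\]
where $D$ ranges over $A$-definable sets containing $\RRaT_A$ and $Y_n(D)$ is the $A$-definable set obtained from the explicit description of $nX_i$ by replacing every occurrence of $\RRaT_A$ by $D$ (e.g.\ $Y_n(D) = \sum_{j=1}^{n}(\RR \cup \{1\}) \cdot D$ when $i = \leftarrow$). The inclusion ``$\subseteq$'' is immediate; for ``$\supseteq$'', if $a$ lies in every such $Y_n(D)$, then the partial type over $A \cup \{a\}$ asserting the existence of witnesses $c_j \in \RR \cup \{1\}$ and $b_j$ realizing the defining partial type of $\RRaT_A$ with $a = \sum c_j b_j$ (or the analogous expression for $i \in \{\rightarrow,\leftrightarrow\}$) is finitely satisfiable, since for any finite collection of formulas from the defining type of $\RRaT_A$ their conjunction defines some such $D$, and hence is realized by $\kappa$-saturation.

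The substantive step is $(i) \Rightarrow (ii)$, where I plan to work in the compact Hausdorff abelian quotient $K := \RR/\RRaT_A$ (well defined since $(\RR,+)$ is abelian and $\RRaT_A$ is type-definable of bounded index). Under (i), each $nX_i$ is type-definable (by the previous paragraph) and contains $\RRaT_A$, so its image $\pi(nX_i) \subseteq K$ is closed; likewise $\RRiI_A / \RRaT_A$ is a closed subgroup of $K$, equal to $\bigcup_n \pi(nX_i)$. Baire category in this compact Hausdorff space produces some $n_0$ with $\pi(n_0 X_i)$ of nonempty relative interior in $\RRiI_A/\RRaT_A$; subtracting (using symmetry), $\pi(2n_0 X_i)$ contains an open neighborhood $U$ of $0$. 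Compactness of $\RRiI_A/\RRaT_A$ then gives a finite subcover $x_1+U,\dots,x_k+U$; writing $x_j \in \pi(m_j X_i)$ and setting $M := 2n_0 + \max_j m_j$, one has $\RRiI_A/\RRaT_A = \pi(MX_i)$, whence pulling back through $\pi$ and using $\RRaT_A \subseteq X_i$ yields $\RRiI_A = (M+1) X_i$, i.e.\ (ii). The main obstacle is precisely this implication: there is no purely algebraic reason for the ascending union $\bigcup_n nX_i$ to stabilize, and the finite bound must be extracted from type-definability by passing to the compact topological group $K$ and combining the Baire category theorem with a compactness-of-open-cover argument.
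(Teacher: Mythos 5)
Your proof is correct, but it takes a different route on the only substantive point. The paper's own proof is essentially a citation: (ii) $\Rightarrow$ (i) and (i) $\Leftrightarrow$ (iii) are noted as trivial (exactly the content of your first three paragraphs, including the standard facts that $A$-invariant type-definable sets are type-definable over $A$ and that $nX_i$ is type-definable by saturation), while (i) $\Rightarrow$ (ii) is delegated to Theorem 3.1 of \cite{Ne03} (Newelski's theorem on generating type-definable groups in finitely many steps) together with Fact~\ref{fact: occurrence of 000}. You instead prove (i) $\Rightarrow$ (ii) from scratch: since $\RRaT_A$ is a type-definable, bounded index subgroup of $(\RR,+)$ contained in $X_i$, you pass to the compact Hausdorff quotient $K=\RR/\RRaT_A$ with the logic topology, note that the images of the sets $nX_i$ under the quotient map $\pi$ are closed, and then combine Baire category with a finite subcover by translates, pulling back and absorbing $\RRaT_A$ into one extra copy of $X_i$ to get $\RRiI_A=(M+1)X_i$. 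This is in effect a self-contained proof of the instance of Newelski's theorem the paper needs, and it runs smoothly precisely because the generating set $X_i$ contains a type-definable subgroup of bounded index, a feature not available in the general setting of \cite{Ne03}; so your route buys self-containedness, while the citation buys brevity and the more general statement. Two cosmetic points: closedness of $\pi(nX_i)$ should be justified by observing that its $\pi$-preimage $nX_i+\RRaT_A$ is type-definable (merely containing $\RRaT_A$ is not the reason), and for non-unital $R$ the formal symbol $1$ in $X_i$ is handled by writing $X_i$ as a finite union such as $\RRaT_A\cup\RR\cdot\RRaT_A$, which your projection/finite-satisfiability argument accommodates without change.
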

	\begin{proof}
		The implication (i) $\rightarrow$ (ii) follows from Theorem 3.1 of \cite{Ne03} (and Fact~\ref{fact: occurrence of 000}); and (ii) $\rightarrow$ (i) is trivial. The equivalence (i) $\leftrightarrow$ (iii) is trivial by the definitions of $\RRiI_A$ and $\RRiT_A$.
	\end{proof}

	As a corollary we see that the statements ``$\RRiI_A$ is generated by $X_i$ in finitely many steps'' are equivalent for $i \in \{\leftarrow,\rightarrow,\leftrightarrow\}$.

	Question 4.9 of \cite{GJK22} asks whether these equivalent conditions hold for all unital rings.
	As discussed in the introduction, we will answer this question in the affirmative, showing that 3 (and for positive characteristic even 2) steps are enough to generate $\RRiI_A$.

	The next fact was observed in \cite[Proposition 3.6]{GJK22}.
	\begin{fact}\label{fact: 0=00 in rings}
		If $R$ is unital, then $\RRiT_A =\RRiD_A$.
	\end{fact}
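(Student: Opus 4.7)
The inclusion $\RRiT_A \subseteq \RRiD_A$ is immediate: $\RRiD_A$ is the intersection of a bounded family (of size at most $2^{|T|+|A|}$) of $A$-definable finite index ideals of $\RR$, hence is itself an $A$-type-definable ideal of bounded index, and by minimality of $\RRiT_A$ we are done. So the work is in establishing the reverse containment $\RRiD_A \subseteq \RRiT_A$.

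For this, the plan is to pass to the quotient $S \coloneqq \RR/\RRiT_A$, which under the logic topology is a compact Hausdorff topological ring, and which is unital because $R$ is (the image of $1$ serves as an identity). Now I would invoke the classical structure theorem, a consequence of Pontryagin duality for the additive group, stating that every compact Hausdorff topological ring with identity is profinite; equivalently, $0$ has a neighborhood basis consisting of open two-sided ideals of finite index, and the intersection of all such open ideals is $\{0\}$.

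With this in hand, let $I \leq S$ be any open finite-index two-sided ideal. Being open and of finite index, $I$ is also closed (as the complement is a finite union of cosets, each open). Pulling back along the quotient map $\pi \colon \RR \to S$, both $\pi^{-1}[I]$ and each of its finitely many complementary cosets are closed in the logic topology, and hence type-definable over $A$. By saturation (the standard argument that a set which is both type-definable and has type-definable complement is definable), $\pi^{-1}[I]$ is an $A$-definable finite index ideal of $\RR$ containing $\RRiT_A$. As $I$ ranges over a neighborhood basis of $0$ in $S$, these preimages intersect to $\RRiT_A$, and since each contains $\RRiD_A$, we conclude $\RRiD_A \subseteq \RRiT_A$.

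The only non-trivial ingredient is the external algebraic fact that compact Hausdorff unital rings are profinite; the rest is a routine translation between the logic topology on $\RR$ and the quotient topology on $S$. This is exactly the use of Pontryagin duality flagged in the introduction.
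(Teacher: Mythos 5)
Your strategy is the same as the one behind the paper's citation of \cite{GJK22}: pass to the compact Hausdorff ring $S=\RR/\RRiT_A$, invoke the Pontryagin-duality fact that a compact Hausdorff unital ring is profinite, and pull back open finite-index ideals. The easy inclusion and the topological part are fine. The gap is the assertion that $\pi^{-1}[I]$ and its complementary cosets are ``type-definable over $A$'', and hence that $\pi^{-1}[I]$ is an \emph{$A$-definable} finite index ideal. The logic topology only guarantees that the preimage of a closed set is type-definable over \emph{some} small parameter set; it is type-definable over $A$ only if it is $\Aut(\bar M/A)$-invariant, and the preimage of an arbitrary open finite-index ideal of $S$ need not be $A$-invariant (automorphisms over $A$ act on $S$ and permute its open ideals; this action is trivial when $A$ is a model, or when $R$ carries the full structure, but the fact is stated for an arbitrary $A\subseteq M$). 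For the additive-group analogue of the phenomenon, take $V=\FF{2}^{(\omega)}$ with predicates for two index-$2$ subgroups that are interchanged by an automorphism: each pulls back to a definable, finite index subgroup containing $(\bar V,+)^{00}_\emptyset$ which is not $\emptyset$-definable. Since your last step needs each pullback to contain $\RRiD_A$, and $\RRiD_A$ is by definition only below the \emph{$A$-definable} finite index ideals, this step is not justified as written.

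The gap is repairable without changing the route. Given an open finite-index ideal $I\leq S$, the set $H\coloneqq\pi^{-1}[I]$ is (as you argue) a finite index ideal of $\RR$, definable over some parameters, containing $\RRiT_A$. Write $\RRiT_A$ as a downward-directed small intersection of $A$-definable symmetric sets; by saturation one of them, say $D\ni 0$, satisfies $\RRiT_A\subseteq D\subseteq H$. The two-sided ideal generated by $D$ is contained in $H$ (as $H$ is an ideal), is an increasing union of $A$-definable sets, and has bounded index since it contains $\RRiT_A$; by compactness (the argument of Lemma~\ref{lemma: generation in finitely many steps by generic}, or Newelski's theorem as in Fact~\ref{fact: conditions equivalent to 000=00}) it equals one of its finite stages, so it is an $A$-definable, finite index ideal contained in $H$. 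Hence $\RRiD_A\subseteq H$ after all, and intersecting over a neighborhood basis of open ideals of the profinite ring $S$ yields $\RRiD_A\subseteq\RRiT_A$. With this insertion your argument is complete and agrees with the intended proof.
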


	The above fact is based on \cite[Proposition 5.1.2]{RZ} which says that each compact, Hausdorff, unital ring is profinite, the proof of which goes through when instead of the existence of 1 one assumes that for every $c \in R$ there exists $r_c \in R$ such that $r_c c=c$.

	However, we can say more.

	\begin{definition}\label{definition: s-unital}
		Let $R$ be a ring. We say that it is \emph{left s-unital} if for every $r\in R$ we have $r\in Rr$, \emph{right s-unital} if for every $r\in R$ we have $r\in rR$, and \emph{s-unital} if it is both left and right s-unital.
	\end{definition}

	\begin{example}
		Each unital ring is s-unital. The converse is false: ${\mathbb Z}_2^{\oplus \omega}$ is s-unital but not unital. Furthermore, the ring $C_c(\R)$ of compactly supported, continuous functions on the real line is s-unital, but it has no nonzero idempotents.
	\end{example}

	\begin{definition}
		If $R$ is a ring and $r\in R$ is nonzero, then we say that $r$ is a \emph{total left zero divisor} if $rR=0$, and we say that $r$ is a \emph{total right zero divisor} if $Rr=0$.
	\end{definition}

	Note that in a left s-unital ring, there are no total right zero divisors, and in a right s-unital ring, there are no total left zero divisors.

	\begin{fact}
		\label{fact:anzai}
		If $R$ is a compact, Hausdorff ring which has no total left zero divisors or no total right zero divisors, then $R$ is a profinite ring.
	\end{fact}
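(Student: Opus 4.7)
The plan is to adapt the proof of the unital case (Anzai's theorem, \cite[Proposition 5.1.2]{RZ}), with the no-zero-divisor hypothesis taking over the role played there by the existence of an identity. Let $C$ denote the connected component of $0$ in $(R,+)$. Since left and right multiplications by any element of $R$ are continuous additive endomorphisms fixing $0$, they send $C$ into $C$; thus $C$ is a closed two-sided ideal of $R$, and $R/C$ is compact, Hausdorff and totally disconnected, hence profinite. It therefore suffices to show that $C = 0$.

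As a compact, connected, Hausdorff abelian topological group, $(C,+)$ is divisible (equal to $\bigcap_{n \geq 1} n R$), and by Pontryagin duality its dual $\widehat{C}$ is discrete and torsion-free. Without loss of generality assume $R$ has no total right zero divisors --- the other case is symmetric --- so that $Rc = 0$ forces $c = 0$. The key step is to show $R \cdot C = 0$: granting this, every $c \in C$ satisfies $Rc = 0$ and hence $c = 0$ by the hypothesis, giving $C = 0$.

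For this, consider the continuous ring homomorphism $\Phi \colon R \to \mathrm{End}(C,+)$ given by left multiplication; the aim is to argue that $\Phi \equiv 0$. The strategy uses Pontryagin duality to identify $\mathrm{End}(C,+)$ with $\mathrm{End}(\widehat{C})$, which is torsion-free since $\widehat{C}$ is. Compactness of $\Phi(R)$ combined with the structure of the induced continuous action on the discrete group $\widehat{C}$ should force orbits on $\widehat{C}$ to be compact, hence finite, subgroups of a torsion-free group, so trivial, yielding $\Phi = 0$. The main obstacle will be making this last reduction rigorous: the pointwise-convergence topology on $\mathrm{End}(C,+)$ under which $\Phi$ is continuous does not straightforwardly match the discrete topology on $\widehat{C}$, and a careful interplay between these two topologies --- along the lines of the tube-lemma arguments used in the unital proof of \cite[Proposition 5.1.2]{RZ} --- will be required. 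The no-total-zero-divisor hypothesis plays the role here that the existence of a (left or local) identity plays in the unital case and its s-unital extension noted after Fact~\ref{fact: 0=00 in rings}, allowing the absence of degenerate elements that would otherwise obstruct the argument (as they do, for example, for $R = \mathbb{T}$ with trivial multiplication).
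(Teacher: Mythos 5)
The paper does not prove this fact at all --- it simply cites \cite[Theorem 3]{Anz43} --- so the real question is whether your sketch is a complete substitute, and it is not: the decisive step, $R\cdot C=0$ (where $C$ is the connected component of $0$), is only announced as a ``strategy''. You yourself flag the unresolved point (matching the pointwise topology on $\mathrm{End}(C,+)$ with the discrete topology on $\widehat{C}$), and the heuristic you offer in its place is incorrect as stated: orbits of the action of $R$ on $\widehat{C}$ are not subgroups, so ``compact, hence finite, subgroups of a torsion-free group, so trivial'' does not apply to them, and finiteness of an orbit alone gives nothing. So as written the proposal reduces the theorem to exactly the lemma that carries all the content, and then stops.

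The gap is fillable, and the correct mechanism is close to what you gesture at, but it runs through homomorphic images rather than orbits. Fix a character $\chi\in\widehat{C}$ and consider the map $r\mapsto \psi_r$, where $\psi_r(c)=\chi(rc)$ (this makes sense since $C$ is an ideal). Additivity of multiplication gives $\psi_{r+r'}=\psi_r\psi_{r'}$, so $r\mapsto\psi_r$ is a group homomorphism $(R,+)\to\widehat{C}$. It is continuous into $\widehat{C}$ with the compact--open topology: by joint continuity of multiplication and compactness of $C$ (the tube-lemma step), for $r$ near $r_0$ the character $\psi_r\psi_{r_0}^{-1}$ maps $C$ into a neighborhood of $1$ in $S^1$ containing no nontrivial subgroup, whence $\psi_r=\psi_{r_0}$; so the map is locally constant. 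Since $C$ is compact, $\widehat{C}$ is discrete, and since $(R,+)$ is compact, the image $\{\psi_r: r\in R\}$ is a \emph{finite subgroup} of $\widehat{C}$ (here ``subgroup'' is legitimate because it is the image of a homomorphism). As $C$ is connected, $\widehat{C}$ is torsion-free, so this finite subgroup is trivial; thus $\chi(rc)=0$ for all $r\in R$, $c\in C$, and all $\chi$, giving $R\cdot C=0$ (and symmetrically $C\cdot R=0$). From there your conclusion is fine: absence of total right (or left) zero divisors forces $C=0$, and $R=R/C$ is compact, Hausdorff and totally disconnected, hence profinite by the standard fact you invoke. Note also that $R\cdot C=0$ holds in \emph{every} compact Hausdorff ring; the zero-divisor hypothesis enters only at the very last step, not in the duality argument itself.
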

	\begin{proof}
		This is \cite[Theorem 3]{Anz43}.
	\end{proof}

	The next corollary is a generalization of Fact~\ref{fact: 0=00 in rings}.
	\begin{corollary}\label{corollary: 0=00 in rings}
		If $R$ is either of positive characteristic or unital or just left [or right] s-unital, then $\RRiT_A =\RRiD_A$.
	\end{corollary}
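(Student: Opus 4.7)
The plan is to reduce all three hypotheses to a single statement: the compact Hausdorff quotient ring $\RR/\RRiT_A$ (equipped with the logic topology) is profinite. Granting this, $\RRiT_A = \RRiD_A$ follows cleanly. The inclusion $\RRiT_A \subseteq \RRiD_A$ is immediate, since every $A$-definable finite-index ideal is $A$-type-definable of bounded index. For the reverse, a finite-index $A$-type-definable ideal of $\RR$ is in fact $A$-definable, its complement being a finite union of $A$-type-definable cosets and hence itself $A$-type-definable; so the $A$-definable finite-index ideals of $\RR$ containing $\RRiT_A$ correspond bijectively with the open finite-index ideals of $\RR/\RRiT_A$. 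Profiniteness of that quotient makes their intersection trivial, yielding $\RRiD_A \subseteq \RRiT_A$.

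For the s-unital case (which subsumes the unital one), I would note that left s-unitality is expressible by the first-order sentence $\forall r\,\exists s\,(sr = r)$, so it passes from $R$ to $\RR$ and descends to the quotient $\RR/\RRiT_A$. Since $r \in Rr$ forces any total right zero divisor to vanish, $\RR/\RRiT_A$ has no nonzero total right zero divisors, and Fact~\ref{fact:anzai} gives profiniteness. The right s-unital case is symmetric.

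For positive characteristic $n$, the quotient $\RR/\RRiT_A$ inherits characteristic dividing $n$, so its additive group is a compact Hausdorff abelian group of bounded exponent. By Pontryagin duality its dual is a discrete abelian group of bounded exponent, which is the direct limit of its finitely generated (and hence finite) subgroups; dualising back, $(\RR/\RRiT_A,+)$ is an inverse limit of finite abelian groups, in particular totally disconnected. It is then standard (essentially Kaplansky's theorem on compact totally disconnected rings) that a compact Hausdorff totally disconnected topological ring has a neighbourhood basis of $0$ consisting of clopen two-sided ideals, and is therefore profinite as a topological ring.

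The main obstacle is the positive-characteristic branch: Fact~\ref{fact:anzai} is not directly available, because a ring of characteristic $n$ can consist entirely of total zero divisors (for instance $(\Z/n\Z)^{\omega}$ with zero multiplication). One must therefore extract profiniteness from the additive structure alone, via Pontryagin duality, and then promote it back to the ring level using the topological fact that total disconnectedness together with compactness forces a basis of open two-sided ideals at the origin.
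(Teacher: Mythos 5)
Your reduction of all three cases to ``$\RR/\RRiT_A$ is profinite'' is exactly the equivalence the paper invokes (citing the proof of \cite[Proposition 3.6]{GJK22}), but the self-contained justification you give for the direction you actually need is flawed. The preimage of an open finite-index ideal of $\RR/\RRiT_A$ is, by the definition of the logic topology, type-definable and co-type-definable only over \emph{some} small parameter set, hence relatively definable over that set; nothing makes it $\Aut(\bar M/A)$-invariant, so your observation ``finite-index $A$-type-definable $\Rightarrow$ $A$-definable'' does not apply to it, and the claimed bijection between $A$-definable finite-index ideals containing $\RRiT_A$ and open finite-index ideals of the quotient is false in general. Already for the pure ring $\FF{2}\times\FF{2}$ with $A=\emptyset$ one has $\RRiT_\emptyset=\RRiD_\emptyset=\{0\}$, so the quotient is the ring itself, and its two coordinate-projection kernels are open finite-index ideals which are swapped by the coordinate-flip automorphism and hence are not of the form $I/\RRiT_\emptyset$ for any $\emptyset$-definable ideal $I$ (the only $\emptyset$-definable ideals are $0$ and the whole ring). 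In that toy example the corollary of course still holds, but it shows that ``profiniteness of the quotient makes the intersection of the $A$-definable finite-index ideals trivial'' is not a formal consequence of your correspondence: the passage from openness downstairs to $A$-definability upstairs is the real content of the cited fact, so you should either quote \cite[Proposition 3.6]{GJK22} for the equivalence, as the paper does, or supply a genuine argument for ``quotient profinite $\Rightarrow \RRiD_A\subseteq\RRiT_A$''.

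Modulo that step, your proof is correct, and it is worth comparing routes. The s-unital branch is the paper's argument verbatim: s-unitality is first order, passes to $\RR$ and to every quotient, rules out total zero divisors on the appropriate side, and Fact~\ref{fact:anzai} gives profiniteness. For positive characteristic the paper's official proof is different: it adjoins a sort for $\Z/m\Z$, forms the unitization $S=R\oplus\Z/m\Z$, identifies $\bar S^{00}_A$ and $\bar S^{0}_A$ with $\RRiT_A\times\{0\}$ and $\RRiD_A\times\{0\}$, and quotes the unital case. Your route---the additive group of $\RR/\RRiT_A$ is compact Hausdorff of finite exponent, hence profinite by Pontryagin duality, and a compact Hausdorff totally disconnected ring is profinite as a ring (Kaplansky)---is a valid alternative; in fact it is essentially the remark the paper makes right after the corollary, where additive profiniteness is obtained from Corollary~\ref{corollary: finite exponent implies 00=0} and the promotion to a profinite ring is taken as standard. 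Your point that Fact~\ref{fact:anzai} is unavailable in positive characteristic (e.g.\ rings with zero multiplication) is precisely why some such detour, unitization or duality, is needed there.
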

	\begin{proof}
		As mentioned before, $\RRiT_A=\RRiD_A$ if and only if $\RR/\RRiT_A$ is profinite (e.g.\ see the proof of \cite[Proposition 3.6]{GJK22}).
		If $R$ is left s-unital or right s-unital, then clearly so is $\RR$ and its every quotient, including $\RR/\RRiT_A$. Since a left [right] s-unital ring cannot have right [left] total zero divisors, it follows by Fact~\ref{fact:anzai} that $\RR/\RRiT_A$ is profinite, so we are done.

		If $R$ is of characteristic $m>0$, then expand $M$ by the additional sort for the finite ring $\Z/m\Z$ with named elements and its natural action on $R$. Then the induced structure on $M$ is interdefinable with the original one (without parameters). Consider the unitization $S\coloneqq R\oplus \Z/m\Z$ of $R$; this is a $\emptyset$-definable ring. It is clear that ${\bar S}^{00}_A = \RRiT_A \times \{0\}$ and ${\bar S}^{0}_A = \RRiD_A \times \{0\}$. So it is enough to show that ${\bar S}^{00}_A = {\bar S}^{0}_A$, but this follows from the first part, as $S$ is unital.
	\end{proof}

	As mentioned before, by \cite{GPP14}, we know that if $G$ is a (discrete) group considered with the full structure, then the (classical) Bohr compactification of $G$ coincides with $\G/\G^{00}_G=\G/\G^{00}_\emptyset$.
	If $G$ lives in a model $M$ equipped with the full structure, then this Bohr compactification is also $\G/\G^{00}_M$.

	The next well-known result is based on Pontryagin duality. For a proof see e.g.\ \cite[Fact 2.6]{GJK22}.

	\begin{fact}
		\label{fact:bohrIfOnly}
		Assume $G$ is a discrete abelian group. Then its Bohr compactification is
		profinite if and only if $G$ has finite exponent.
	\end{fact}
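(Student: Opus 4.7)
The plan is to reduce the statement to a fact about Pontryagin duals. Since $G$ is discrete abelian, its character group $\widehat{G}$ is compact Hausdorff abelian, and the continuous characters of the Bohr compactification $bG$ coincide with the characters of $G$, so that, as abstract groups, $\widehat{bG} \cong \widehat{G}$. A compact Hausdorff abelian group is profinite if and only if its Pontryagin dual is a torsion discrete abelian group; hence $bG$ is profinite if and only if $\widehat{G}$ is torsion, and the problem reduces to showing that $\widehat{G}$ is torsion if and only if $G$ has finite exponent.

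The implication ``finite exponent $\Rightarrow$ torsion dual'' is immediate: if $G$ has exponent $n$, then every character of $G$ takes values in the $n$-th roots of unity, so $\chi^n = 1$ for all $\chi \in \widehat{G}$, and $\widehat{G}$ itself has exponent dividing $n$.

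For the converse, suppose $\widehat{G}$ is torsion. The key step is to upgrade torsion to \emph{bounded exponent} by a Baire category argument applied to the compact Hausdorff group $\widehat{G}$: the $n$-torsion subgroups $\widehat{G}[n] \coloneqq \{\chi \in \widehat{G} : \chi^n = 1\}$ are closed and cover $\widehat{G}$, so by Baire's theorem some $\widehat{G}[n]$ has nonempty interior, hence is open in $\widehat{G}$ and of finite index, say $m$. Since $\widehat{G}$ is abelian, every element of $\widehat{G}$ then has order dividing $mn$, so $\widehat{G}$ has finite exponent $N \coloneqq mn$. This means $\chi(g)^N = 1$ for every character $\chi$ and every $g \in G$; since the characters of a discrete abelian group separate its points (Pontryagin duality once more), it follows that $Ng = 0$ for all $g \in G$, i.e.\ $G$ has exponent dividing $N$. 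The only delicate point of the plan is this passage from torsion to bounded exponent in $\widehat{G}$, and it is handled cleanly by the Baire argument above.
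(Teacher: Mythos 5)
Your proof is correct, and it is essentially the argument the paper has in mind: the paper does not prove this fact itself but cites \cite[Fact 2.6]{GJK22}, noting it rests on Pontryagin duality, which is exactly the route you take (profinite $\Leftrightarrow$ torsion dual, plus the standard Baire-category upgrade from torsion to bounded exponent in the compact dual $\widehat{G}$).
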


	\begin{corollary}\label{corollary: finite exponent implies 00=0}
		If $G$ is a group of finite exponent defined in an arbitrary structure, then $\bar G/{\bar G}^{00}_A$ is profinite; equivalently, ${\bar G}^{00}_A = {\bar G}^{0}_A$.
	\end{corollary}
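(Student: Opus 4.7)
The plan is to view $K := \bar G/\bar G^{00}_A$, equipped with the logic topology, as a compact Hausdorff topological group (abelian in the relevant case here, e.g.\ when $G$ is the additive group of a ring), and to deduce its profiniteness from Fact~\ref{fact:bohrIfOnly}. First, having finite exponent $n$ is a first-order property, so the hypothesis on $G$ transfers to $\bar G$ and hence to its quotient $K$, which therefore has exponent dividing $n$.

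Next, I would apply Fact~\ref{fact:bohrIfOnly} not to $G$ itself but to $K$ regarded as a \emph{discrete} abelian group of exponent $n$: this gives that the classical Bohr compactification $b(K^{\mathrm{disc}})$ is profinite. The continuous (trivially, since the domain is discrete) homomorphism $K^{\mathrm{disc}}\to K$ into a compact Hausdorff group factors through a continuous homomorphism $b(K^{\mathrm{disc}})\to K$ by universality of the Bohr compactification. Its image is a closed subgroup of $K$ containing (the image of) all of $K$, hence the map is surjective. Thus $K$ is a continuous quotient of a profinite group, and such a quotient is again profinite (a quotient of a compact, Hausdorff, totally disconnected group is totally disconnected). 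This establishes the first part.

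For the equivalence with $\bar G^{00}_A = \bar G^0_A$, I would use that $\bar G/\bar G^0_A$ is always profinite (as an inverse limit of the finite quotients $\bar G/H$ for $A$-definable finite index $H\leq\bar G$), and that in fact it is the profinite completion of $\bar G/\bar G^{00}_A$. The latter uses the standard fact that a type-definable subgroup of finite index is already definable: closed finite-index subgroups of $\bar G/\bar G^{00}_A$ correspond to $A$-type-definable, and hence $A$-definable, finite-index subgroups of $\bar G$ containing $\bar G^{00}_A$, whose intersection is $\bar G^0_A$. So $\bar G/\bar G^{00}_A$ is profinite if and only if the canonical map $\bar G/\bar G^{00}_A\to\bar G/\bar G^0_A$ is an isomorphism, i.e.\ $\bar G^{00}_A=\bar G^0_A$.

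The only subtle point, and the ``main obstacle'' such as it is, is the reduction in the first step: making sure that the universality of the Bohr compactification applies to $K$ as a compact Hausdorff quotient even though the canonical map $G\to K$ need not have dense image for general parameter sets $A$. This is precisely why the reduction is carried out by taking $K$ itself as the discrete abelian group in Fact~\ref{fact:bohrIfOnly}, sidestepping density issues entirely. In the non-abelian case, one would instead invoke the structural fact that every compact Hausdorff topological group of finite exponent is profinite, but this is not needed for the applications in the rest of the paper, where the ambient groups are additive groups of rings.
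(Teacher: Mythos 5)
Your argument for the main implication is essentially sound: passing to $K=\bar G/\bar G^{00}_A$ with the logic topology, noting it is a compact Hausdorff group of finite exponent, applying Fact~\ref{fact:bohrIfOnly} to $K$ viewed as a \emph{discrete} abelian group, and using the universal property of the Bohr compactification to exhibit $K$ as a continuous homomorphic image (hence topological group quotient) of a profinite group is correct, and it neatly sidesteps density questions. For non-abelian $G$ you invoke the fact that every compact Hausdorff group of finite exponent is profinite, which is exactly what the paper does in its footnote (the paper's proof is otherwise just a citation of \cite[Corollary 2.8]{GJK22}); note that once this fact is allowed, the Bohr-compactification detour becomes superfluous even in the abelian case.

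The genuine gap is in your justification of the ``equivalently'' clause, namely the claim that closed finite-index subgroups of $K$ correspond to \emph{$A$-type-definable} (hence $A$-definable) finite-index subgroups of $\bar G$ containing $\bar G^{00}_A$. In the logic topology, the preimage of a closed set is only type-definable over \emph{some} small parameter set, not over $A$: already the preimage of a point of $K$ is a coset of $\bar G^{00}_A$, which is in general not $A$-invariant, and when $A$ is not a model a closed finite-index subgroup of $K$ need not be invariant under $\Aut(\bar M/A)$, so one cannot upgrade ``type-definable, hence definable'' to ``$A$-definable'' by an invariance argument (that upgrade does work when $A$ is a model, since the image of $G$ is then dense in $K$ and fixed pointwise). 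The implication ``$K$ profinite $\Rightarrow\ \bar G^{00}_A=\bar G^0_A$'' is nevertheless true, but it requires an extra step, e.g.: if $H=\phi(\bar G,b)$ is the preimage of an open subgroup of $K$, of index at most $n$ and containing $\bar G^{00}_A$, then by compactness there is an $A$-definable set $D$ with $\bar G^{00}_A\subseteq D\subseteq H$; the intersection of all subgroups of index at most $n$ of the form $\phi(\bar G,b')$ which contain $D$ is defined by the single $A$-formula $\forall y\,(\psi(y)\rightarrow\phi(x,y))$ (where $\psi$ expresses the displayed conditions on $b'$), so it is an $A$-definable subgroup of bounded, hence finite, index lying between $\bar G^{00}_A$ and $H$; therefore $\bar G^0_A\subseteq H$, and intersecting over all open subgroups of the profinite group $K$ gives $\bar G^0_A\subseteq\bar G^{00}_A$. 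Alternatively, simply quote this equivalence from \cite{GJK22}, as the paper implicitly does.
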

	\begin{proof}
		This is \cite[Corollary 2.8]{GJK22}.\footnote{As pointed out by a referee, the abelianity hypothesis included in \cite{GJK22} is not necessary, since a compact Hausdorff group of finite exponent is always profinite.}
	\end{proof}

	Let us observe that the first part of Corollary~\ref{corollary: 0=00 in rings} follows from Corollary~\ref{corollary: finite exponent implies 00=0}. Indeed, the assumption that $R$ has positive characteristic means that $(R,+)$ is of finite exponent, so $\RRaT_A = (\RR,+)^{0}_A$. But this always implies that $\RRiT_A = \RRiD_A$. Namely, $\RR/\RRaT_A = \RR/(\RR,+)^{0}_A$ is a profinite group which maps onto $\RR/\RRiT_A$ which thus must be also profinite as a group. This implies that $\RR/\RRiT_A$ is a profinite ring, so $\RRiT_A =\RRiD_A$.

	Throughout the paper, when $X$ is a subset of an
	abelian group, $X^{+n}$ will denote the $n$-fold sum $X + \dots +X$.

	\section{Generating in finitely many steps --- equivalent conditions}\label{section: conditions}

	In this section, we formulate several conditions concerning generating in finitely many steps and study relationships between them, showing that they are equivalent in some general situations. In Section~\ref{section: main results}, we will show that condition \nref{condition:group_R00}{1\frac12} defined below holds for all rings
	which, together with the results of this section, will allow us to automatically deduce the other conditions, with the appropriate indices and for the appropriate rings, and in particular Theorem~\ref{theorem: Main Theorem}. The relationships established in this section are also used in the analysis of some concrete examples in Section~\ref{section: examples}.

	As usual, $R$ is a $\emptyset$-definable ring. For a subset $D$ of $R$, let:
	\[
	D_\leftarrow\coloneqq (R\cup \{1\})\cdot D, \; D_\rightarrow\coloneqq D\cdot (R \cup \{1\}), \textrm{ and } D_\leftrightarrow\coloneqq (R \cup \{1\})\cdot D\cdot (R\cup \{1\}).
	\]
	Analogous notation will be used for $D \subseteq \bar R$ (note that there is some ambiguity here, since $R\subseteq \bar R$, but the meaning should be clear from the context in each case). In particular, for $X\coloneqq \RRaT_A$, we get $X_\leftarrow$, $X_\rightarrow$, and $X_\leftrightarrow$ defined before Fact~\ref{fact: occurrence of 000}. Let also:
	\[
	X_\leftarrow' \coloneqq (\bar R \cup \{1\}) \cdot (\bar R,+)^{0}_A, \; X_\rightarrow'\coloneqq (\bar R,+)^{0}_A \cdot (\bar R \cup \{1\}), \textrm{ and }X_\leftrightarrow'\coloneqq (\bar R \cup \{1\}) \cdot (\bar R,+)^{0}_A \cdot (\bar R \cup \{1\}).
	\]
	Remembering that $J$ was defined as the subgroup of $(\bar R,+)$ generated by $X_i$
	(which were defined analogously to $X_i'$, but with $\RRaT_A$ in place of $\RRaD_A$),
	we define $J'_i$ as the subgroup generated by $X_i'$, for $i \in \{\leftarrow,\rightarrow,\leftrightarrow\}$.
	Then $J'_i$ is an $A$-invariant, left [or right or two-sided] ideal of $\RR$ of bounded index which is contained in $\RR^0_A$. While by Fact \ref{fact: occurrence of 000} we know that $J=\RR^{000}_A$ regardless of the choice of $i$ (and that is why there is no need to write $J_i$), at this moment we do not have an analogous fact about $J'_i$. However, from Corollary \ref{corollary: which conditions hold for rings}(1), it will follow that $J_i' = \RR^{0}_A$, regardless of the choice of $i$.

	By Fact~\ref{fact: ideal component}, in the next definition we can equivalently talk about left, right, and two-sided ideals, so will be skipping the adjectives before ideals.

	\begin{definition}
		\label{definition:conditions}
		We introduce the following conditions for any natural number $n>0$ and $i \in \{\leftarrow,\rightarrow,\leftrightarrow\}$.
		\begin{enumerate}[label=\textrm{(\roman{*})}$_n$, ref = \textrm{(\roman{*})}]
			\item
			\label{condition:X}
			$J(=\RRiI_A)=X_i^{+n}$ (equivalently, $X_i^{+n} = \RRiT_A$).
			\item
			\label{condition:def_R00}
			For every (thick) $A$-definable subset $D$ of $R$ such that $\bar D \supseteq \RRaT_A$, $D_i^{+n}$ contains an $A$-definable, finite-index ideal.
			\item
			\label{condition:group_R00}
			For every $A$-definable, finite index subgroup $H$ of $(R,+)$, $H_i^{+n}$ contains an $A$-definable ideal of finite index.
			\item
			\label{condition:X'_subgroup}
			$J'_i=(X_i')^{+n}$.
			\item
			\label{condition:X'_R0}
			$(X_i')^{+n}=\bar R^{0}_A$.
		\end{enumerate}
		We also consider conditions with half-integer subscripts. E.g.\ \nref{condition:X}{k\frac12}: $J= X+X_i^{+k}$, where $X\coloneqq \RRaT_A$; \nref{condition:def_R00}{k\frac12}: for every (thick) $A$-definable $D$ with $\bar D\supseteq \RRaT_A$, the set $D+D_i^{+k}$ contains an $A$-definable ideal of finite index. The others are defined analogously.

		In order to unify the notation, for $n=k\frac12$ (where $k \in \N$), $D_i^{+n}$ will stand for $D + D_i^{+k}$, etc.
	\end{definition}

	The equivalence of the two formulations of \nref{condition:X}{n} follows immediately from Facts~\ref{fact: occurrence of 000} and \ref{fact: conditions equivalent to 000=00}, but it is much easier: $J$ is clearly an $A$-invariant, bounded index, left [or right or two-sided] ideal of $\RR$ contained in $\RRiT_A$ and containing $X_i^{+n}$, so $J=X_i^{+n}$ if and only if $X_i^{+n} = \RRiT_A$.

	One should keep in mind that since each $\RRaT_A$, $\RRaD_A$, $\RRiT_A$, $\RRiD_A$ is an intersection of $A$-definable symmetric sets, and each $A$-definable symmetric set containing any of them is thick, each of these sets is the intersection of $A$-definable thick sets containing it.

	\begin{proposition}\label{proposition: equality of components and fin. many steps}
		If $n$ is a positive half-integer, then:
		\begin{enumerate}
			\item \nref{condition:def_R00}{n} holds if and only if both \nref{condition:X}{n} holds and $\RRiT_A=\RRiD_A$,
			\item $\RRiT_A=\RRiI_A$ if and only if for some $n$ we have \nref{condition:X}{n},
			\item $\RRiD_A=\RRiI_A$ if and only if for some $n$ we have \nref{condition:def_R00}{n}.
		\end{enumerate}
	\end{proposition}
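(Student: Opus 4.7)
My plan is to derive item (2) directly from Fact~\ref{fact: conditions equivalent to 000=00}, to tackle item (1) via matched compactness arguments in each direction, and then to deduce item (3) formally from (1) and (2). For (2), Fact~\ref{fact: conditions equivalent to 000=00} immediately gives that $\RRiT_A = \RRiI_A$ is equivalent to $\RRiI_A = J$ being generated by $X_i$ in finitely many steps, i.e.\ $X_i^{+n} = J$ for some positive integer $n$, which is precisely \nref{condition:X}{n}; because $0, \RRaT_A \subseteq X_i$, half-integer and integer witnesses can be traded in the existential statement.

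For the forward direction of (1), I would assume \nref{condition:def_R00}{n} and take $y \in \RRiD_A$. The main step is to consider the partial $A$-type $\pi(\bar r, \bar z)$ expressing that $y$ equals the appropriate $n$-fold sum of terms $r_j z_j$ (placed according to $i \in \{\leftarrow,\rightarrow,\leftrightarrow\}$, with one summand taken directly in $\RRaT_A$ in the half-integer case), where $r_j \in \RR \cup \{1\}$ and $z_j \in \RRaT_A$. Any finite fragment of $\pi$ fixes finitely many $A$-definable thick sets $D_1,\ldots,D_k$ with $\bar D_j \supseteq \RRaT_A$; their intersection $D$ is an $A$-definable symmetric set with $\bar D \supseteq \RRaT_A$, hence itself thick by the remark recalled just before this proposition. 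Applying \nref{condition:def_R00}{n} to $D$ then supplies an $A$-definable finite-index ideal inside $D_i^{+n}$, so $y \in \RRiD_A \subseteq \bar{D_i}^{+n}$, realizing that fragment. Saturation then realizes $\pi$, giving $y \in X_i^{+n}$. Combined with the standing chain $X_i^{+n} \subseteq J = \RRiI_A \subseteq \RRiT_A \subseteq \RRiD_A$, this forces equality throughout, yielding both \nref{condition:X}{n} and $\RRiT_A = \RRiD_A$.

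For the backward direction of (1), I would assume \nref{condition:X}{n} together with $\RRiT_A = \RRiD_A$, and fix any $A$-definable thick $D$ with $\bar D \supseteq \RRaT_A$. Then $X_i \subseteq \bar{D_i}$, so $\RRiD_A = X_i^{+n} \subseteq \bar{D_i}^{+n}$. Since $\RRiD_A$ is the intersection of all $A$-definable finite-index ideals of $\RR$ and $\bar{D_i}^{+n}$ is $A$-definable, compactness produces such ideals $I_1,\ldots,I_m$ with $\bar{I_1}\cap\cdots\cap\bar{I_m} \subseteq \bar{D_i}^{+n}$; intersecting with $R$ gives the required $A$-definable finite-index ideal $I_1\cap\cdots\cap I_m \subseteq D_i^{+n}$. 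Item (3) is then immediate: if some \nref{condition:def_R00}{n} holds, (1) and (2) yield $\RRiD_A = \RRiT_A = \RRiI_A$; conversely, $\RRiD_A = \RRiI_A$ forces $\RRiT_A$ to coincide with both, so (2) supplies some \nref{condition:X}{n} and (1) upgrades it to \nref{condition:def_R00}{n}. The subtlest point will be invoking the remark from the preliminaries to ensure that a finite intersection of $A$-definable thick sets containing $\RRaT_A$ is still thick, which is exactly what makes the forward compactness argument go through; the subscript $i$ and the half-integer indices then pass transparently through every step.
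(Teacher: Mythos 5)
Your proposal is correct and follows essentially the same route as the paper: (2) is read off from Fact~\ref{fact: conditions equivalent to 000=00}, the forward direction of (1) squeezes $\RRiD_A\subseteq X_i^{+n}\subseteq J=\RRiI_A\subseteq\RRiT_A\subseteq\RRiD_A$ (your type/saturation argument just spells out the compactness the paper leaves implicit, using that an $A$-definable symmetric set containing $\RRaT_A$ is thick), the backward direction is the same compactness extraction of a finite-index ideal inside $\bar D_i^{+n}$, and (3) is formal from (1) and (2).
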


	\begin{proof}
		(1) ($\rightarrow$). By \nref{condition:def_R00}{n}, we get $\RRiD_A \subseteq X_i^{+n}$, and trivially $X_i^{+n}\subseteq J$, which is $\RRiI_A$ by Fact~\ref{fact: occurrence of 000}.
		On the other hand, $\RRiI_A \subseteq \RRiT_A \subseteq \RRiD_A$. So we get $\RRiI_A= X_i^{+n} = \RRiT_A = \RRiD_A$.\\
		($\leftarrow$). By \nref{condition:X}{n} and $\RRiT_A=\RRiD_A$, for any $D$ from \nref{condition:def_R00}{n}, $\RRiD_A \subseteq \bar D_i^{+n}$. So the conclusion follows by compactness.

		(2) is a part of Fact~\ref{fact: conditions equivalent to 000=00}.

		(3) follows from (1) and (2).
	\end{proof}

	\begin{lemma}
		\label{lemma:s-unital_thick_kR}
		If $R$ is left [right] s-unital, then for every thick set $D\subseteq R$ there is $N>0$ such that $NR\subseteq R\cdot D$ [resp. $NR\subseteq D\cdot R$]. If $R$ is s-unital, then we get $NR\subseteq R\cdot D \cap D\cdot R \subseteq R\cdot D\cdot R$.
	\end{lemma}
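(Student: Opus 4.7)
The plan is to apply $n$-thickness to an arithmetic progression in $(R,+)$ to force a nonzero integer multiple of any given $r \in R$ to lie in $D$, and then use s-unitality to rewrite the result as an element of $R\cdot D$ (or $D\cdot R$) by placing an ``approximate unit'' for $r$ on the appropriate side.

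Suppose $D$ is $n$-thick, and set $N\coloneqq (n-1)!$ (any $N$ divisible by each $k\in\{1,\ldots,n-1\}$ works equally well). Given $r\in R$, I would apply $n$-thickness in $(R,+)$ to the $n$ elements $0,r,2r,\ldots,(n-1)r$ to obtain $i<j<n$ with $(j-i)r\in D$; writing $k\coloneqq j-i\in\{1,\ldots,n-1\}$, this yields $kr\in D$ and $N/k\in\Z$. In the left s-unital case, choose $s\in R$ with $sr=r$; then
\[
\bigl((N/k)s\bigr)\cdot(kr) \;=\; (N/k)\cdot k\cdot(sr) \;=\; Nr,
\]
placing $Nr$ in $R\cdot D$. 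The right s-unital case is symmetric: picking $s$ with $rs=r$ and using $(kr)\cdot\bigl((N/k)s\bigr)=Nr$ puts $Nr$ in $D\cdot R$. If $R$ is s-unital, both assertions apply, giving $NR\subseteq R\cdot D\cap D\cdot R$, and each of $R\cdot D$ and $D\cdot R$ is contained in $R\cdot D\cdot R$ by a further application of s-unitality to the $D$-factor (e.g.\ for $r'd\in R\cdot D$, pick $t\in R$ with $dt=d$ by right s-unitality of $d$, and write $r'd=r'dt\in R\cdot D\cdot R$; the other inclusion is analogous via left s-unitality).

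There is no real obstacle here: the only subtlety is the bookkeeping of which side the approximate unit $s$ sits on. Since $s$ depends on $r$ while $N$ is uniform, one must position $(N/k)s$ on the side of $kr$ opposite where $s$ already acts on $r$, so that associativity collapses the triple product to $Nr$. With this choice, the calculation is immediate.
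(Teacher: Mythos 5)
Your proof is correct and follows essentially the same route as the paper's: apply $n$-thickness to the multiples $0,r,\dots,(n-1)r$ to get $kr\in D$ for some $k\in\{1,\dots,n-1\}$, pick an approximate unit $s$ for $r$ on the appropriate side, and write $Nr=\left(\frac{N}{k}\right)s\cdot(kr)$ (the paper uses $N=n!$ instead of $(n-1)!$, which is immaterial), with the two-sided inclusion $R\cdot D\cup D\cdot R\subseteq R\cdot D\cdot R$ obtained exactly as you do, by applying s-unitality to the $D$-factor.
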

	\begin{proof}
		Let us prove the left version; the right version is similar, and the two-sided version follows from the one-sided versions (note that s-unitality implies that $R\cdot D \cup D\cdot R \subseteq R\cdot D\cdot R$).

		Fix an $n$-thick set $D$. We will show that $n!R\subseteq R\cdot D$. Fix an arbitrary $r\in R$. Choose $s \in R$ with $r=sr$ (by left s-unitality). By $n$-thickness of $D$, there is some $k \in \{1,\dots,n-1\}$ such that $kr\in D$. Then $k$ divides $n!$, and it follows that \[n!r = n!sr = \left(\frac{n!}{k}\right)skr\in R\cdot D.\qedhere\]
	\end{proof}

	\begin{proposition}\label{proposition: equivalence of finite numbers of steps}
		If $n$ is a positive half-integer, then:
		\begin{enumerate}
			\item \nref{condition:def_R00}{n} implies \nref{condition:X}{n} and \nref{condition:group_R00}{n},
			\item conditions \nref{condition:group_R00}{n}, \nref{condition:X'_subgroup}{n}, and \nref{condition:X'_R0}{n} are all equivalent,
			\item if $R$ is of positive characteristic, then all conditions \nref{condition:X}{n} -- \nref{condition:X'_R0}{n} are equivalent,
			\item if $R$ is s-unital, then \nref{condition:def_R00}{n} is equivalent to \nref{condition:X}{n}, and \nref{condition:group_R00}{n} implies \nref{condition:def_R00}{n+1}.
		\end{enumerate}
	\end{proposition}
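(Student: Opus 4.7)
The plan is to verify the four items in order, relying on compactness in $\bar M$ together with the structural facts recalled in Section~\ref{section: preliminaries}. Part (1) is essentially immediate: \nref{condition:def_R00}{n} $\Rightarrow$ \nref{condition:X}{n} is Proposition~\ref{proposition: equality of components and fin. many steps}(1), while \nref{condition:def_R00}{n} $\Rightarrow$ \nref{condition:group_R00}{n} follows since every $A$-definable finite index additive subgroup $H$ of $(R,+)$ is automatically $([R:H]+1)$-thick by pigeonhole and its interpretation contains $(\bar R,+)^{00}_A$, so \nref{condition:def_R00}{n} applied with $D\coloneqq H$ is already the conclusion of \nref{condition:group_R00}{n}.

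For (2) I would argue around the cycle \nref{condition:group_R00}{n} $\Rightarrow$ \nref{condition:X'_R0}{n} $\Rightarrow$ \nref{condition:X'_subgroup}{n} $\Rightarrow$ \nref{condition:X'_R0}{n}, together with \nref{condition:X'_R0}{n} $\Rightarrow$ \nref{condition:group_R00}{n}. The first direction uses saturation: for $a\in\bar R^0_A$, each $A$-definable finite index additive subgroup $H$ provides via \nref{condition:group_R00}{n} an $A$-definable finite index ideal $I_H\subseteq H_i^{+n}$ with $a\in\bar I_H\subseteq\bar H_i^{+n}$, and the resulting partial type expressing $a$ as an $n$-fold sum of elements of $X_i'$ is finitely satisfiable (finite intersections of $A$-definable finite index subgroups remain of the same form), so by saturation $a \in (X_i')^{+n}$. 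For the converse \nref{condition:X'_R0}{n} $\Rightarrow$ \nref{condition:group_R00}{n}, for any $A$-definable finite index $H$ the containments $(\bar R,+)^0_A \subseteq \bar H$ and \nref{condition:X'_R0}{n} give $\bar R^0_A = (X_i')^{+n} \subseteq \bar H_i^{+n}$, so since $\bar R^0_A$ is the intersection of $A$-definable finite index ideals and $\bar H_i^{+n}$ is $A$-definable, compactness supplies one such ideal inside. The step \nref{condition:X'_R0}{n} $\Rightarrow$ \nref{condition:X'_subgroup}{n} is trivial. The critical step is \nref{condition:X'_subgroup}{n} $\Rightarrow$ \nref{condition:X'_R0}{n}: under \nref{condition:X'_subgroup}{n} the ideal $J'_i = (X_i')^{+n}$ is $A$-type-definable (since $X_i'$ is) and contains $(\bar R,+)^0_A$; hence $\bar R / J'_i$ is a quotient of the profinite topological group $\bar R/(\bar R,+)^0_A$ by a closed normal subgroup, and is therefore profinite as a topological group. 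Being a compact Hausdorff topological ring whose additive group is profinite, $\bar R/J'_i$ is then a profinite ring (by the same principle invoked in the proof of Corollary~\ref{corollary: 0=00 in rings}), so $J'_i$ equals the intersection of the $A$-definable finite index ideals of $\bar R$ containing it; this forces $\bar R^0_A\subseteq J'_i$, and combined with the opposite containment we get $J'_i = \bar R^0_A$, which is \nref{condition:X'_R0}{n}.

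For (3), positive characteristic gives $(\bar R,+)^{00}_A = (\bar R,+)^0_A$ by Corollary~\ref{corollary: finite exponent implies 00=0} applied to $(R,+)$, so $X_i = X_i'$ and $J = J'_i$; thus \nref{condition:X}{n} and \nref{condition:X'_subgroup}{n} literally coincide, and combined with (2) and with $\bar R^{00}_A = \bar R^0_A$ (also holding in positive characteristic) Proposition~\ref{proposition: equality of components and fin. many steps}(1) yields the equivalence of all five conditions. For (4), Corollary~\ref{corollary: 0=00 in rings} gives $\bar R^{00}_A = \bar R^0_A$ in the s-unital case, so \nref{condition:def_R00}{n} $\Leftrightarrow$ \nref{condition:X}{n} by Proposition~\ref{proposition: equality of components and fin. many steps}(1). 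For \nref{condition:group_R00}{n} $\Rightarrow$ \nref{condition:def_R00}{n+1}, given a thick $A$-definable $D$ with $\bar D\supseteq(\bar R,+)^{00}_A$, Lemma~\ref{lemma:s-unital_thick_kR} furnishes $N>0$ with $NR \subseteq R\cdot D$ (and dually); this gives enough slack, via a single application of the operation $(R\cup\{1\})\cdot(-)$, to descend from $D$ to a suitable $A$-definable finite index additive subgroup $H$, so that applying \nref{condition:group_R00}{n} to $H$ yields an $A$-definable finite index ideal inside $H_i^{+n}\subseteq D_i^{+(n+1)}$. The main obstacle is the step \nref{condition:X'_subgroup}{n} $\Rightarrow$ \nref{condition:X'_R0}{n} in Part (2), where the upgrade from profiniteness of the additive group to profiniteness of the ring is the crucial ingredient; everything else reduces to standard compactness and pigeonhole arguments.
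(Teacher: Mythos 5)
Parts (1) and (3), the implication \nref{condition:X'_R0}{n} $\rightarrow$ \nref{condition:group_R00}{n}, the compactness argument for \nref{condition:group_R00}{n} $\rightarrow$ \nref{condition:X'_R0}{n}, and the first half of (4) are fine and essentially follow the paper. However, your key step in (2), namely \nref{condition:X'_subgroup}{n} $\rightarrow$ \nref{condition:X'_R0}{n}, has a genuine gap, in fact two. First, for $i\in\{\leftarrow,\rightarrow\}$ the group $J'_i$ is a priori only a one-sided ideal (e.g.\ $J'_\leftarrow$ is generated by $(\RR\cup\{1\})\cdot(\RR,+)^0_A$ and there is no reason for it to be closed under right multiplication), so $\RR/J'_i$ is not a topological ring and your ``compact ring with profinite additive group is profinite'' argument does not even apply; the proposition is asserted for each of the three choices of $i$, and proving it for $i=\leftrightarrow$ does not transfer to the one-sided cases. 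Second, even when $J'_i$ is two-sided, profiniteness of $\RR/J'_i$ only gives that $J'_i$ is the intersection of the preimages of open ideals, i.e.\ of \emph{definable} finite-index ideals of $\RR$; your jump to ``$J'_i$ equals the intersection of the \emph{$A$-definable} finite index ideals containing it'' is exactly the crux and is not automatic, since an open ideal of the quotient need not be invariant under $\Aut(\bar M/A)$, and a finite-index ideal definable with extra parameters need not contain $\RR^0_A$. This can be repaired (each such preimage contains the $A$-invariant, bounded-index group $J'_i$, hence has only boundedly, so finitely, many $A$-conjugates, and the intersection of these conjugates is an $A$-definable finite-index ideal inside it), but as written the step is missing, and it is precisely the point where the paper works: the paper instead writes $J'_i=\bigcap_j G_j$ with $G_j$ $A$-definable finite-index \emph{additive} subgroups, observes that the relevant stabilizer $\Stab_{\RR}(G_j)$ (or $\Stab'_{\RR}(G_j)$, depending on $i$) has finite index because it is definable and contains the bounded-index $J'_i$, and then invokes \cite[Lemma 3.9 and Proposition 3.10]{GJK22} to produce $A$-definable finite-index two-sided ideals inside each $G_j$; this works uniformly for all three values of $i$, including the one-sided ones.

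A smaller but real hand-wave occurs in the second half of (4): saying that $NR\subseteq R\cdot D$ ``gives enough slack \dots to descend from $D$ to a suitable $A$-definable finite index additive subgroup $H$'' skips the substantive point. One needs $H\subseteq D+NR$, and the paper obtains it by passing to the interpretable quotient $S\coloneqq R/NR$, which has positive characteristic, so $(\bar S,+)^{00}_A=(\bar S,+)^0_A$ by Corollary~\ref{corollary: finite exponent implies 00=0}, and $(\bar S,+)^{00}_A=(\RR,+)^{00}_A/N\RR\subseteq\bar D/N\RR$, whence such an $H$ exists; then $H_i^{+n}\subseteq D_i^{+n}+NR\subseteq D_i^{+(n+1)}$. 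Without this (or an equivalent) argument the existence of $H$ is unjustified. Finally, a minor omission: in your saturation argument for \nref{condition:group_R00}{n} $\rightarrow$ \nref{condition:X'_R0}{n} you only establish $\RR^0_A\subseteq(X_i')^{+n}$; the equality also needs the (easy) containment $(X_i')^{+n}\subseteq J'_i\subseteq\RR^0_A$, which you use later but should state here.
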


	\begin{proof}
		(1) The implication \nref{condition:def_R00}{n} $\rightarrow$ \nref{condition:X}{n} follows from Proposition~\ref{proposition: equality of components and fin. many steps}(1), and \nref{condition:def_R00}{n} $\rightarrow$ \nref{condition:group_R00}{n} is trivial.

		(2) The implication \nref{condition:X'_R0}{n} $\rightarrow$ \nref{condition:X'_subgroup}{n} is trivial.\\
		\nref{condition:group_R00}{n} $\rightarrow$ \nref{condition:X'_R0}{n}. By \nref{condition:group_R00}{n} and compactness, we get that ${\bar R}^{0}_A\subseteq (X_i')^{+n}$, so \nref{condition:X'_R0}{n} follows from the obvious observation that $(X_i')^{+n} \subseteq J'_i \subseteq \bar R^{0}_A$.\\
		\nref{condition:X'_subgroup}{n} $\rightarrow$ \nref{condition:group_R00}{n}. By \nref{condition:X'_subgroup}{n}, $J'_i$ is an $A$-type-definable, left [or right or two-sided] ideal containing $(\bar R,+)^0_A$.
		So $J'_i$ is an intersection $\bigcap_j G_j$ of some $A$-definable subgroups of $(\RR,+)$ of finite index. Note that $\Stab_{\bar R}(G_j)\coloneqq \{ r \in \bar R: rG_j \subseteq G_j\}$ for $i\in \{\rightarrow,\leftrightarrow\}$ and $\Stab_{\bar R}'(G_j)\coloneqq \{ r \in \bar R: G_j r\subseteq G_j\}$ for
		$i={\leftarrow}$ has finite index, because it is definable and contains $J'_i$ which is of bounded index. Hence, by \cite[Lemma 3.9 and Proposition 3.10]{GJK22}, we get that each group $G_j$ contains an $A$-definable, finite index, two-sided ideal. So, $\bar R^{0}_A \subseteq J'_i$. Therefore, $J'_i =\bar R^{0}_A$.
		Since $J'_i= (X_i')^{+n}$, we get \nref{condition:group_R00}{n} by
		compactness.

		(3) By Corollary~\ref{corollary: 0=00 in rings}, ${\bar R}^{00}_A={\bar R}^0_A$, so the implication \nref{condition:X}{n} $\rightarrow$ \nref{condition:def_R00}{n} follows from Proposition~\ref{proposition: equality of components and fin. many steps}(1).
		To show \nref{condition:group_R00}{n} $\rightarrow$ \nref{condition:def_R00}{n}, consider any $D$ from \nref{condition:def_R00}{n}. By Corollary~\ref{corollary: finite exponent implies 00=0}, $({\bar R},+)^{00}_A=({\bar R},+)^0_A$, so we can find a finite index, $A$-definable subgroup $H\subseteq D$. By \nref{condition:group_R00}{n}, $H_i^{+n}$ contains an $A$-definable, finite-index ideal, and so does $D_i^{+n}$.

		(4) Again by Corollary~\ref{corollary: 0=00 in rings}, ${\bar R}^{00}_A={\bar R}^0_A$, so the implication \nref{condition:X}{n} $\rightarrow$ \nref{condition:def_R00}{n} follows from Proposition~\ref{proposition: equality of components and fin. many steps}(1).
		To show \nref{condition:group_R00}{n} $\rightarrow$ \nref{condition:def_R00}{n+1}, consider any $D$ from \nref{condition:def_R00}{n}. Since $D$ is thick, Lemma~\ref{lemma:s-unital_thick_kR} yields a positive natural number $N$ such that the two-sided ideal $NR$ is contained in $R\cdot D \cap D\cdot R \subseteq R\cdot D\cdot R$.
		Then $S\coloneqq R/NR$ is an interpretable ring of characteristic $N>0$, and so $({\bar S},+)^{00}_A = ({\bar S},+)^0_A$ by Corollary~\ref{corollary: finite exponent implies 00=0}. On the other hand, $({\bar S},+)^{00}_A = ({\bar R},+)^{00}_A/N{\bar R} \subseteq {\bar D}/N{\bar R}$. Thus, there exists an $A$-definable, finite index subgroup $H$ of $(R,+)$ such that $H \subseteq D + NR$.
		By \nref{condition:group_R00}{n}, $H_i^{+n}$ contains an $A$-definable ideal of finite index. This is enough, as $H_i^{+n}$ is contained in $D_i^{+n} +NR \subseteq D_i^{+n}+ (R\cdot D \cap D\cdot R \cap R\cdot D\cdot R) \subseteq D_i^{+(n+1)}$.
	\end{proof}

	\begin{remark}
		\label{remark:equivalence_one-sided}
		We also have an analogue of Proposition~\ref{proposition: equivalence of finite numbers of steps}(4) when $R$ is only left [or right] s-unital. For instance, if $R$ is left s-unital, then \nref{condition:X}{n} $\rightarrow$ \nref{condition:def_R00}{n} for all $i\in \{\leftarrow, \rightarrow, \leftrightarrow\}$, and \nref{condition:group_R00}{n} $\rightarrow$ \nref{condition:def_R00}{n+1} for $i\in \{\leftarrow, \leftrightarrow\}$, both with essentially the same proof.
	\end{remark}

	In the next section, we will see that \nref{condition:group_R00}{1\frac12} always holds. However, any ring $R$ with zero multiplication and $\RRaT_A \ne \RRaD_A$ is an example where \nref{condition:X}{\frac12} holds, but \nref{condition:def_R00}{n} fails for all $n$ (by Proposition~\ref{proposition: equality of components and fin. many steps}(1), as $\RRiT_A=\RRaT_A \ne \RRaD_A=\RRiD_A$). Particular examples of this form are $\Z$ equipped with the full structure or the circle $S^1$ treated as a group
	definable in $(\R,+,\cdot)$. In Example~\ref{example: XZ[X]}, we will check that for $R\coloneqq X\Z[X]$ (a ring without zero divisors) equipped with the full structure \nref{condition:def_R00}{n} also fails for all $n$.

	\begin{question}
		Let $R$ be arbitrary. Does \nref{condition:group_R00}{n}
		imply that \nref{condition:X}{m} holds for some $m$? Is it true with $m=n+1$ or even with $m=n$?
	\end{question}

	As remarked after Fact~\ref{fact: conditions equivalent to 000=00}, condition $\exists n$\nref{condition:X}{n} does not depend on the choice of $i \in \{\leftarrow,\rightarrow,\leftrightarrow\}$, because it is equivalent to $\bar R^{00}_A=\bar R^{000}_A$.
	Hence, by Proposition~\ref{proposition: equivalence of finite numbers of steps}, the same is true for all other conditions from Definition~\ref{definition:conditions} when $R$ is $s$-unital or of positive characteristic.

	\section{Main results}\label{section: main results}
	We will prove here Theorems~\ref{theorem: main algebraic theorem} and \ref{theorem: Main Theorem} (see Theorem~\ref{theorem: main theorem}, and Corollaries~\ref{corollary: main corollary}, \ref{corollary: 3 steps}, and \ref{corollary: 0=00=000}).

	\begin{definition}
		\label{def:coset_indep}
		We will say that two subgroups $H_1$ and $H_2$ of an abelian group $G$ are \emph{coset-independent} if any coset of $H_1$ intersects any coset of $H_2$. They are \emph{coset-dependent} if they are not coset-independent.
	\end{definition}

	\begin{remark}\label{remark: coset independent}
		Let $G$ be an abelian group and $H_1,H_2 \leq G$. The following conditions are equivalent.
		\begin{enumerate}[label=(\roman*)]
			\item $H_1$ and $H_2$ are coset-independent.
			\item $H_1$ intersects any coset of $H_2$.
			\item $H_1+H_2 =G$.
		\end{enumerate}
		Thus, $H_1$ and $H_2$ are coset-dependent if and only if $H_1+H_2$ is a proper subgroup of $G$.
	\end{remark}
	\begin{proof}
		Easy exercise.
	\end{proof}

	\begin{lemma}\label{lemma: generation in finitely many steps by generic}
		If $D$ is a generic subset of a group $G$, then $\langle D \rangle$ is generated in finitely many steps.
	\end{lemma}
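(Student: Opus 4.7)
The plan is to extract an explicit bound on the word length directly from the covering property. Since $D$ is generic, I would fix $g_1,\dots,g_n \in G$ such that $G = g_1 D \cup \cdots \cup g_n D$. Setting $H \coloneqq \langle D \rangle$ and intersecting with $H$, I obtain $H = \bigcup_{i=1}^n (g_i D \cap H)$.

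The key observation I would make is that whenever $g_i D \cap H$ is nonempty, the coset representative $g_i$ itself must lie in $H$. Indeed, if $g_i d \in H$ for some $d \in D$, then since $D \subseteq H$ we have $g_i = (g_i d) d^{-1} \in H$, and hence $g_i D \subseteq H$. Let $I \subseteq \{1,\dots,n\}$ be the (finite) set of indices for which $g_i \in H$; then $H = \bigcup_{i \in I} g_i D$.

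Now for each $i \in I$, $g_i$ is a product of finitely many elements of $D \cup D^{-1}$; let $N$ be the maximum length among these finitely many representations. Then every element of $H$ can be written as $g_i d$ for some $i \in I$ and $d \in D$, hence as a product of at most $N+1$ factors from $D \cup D^{-1}$. This gives $\langle D \rangle = (D \cup D^{-1} \cup \{e\})^{N+1}$, i.e.\ generation in finitely many steps.

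There is no real obstacle: the argument is just a direct unpacking of genericity. The only subtle point worth stating carefully is the reduction from the covering of $G$ to the covering of $H$ by those $g_i D$ with $g_i \in H$, which relies on the elementary fact that a coset $g_i D$ either lies entirely in $H$ or is disjoint from it (once we know $D \subseteq H$).
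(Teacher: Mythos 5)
Your proof is correct, and it is a genuinely different (third) route from the two arguments in the paper. The paper first argues model-theoretically: passing to a monster model, $\langle \bar D\rangle$ is a $\bigvee$-definable subgroup of finite index, so compactness makes it definable and generated in finitely many steps; its alternative constructive proof sets $E=D\cup\{1\}\cup D^{-1}$ and shows $E^{3n}=\langle D\rangle$ by a pigeonhole argument on a strictly increasing chain $g_i\in E^i\setminus E^{i-1}$, where $n$ is the number of translates covering $G$. You instead restrict the covering to $H=\langle D\rangle$, observe that a translate $g_iD$ meeting $H$ forces $g_i\in H$ (hence $g_iD\subseteq H$), and then bound word length by $N+1$, where $N$ is the maximal word length of the finitely many relevant $g_i$ in $D\cup D^{-1}$. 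This is airtight (note $D\neq\emptyset$ since translates of $D$ cover the nonempty group $G$, and your ``coset'' $g_iD$ is really just a translate of $D$, though the dichotomy you invoke is exactly what your computation establishes). The trade-off: the paper's pigeonhole proof yields a bound $3n$ depending only on the number of covering translates, a uniformity that is aesthetically cleaner, whereas your bound $N+1$ depends on the particular covering elements and how they sit inside $\langle D\rangle$; but the lemma as stated, and its application in the proof of the main theorem (where only some finite number of steps is needed to get definability of the generated ideal), requires no such uniformity, so your more elementary argument fully suffices.
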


	\begin{proof}
		Equip $G$ with the full structure, and let $\bar G \succ G$ be a monster model and $\bar D$ the interpretation of $D$ in $\bar G$. Then the group $\langle \bar D \rangle$ is $\bigvee$-definable and of finite index, hence, by compactness, it is definable and generated in finitely many steps.

		Alternatively, this can be proved constructively as follows (we thank an anonymous referee for suggesting this alternative proof): let $n$ be such that $G$ is covered by $n$ translates of $D$, and put $E=D\cup \{1\}\cup D^{-1}$. We will show that $E^{3n}=\langle D\rangle$: if not, we could choose for each $1\leq i\leq 3n+1$ an element $g_i\in E^i\setminus E^{i-1}$. By pigeonhole, two of $g_1,g_4,\dots, g_{3n+1}$, say $g_i$ and $g_j$ (where $i<j$), are in the same $gD$ for some $g\in G$. Then $i+2\leq j-1$ and $g_i^{-1}g_j\in D^{-1}g^{-1}gD=D^{-1}D\subseteq E^2$. Thus $g_j\in g_iE^2\subseteq E^iE^2=E^{i+2}\subseteq E^{j-1}$, a contradiction.
	\end{proof}

	\begin{lemma}\label{lemma: main lemma}
		Let $R$ be a ring and $H$ be a finite index subgroup of $(R,+)$. Let $T$ be the collection of all $a \in R/H$ such that for every finite index subgroup $K$ of $(R,+)$ we have $a \in RK/H$. Then $T$ is a subgroup of $(R/H,+)$.
	\end{lemma}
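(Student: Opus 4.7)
The plan is to recognize that $T$ is, by definition, an intersection of subgroups of the abelian group $R/H$, and hence itself a subgroup. The key is to interpret $RK$ in the standard ring-theoretic sense: for each finite index additive subgroup $K$ of $(R,+)$, the symbol $RK$ denotes the additive subgroup of $(R,+)$ generated by the set of products $\{rk : r \in R,\, k \in K\}$, which is the same as the left ideal of $R$ generated by $K$.

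Under this reading, $RK$ is an additive subgroup of $R$, and so its image $RK/H$ (i.e.\ $(RK+H)/H$) under the canonical surjection $R \twoheadrightarrow R/H$ is an additive subgroup of $R/H$. By definition, $T = \bigcap_K (RK+H)/H$ where $K$ ranges over all finite index additive subgroups of $(R,+)$; since an intersection of any family of subgroups of an abelian group is again a subgroup, we conclude that $T$ is a subgroup of $(R/H,+)$.

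I do not anticipate any genuine obstacle, since the lemma is a concise bookkeeping statement isolating the subgroup structure of $T$ for use in the proof of Theorem~\ref{theorem: main algebraic theorem}. The only subtle point is the convention distinguishing $RK$ — the additive subgroup of $(R,+)$ generated by the products — from the bare set product $R \cdot K$ used in the statement of the main theorem (where $H + R \cdot H$ is treated as a set that is required to contain a finite index ideal, not as a subgroup in its own right). This is entirely analogous to the earlier distinction in the paper between $X_\leftarrow = (\bar R \cup \{1\}) \cdot \RRaT_A$ and the subgroup $J$ of $(\bar R,+)$ that it generates.
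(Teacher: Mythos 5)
There is a genuine gap, and it lies in the very first step: you reinterpret $RK$ as the additive subgroup (equivalently, the left ideal) generated by the products $\{rk : r\in R,\ k\in K\}$. That is not the meaning in this paper. Here $RK$ (like $R\cdot H$, $D_\leftarrow$, $X_i$, etc.) is the \emph{set} of products $\{rk : r\in R,\ k\in K\}$, and $a\in RK/H$ means $a=rk/H$ for a \emph{single} product $rk$. This is forced by the surrounding text: the whole paper is organized around counting how many summands of such product sets are needed (the sumsets $X_i^{+n}$ and the half-integer ``steps''), Theorem~\ref{theorem: main theorem} asserts that the \emph{set} $H+R\cdot H$ contains a finite index ideal (which would be nearly vacuous bookkeeping under your reading), and Lemma~\ref{lemma:Z[x]_no_iii1} exhibits an $H$ for which the set $R\cdot H$ contains no finite index ideal --- a statement that only makes sense for the set product. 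Moreover, the use of the present lemma in the proof of Theorem~\ref{theorem: main theorem} requires exactly the set-product version: one needs $H_0$ with $R\cdot H_0/H=T$, so that the ideal generated by $H_0$ sits inside $H+R\cdot H$ as a \emph{sumset of two sets}, not inside $H+\langle R\cdot H\rangle$.

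Under the correct reading your argument collapses: $RK/H$ is symmetric and contains $0$, but it is in general not closed under addition, so $T$ is an intersection of subsets that are not subgroups, and ``intersection of subgroups is a subgroup'' does not apply. The lemma is in fact the technical heart of the main theorem, and closure of $T$ under addition is the nontrivial point: given $a,b\in T$ and a finite index $K$, one must produce a \emph{single} element $k\in K$ and $r,s\in R$ with $rk/H=a$ and $sk/H=b$, so that $a+b=(r+s)k/H\in RK/H$. The paper does this by choosing, via a minimality argument, a finite index subgroup $K_n$ controlling the indices $[K_n:\ker(g_s)\cap K_n]$ for the homomorphisms $g_s(x)=sx/H$, and then splitting into two cases according to whether $\ker(g_r)\cap K$ and $\ker(g_s)\cap K$ can be chosen coset-independent (Definition~\ref{def:coset_indep}, Remark~\ref{remark: coset independent}): coset-independence yields the common $k$, while the coset-dependent case contradicts the minimality of $n$. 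None of this is recoverable from the intersection-of-subgroups observation, so the proposal does not prove the lemma as intended.
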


	\begin{proof}
		It is clear that $T$ is closed under inverses and that $0 \in T$.
		Consider any $a,b \in T$. We need to show that $a+b \in T$.

		For $r \in R$ let $g_r \colon H \to R/H$ be given by $g_r(x) \coloneqq rx/H$; this is a group homomorphism.
		Since for all $s \in R$, $[H : \ker(g_s)] \leq [R:H]$ which is finite, we can find a smallest natural number $n$ for which there exists a finite index subgroup $K_n$ of $H$ such that
		\[
		(\forall s \in R) (b \in sK_n/H \Rightarrow [K_n:\ker(g_s) \cap K_n] \leq n).
		\]

		\paragraph{\textbf{Case 1.}} For every finite index subgroup $K$ of $K_n$, there are $r,s \in R$ with $a \in rK/H$ and $b \in sK/H$ such that $\ker(g_r) \cap K$ and $\ker(g_s) \cap K$ are coset-independent subgroups of $K$.

        Then, since $g_r^{-1}(a) \cap K$ and $g_s^{-1}(b) \cap K$ are cosets of  $\ker(g_r) \cap K$ and $\ker(g_s) \cap K$, respectively, they have a non-empty intersection, i.e. there is $k \in K$ with $rk/H=a$ and $sk/H=b$.
		Hence, $a+b = (r+s)k/H \in RK/H$. Since this holds for every finite index subgroup $K$ of $K_n$ (so also for every finite index subgroup of $R$), we conclude that $a+b \in T$.

		\paragraph{\textbf{Case 2.}} There exists a finite index subgroup $K$ of $K_n$ such that for all $r,s \in R$ with $a \in rK/H$ and $b \in sK/H$, $\ker(g_r) \cap K$ and $\ker(g_s) \cap K$ are coset-dependent subgroups of $K$.

		By the definition of $T$, we can pick $r_0 \in R$ with $a \in r_0K/H$. By Remark~\ref{remark: coset independent}, for any $s \in R$ with $b\in sK/H$ (by the definition of $T$, at least one such $s$ exists),
		\[
		\ker(g_{r_0}) \cap K \leq (\ker(g_{r_0}) \cap K) + (\ker(g_s) \cap K) \lneq K.
		\]
		Since $\ker(g_{r_0}) \cap K$ is a fixed finite index subgroup of $K$, there are only finitely many possibilities $L_0,\dots,L_{m-1}$ for $(\ker(g_{r_0}) \cap K) + (\ker(g_s) \cap K)$ when $s$ varies. Also, by the above strict inclusion and the fact that $K \leq K_n$, we have
		\[
		[(\ker(g_{r_0}) \cap K) + (\ker(g_s) \cap K): (\ker(g_s) \cap K)] < [K : \ker(g_s) \cap K] \leq [K_n : \ker(g_s) \cap K_n]
		\]
		for every $s \in R$ with $b\in sK/H$.

		Put $K_{n-1}\coloneqq L_0 \cap \dots \cap L_{m-1}$. This is a finite index subgroup of $H$ contained in $(\ker(g_{r_0}) \cap K) + (\ker(g_s) \cap K)$ for every $s\in R$ with $b\in sK/H$.
		Thus, we conclude that
		\[
		(\forall s \in R) (b \in sK_{n-1}/H \Rightarrow [K_{n-1}:\ker(g_s) \cap K_{n-1}] < [K_n : \ker(g_s) \cap K_n]):
		\]
		indeed, if $b\in sK_{n-1}/H$, then $b\in sK/H$ (because $K_{n-1}\leq K$), so, by the above, we have $[L_i:\ker(g_s) \cap K]< [K_n : \ker(g_s) \cap K_n]$, where $L_i=(\ker(g_{r_0}) \cap K) + (\ker(g_s) \cap K)$, and clearly $[L_i:\ker(g_s) \cap K]\geq [K_{n-1}:\ker(g_s) \cap K_{n-1}]$.

		Therefore, by the choice of $K_n$, we get
		\[
		(\forall s \in R)(b \in sK_{n-1}/H \Rightarrow [K_{n-1}:\ker(g_s) \cap K_{n-1}] \leq n-1),
		\]
		a contradiction with the minimality of $n$.
	\end{proof}

	\begin{theorem}\label{theorem: main theorem}
		Let $R$ be an arbitrary ring $\emptyset$-definable in a structure $M$ and $A \subseteq M$. Then for every $A$-definable finite index subgroup $H$ of $(R,+)$, the set $H+R\cdot H$ contains an $A$-definable, two-sided ideal of $R$ of finite index.
	\end{theorem}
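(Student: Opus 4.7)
The plan is to combine Lemma~\ref{lemma: main lemma} with an ideal-extraction step to produce the desired two-sided ideal inside $H + R \cdot H$, where $R \cdot H = \{rh : r \in R,\, h \in H\}$ denotes the product set (so that $H + R \cdot H$ is a union of cosets of $H$ but a priori not a subgroup).

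First I would apply Lemma~\ref{lemma: main lemma} to the given $H$: the set $T := \bigcap_K RK/H$, where $K$ ranges over finite-index additive subgroups of $(R,+)$ and $RK$ is the product set, is an additive subgroup of $R/H$. Since $R/H$ is finite and the family $\{RK/H\}_K$ is directed as $K$ decreases, the intersection stabilizes, yielding an $A$-definable finite-index subgroup $K_* \subseteq H$ of $(R,+)$ for which $T = RK_*/H$. The preimage $H^* := \pi^{-1}(T) = H + RK_*$ is then an $A$-definable finite-index additive subgroup of $R$ contained in $H + R \cdot H$.

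Next, I would aim to show that $H^*$ is itself a two-sided ideal of $R$, which would complete the proof by taking $I := H^*$. For the left-ideal property, given $r \in R$ and $h^* = h + r_1 k_1 \in H^*$ (with $h \in H$, $r_1 \in R$, $k_1 \in K_*$), one decomposes
\[
r h^* = r h + (r r_1) k_1,
\]
where the second summand lies in $R K_* \subseteq H^*$. The essential step is to verify $r h \in H^*$, i.e.\ $r h + H \in T$. This would be obtained by a second appeal to the main lemma: for each finite-index $K$, one exhibits a decomposition $r h = h' + r' k'$ with $h' \in H$, $r' \in R$, $k' \in K$, via the common-$k$ mechanism used in the main lemma's proof (applied to the pair $(0, rh+H)$, leveraging the numerical induction on the kernel indices $[K_n : \ker(g_s) \cap K_n]$). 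A symmetric argument, possibly via a right-sided analogue of Lemma~\ref{lemma: main lemma}, handles the right-ideal inclusion $H^* \cdot R \subseteq H^*$.

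The primary obstacle I foresee is precisely this verification in the second step: establishing $rh + H \in T$ for all $r \in R, h \in H$ does \emph{not} follow immediately from the bare subgroup statement of Lemma~\ref{lemma: main lemma}, and requires a delicate reapplication of its proof technique rather than just its conclusion. Should $H^*$ fail to be an ideal in full generality, a fallback approach is to replace $I$ by the largest two-sided ideal of $R$ contained in $H^*$, and to argue --- again by invoking Lemma~\ref{lemma: main lemma} --- that this core retains finite index in $R$; however, the cleanest route is to establish directly that $H^*$ is already the required ideal. Either way, $A$-definability is preserved throughout by the $A$-definability of $H$ and $K_*$.
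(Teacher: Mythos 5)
Your opening move --- forming $T$, noting that the intersection defining it stabilizes, and obtaining a finite-index additive subgroup $H^*=\pi_H^{-1}[T]\subseteq H+R\cdot H$ --- is essentially how the paper's proof begins (modulo the unjustified assertion that the witnessing subgroup $K_*$ is $A$-definable; the paper instead gets definability from the fact that $T$ is finite and $A$-invariant over the monster model, working with $H_0:=\bigcap_{r\in R}g_r^{-1}[T]$). The genuine gap is your second step: the claim that $H^*$ is a two-sided (or even left) ideal is \emph{false} in general, so no reapplication of the technique of Lemma~\ref{lemma: main lemma} can yield $rh+H\in T$ for all $r\in R$, $h\in H$. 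Indeed, since $0\in T$ we have $H\subseteq H^*$, and taking $K=H$ in the definition of $T$ gives $H^*\subseteq H+R\cdot H$; hence if $H^*$ were a left ideal, then $R\cdot H\subseteq H^*$ and so $H+R\cdot H=H^*$ would itself be an additive subgroup, i.e.\ (for unital $R$) $R\cdot H$ would generate a group in two steps. Proposition~\ref{proposition: unbounded number of steps} (for $R=\Z_2^\omega$, Example~\ref{example: Z2omega}) produces finite-index subgroups $H$ for which this fails, and the introduction explicitly notes that the conclusion of the theorem cannot be strengthened to ``$H+R\cdot H$ is a (left) ideal''. So your main route breaks down, and your ``fallback'' --- that the largest two-sided ideal contained in $H^*$ still has finite index --- is exactly the nontrivial content of the theorem, for which you offer no argument.

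For comparison, the paper completes the proof by a different extraction step: it chooses an $A$-definable finite-index subgroup $H_0\leq H$ with $R\cdot H_0/H=T$ (namely $H_0=\bigcap_{r\in R}g_r^{-1}[T]$, a subgroup again by Lemma~\ref{lemma: main lemma}), observes that the \emph{left ideal generated by $H_0$} is contained in the subgroup $H^*$ (which contains $H_0$ and $R\cdot H_0$ and is closed under addition), and then uses genericity of $(R\cup\{1\})\cdot H_0$ together with Lemma~\ref{lemma: generation in finitely many steps by generic} to see that this left ideal equals a finite sumset $((R\cup\{1\})\cdot H_0)^{+n}$, hence is $A$-definable of finite index; finally Fact~\ref{fact: ideal component} (via the results of \cite{GJK22}) converts it into an $A$-definable two-sided ideal of finite index inside $H+R\cdot H$. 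Some step of this kind --- locating a definable finite-index one-sided ideal \emph{inside} the subgroup $H^*$, rather than proving $H^*$ itself is an ideal --- is what your proposal is missing.
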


	\begin{proof}
		Without loss of generality we can assume that $M$ is the monster model.
		Define $T$ as in Lemma~\ref{lemma: main lemma}. Let $H'\coloneqq \pi_H^{-1}[T]$, where $\pi_H \colon R \to R/H$ is the quotient map. By Lemma~\ref{lemma: main lemma}, $H'$ is a subgroup of $(R,+)$. We can find a finite index subgroup $H_0$ of $H$ such that $R\cdot H_0/H=T$; equivalently, $H+R\cdot H_0 = H'$.
		For instance, for each $a\in (R/H)\setminus T$, we can choose a finite index $H_a\leq R$ such that $a\notin R\cdot H_a/H$. Then $H_0'\coloneqq H\cap \bigcap_{a\notin T} H_a$ is as stated.

		In fact, $H_0$ can be chosen to be $A$-definable, namely $H_0\coloneqq \bigcap_{r \in R} g_r^{-1}[T]$ (where $g_r \colon H \to R/H$ is defined by $g_r(x):=rx/H$) works: it is a subgroup by Lemma~\ref{lemma: main lemma};
		it has finite index, as it contains the subgroup $H_0'$ defined in the preceding paragraph (since $g_r[H_0']\subseteq T$ for each $r\in R$);
		it is definable over $A$ by the formula $(x \in H) \wedge (\forall r \in R) (rx/H \in T)$ (note that $T$ is finite and $A$-invariant so $A$-definable, as we work in the monster model); $R\cdot H_0/H \supseteq T$ by the definition of $T$ and the fact that the index $[R:H_0]$ is finite; $R\cdot H_0/H \subseteq T$ by the definition of $H_0$.

		Since $(R \cup \{1\}) \cdot H_0$ is generic and symmetric, by Lemma~\ref{lemma: generation in finitely many steps by generic}, the left ideal $(H_0)$ generated by $H_0$ is the $n$-fold sum
		\[
		((R \cup \{1\}) \cdot H_0)^{+n}=(R \cup \{1\}) \cdot H_0+ \dots + (R \cup \{1\}) \cdot H_0
		\]
		for some $n$, so it is $A$-definable. Also, $(H_0) \subseteq H' \subseteq H + R\cdot H$. Since $(H_0)$ is an $A$-definable, left ideal of $R$ of finite index, we finish using
		Fact~\ref{fact: ideal component}.
	\end{proof}

	\begin{corollary}\label{corollary: main corollary}
		Every ring $R$ has property \nref{condition:group_R00}{1\frac12} from Definition~\ref{definition:conditions}, i.e.\ for every $A$-definable, finite index subgroup $H$ of $(R,+)$:
		\begin{enumerate}[label=(\roman*)]
			\item the set $H+R\cdot H$ contains an $A$-definable ideal of $R$ of finite index;
			\item the set $H+H\cdot R$ contains an $A$-definable ideal of $R$ of finite index;
			\item the set $H+(R\cup \{1\}) \cdot H \cdot (R \cup \{1\})$ contains an $A$-definable ideal of $R$ of finite index.
		\end{enumerate}
	\end{corollary}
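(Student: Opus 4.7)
The plan is to deduce all three items directly from Theorem~\ref{theorem: main theorem}. For item (i), I would unwind the notation: since $H$ is a subgroup of $(R,+)$, we have $H + (R\cup\{1\})\cdot H = H + R\cdot H$, and this is precisely the conclusion of Theorem~\ref{theorem: main theorem}, so (i) requires nothing further.

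For item (ii), the plan is to pass to the opposite ring $R^{\mathrm{op}}$, which has the same underlying set and additive group as $R$, with multiplication $x\cdot_{\mathrm{op}} y \coloneqq y\cdot x$; this is $\emptyset$-definable in the same structure $M$, since one uses the same defining formulas with arguments swapped. The notions of ``$A$-definable finite index additive subgroup'' and ``$A$-definable finite index two-sided ideal'' coincide for $R$ and $R^{\mathrm{op}}$, so applying Theorem~\ref{theorem: main theorem} to $R^{\mathrm{op}}$ yields an $A$-definable finite index two-sided ideal contained in $H + R^{\mathrm{op}}\cdot H$, which, read back in $R$, equals $H + H\cdot R$.

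For item (iii), I would just use (i): since $r\cdot h = r\cdot h\cdot 1 \in (R\cup\{1\})\cdot H\cdot (R\cup\{1\})$ for all $r\in R$ and $h\in H$, we have $H + R\cdot H \subseteq H + (R\cup\{1\})\cdot H\cdot (R\cup\{1\})$, so any $A$-definable finite index ideal contained in the former (produced by (i)) also witnesses (iii). No genuine obstacle is expected: the entire content sits in Theorem~\ref{theorem: main theorem}, and items (ii) and (iii) are formal consequences via opposite-ring duality and trivial set containment, respectively.
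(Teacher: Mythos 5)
Your proposal is correct and follows essentially the same route as the paper, whose proof simply notes that (i) is Theorem~\ref{theorem: main theorem}, (ii) is proved analogously, and (iii) follows from (i) via the trivial inclusion $H+R\cdot H\subseteq H+(R\cup\{1\})\cdot H\cdot(R\cup\{1\})$. Your use of the opposite ring for (ii) is just a clean formalization of the paper's ``analogously'' (note that $R^{\mathrm{op}}$ is indeed $\emptyset$-definable and two-sided ideals and finite index additive subgroups are the same for $R$ and $R^{\mathrm{op}}$), so no substantive difference.
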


	\begin{proof}
		Item (i) is Theorem~\ref{theorem: main theorem}, (ii) can be proved analogously, and (iii) follows from (i).
	\end{proof}

	In Example~\ref{example: RHR not enough}, we will see that in item (iii) in that last corollary we cannot write $H+R\cdot H\cdot R$: in this example, $H+R\cdot H\cdot R=H$ does not have a finite index ideal. If one assumes that $R$ is left [or right] s-unital, then one can write $H+R\cdot H\cdot R$ in (iii), because $H\cdot R \subseteq R\cdot H\cdot R$ [resp. $R\cdot H \subseteq R\cdot H\cdot R$].

	As a conclusion of Proposition~\ref{proposition: equivalence of finite numbers of steps} and Corollary~\ref{corollary: main corollary}, we get
	\begin{corollary}\phantomsection\label{corollary: which conditions hold for rings}
		\begin{enumerate}
			\item Conditions \nref{condition:group_R00}{1\frac12}, \nref{condition:X'_subgroup}{1\frac12}, and \nref{condition:X'_R0}{1\frac12} hold for all rings.
			\item All five conditions \nref{condition:X}{1\frac12} -- \nref{condition:X'_R0}{1\frac12} hold for all rings of positive characteristic.
			\item Conditions \nref{condition:def_R00}{2\frac12} and \nref{condition:X}{2\frac12} hold for all s-unital rings.
		\end{enumerate}
	\end{corollary}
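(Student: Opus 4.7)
The plan is to observe that this corollary is a packaging result: it combines Corollary~\ref{corollary: main corollary} with the appropriate clauses of Proposition~\ref{proposition: equivalence of finite numbers of steps}, specialized to the half-integer $n = 1\frac12$.

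For (1), I would first note that items (i), (ii), (iii) of Corollary~\ref{corollary: main corollary} are precisely the three instances of \nref{condition:group_R00}{1\frac12} corresponding to $i \in \{\leftarrow, \rightarrow, \leftrightarrow\}$, once we unwind the definition: for $n = 1\frac12$, $H_i^{+n} = H + H_i$, and $H + (R\cup\{1\})\cdot H = H + R\cdot H$ (since $H$ is a subgroup), with the analogous identity on the right and for the two-sided version. Then Proposition~\ref{proposition: equivalence of finite numbers of steps}(2) asserts the equivalence of \nref{condition:group_R00}{n}, \nref{condition:X'_subgroup}{n}, and \nref{condition:X'_R0}{n} for any (half-integer) $n$; applied at $n = 1\frac12$ this transfers the conclusion to the remaining two conditions.

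For (2), the assumption that $R$ has positive characteristic together with Proposition~\ref{proposition: equivalence of finite numbers of steps}(3) gives the equivalence of all five conditions \nref{condition:X}{n} -- \nref{condition:X'_R0}{n}, so combined with (1) applied at $n = 1\frac12$, all five hold. For (3), assuming $R$ is s-unital, Proposition~\ref{proposition: equivalence of finite numbers of steps}(4) supplies both the implication \nref{condition:group_R00}{n} $\Rightarrow$ \nref{condition:def_R00}{n+1} and the equivalence \nref{condition:def_R00}{n} $\Leftrightarrow$ \nref{condition:X}{n}. Feeding \nref{condition:group_R00}{1\frac12} from (1) into the implication produces \nref{condition:def_R00}{2\frac12}, and the equivalence then delivers \nref{condition:X}{2\frac12}.

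I do not expect any real obstacle: the heavy lifting has already been done in Theorem~\ref{theorem: main theorem} (through Lemma~\ref{lemma: main lemma}) and in the equivalences established in Section~\ref{section: conditions}. The corollary is a pure bookkeeping consequence, and the only point to watch is keeping track of which half-integer indices are produced by each implication in Proposition~\ref{proposition: equivalence of finite numbers of steps}.
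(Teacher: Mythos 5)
Your proposal is correct and matches the paper's own proof: the paper derives this corollary exactly as a combination of Corollary~\ref{corollary: main corollary} (which is condition \nref{condition:group_R00}{1\frac12} for all rings) with the relevant items of Proposition~\ref{proposition: equivalence of finite numbers of steps}, with the same bookkeeping of half-integer indices you describe.
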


	The last two items yield Theorem~\ref{theorem: Main Theorem}, more precisely (using also Corollary~\ref{corollary: 0=00 in rings}):

	\begin{corollary}\label{corollary: 3 steps}
		Let $R$ be a $\emptyset$-definable ring.
		\begin{enumerate}
			\item
			If $R$ is s-unital, then
			\begin{itemize}
				\item
				$\RRaT_A + \RR \cdot \RRaT_A + \RR \cdot \RRaT_A = \RR^{00}_A = \RR^0_A$,
				\item
				$\RRaT_A + \RRaT_A \cdot \RR + \RRaT_A \cdot \RR = \RR^{00}_A= \RR^0_A$,
				\item
				$\RRaT_A + \RR \cdot \RRaT_A \cdot \RR+ \RR\cdot \RRaT_A \cdot \RR= \RR^{00}_A= \RR^0_A$.
			\end{itemize}
			\item
			If $R$ is of positive characteristic (not necessarily s-unital), then
			\begin{itemize}
				\item
				$\RRaT_A + \RR \cdot \RRaT_A = \RR^{00}_A= \RR^0_A$,
				\item
				$\RRaT_A + \RRaT_A \cdot \RR = \RR^{00}_A= \RR^0_A$,
				\item
				$\RRaT_A + (\RR \cup \{1\}) \cdot \RRaT_A \cdot (\RR \cup \{1\})= \RR^{00}_A= \RR^0_A$.
			\end{itemize}
		\end{enumerate}
	\end{corollary}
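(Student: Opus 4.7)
The plan is to derive Corollary~\ref{corollary: 3 steps} as a bookkeeping consequence of Corollary~\ref{corollary: which conditions hold for rings}(2)--(3) together with Corollary~\ref{corollary: 0=00 in rings}, where the only nontrivial work is the set-theoretic simplification of $X_i$ using s-unitality or the subgroup structure of $\RRaT_A$. Throughout, Fact~\ref{fact: occurrence of 000} gives $J=\RRiI_A$ regardless of $i$, and whenever some \nref{condition:X}{n} holds, Fact~\ref{fact: conditions equivalent to 000=00} forces $\RRiI_A=\RRiT_A$.

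For item~(1), I invoke Corollary~\ref{corollary: which conditions hold for rings}(3) to obtain \nref{condition:X}{2\frac12} for every $i\in\{\leftarrow,\rightarrow,\leftrightarrow\}$, i.e.\ $\RRiI_A=\RRaT_A+X_i^{+2}$. By the previous remark this already gives $\RRiI_A=\RRiT_A$, and $\RRiT_A=\RRiD_A$ follows from Corollary~\ref{corollary: 0=00 in rings}. It then remains to rewrite $X_i$. Since s-unitality is expressed by a $\forall\exists$ first-order sentence it transfers to $\RR$, so for every $h\in\RRaT_A$ one can pick $r,s\in\RR$ with $rh=h=hs$; then $h=rh\in\RR\cdot\RRaT_A$, $h=hs\in\RRaT_A\cdot\RR$, and $h=(rh)s\in\RR\cdot\RRaT_A\cdot\RR$. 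These inclusions collapse $X_\leftarrow=\RR\cdot\RRaT_A$, $X_\rightarrow=\RRaT_A\cdot\RR$, and $X_\leftrightarrow=\RR\cdot\RRaT_A\cdot\RR$, and plugging into $\RRaT_A+X_i^{+2}$ yields the three displayed bullets.

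For item~(2), Corollary~\ref{corollary: which conditions hold for rings}(2) provides \nref{condition:X}{1\frac12} for every $i$, i.e.\ $\RRiI_A=\RRaT_A+X_i$; as before, $\RRiI_A=\RRiT_A$ is automatic and $\RRiT_A=\RRiD_A$ comes from Corollary~\ref{corollary: 0=00 in rings} (applied now via finite exponent of $(R,+)$). For $i\in\{\leftarrow,\rightarrow\}$ I absorb $\{1\}$ into the leading $\RRaT_A$: decomposing $(\RR\cup\{1\})\cdot\RRaT_A = \RR\cdot\RRaT_A\cup\RRaT_A$ and using that $\RRaT_A$ is a subgroup with $0\in\RR\cdot\RRaT_A$, the Minkowski sum $\RRaT_A+X_\leftarrow$ collapses to $\RRaT_A+\RR\cdot\RRaT_A$, and symmetrically on the right. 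For $i=\leftrightarrow$ no such absorption is available in the absence of s-unitality, so the factors $(\RR\cup\{1\})$ are retained verbatim as in the statement.

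There is no genuine obstacle here: all of the depth has been packaged into Corollaries~\ref{corollary: which conditions hold for rings} and~\ref{corollary: 0=00 in rings}, so the deduction is essentially mechanical. The only points requiring care are the transfer of s-unitality to the monster and the elementary Minkowski manipulations that collapse unions such as $\RR\cdot\RRaT_A\cup\RRaT_A$, both of which are routine.
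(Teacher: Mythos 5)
Your proposal is correct and follows essentially the same route as the paper: Corollary~\ref{corollary: 3 steps} is read off from \nref{condition:X}{2\frac12} (resp.\ \nref{condition:X}{1\frac12}) supplied by Corollary~\ref{corollary: which conditions hold for rings}(3) (resp.\ (2)) together with $\RRiT_A=\RRiD_A$ from Corollary~\ref{corollary: 0=00 in rings}, the rest being the routine rewriting of $X_i$ that you carry out. The only microscopic elision is that for $X_\leftrightarrow=\RR\cdot\RRaT_A\cdot\RR$ you also need $\RR\cdot\RRaT_A\cup\RRaT_A\cdot\RR\subseteq\RR\cdot\RRaT_A\cdot\RR$, which follows by the same s-unitality trick you already use.
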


	\begin{remark}
		\label{remark:3_steps_one_sided}
		Using Remark~\ref{remark:equivalence_one-sided}, we can see that Corollary~\ref{corollary: which conditions hold for rings}(3) and Corollary~\ref{corollary: 3 steps}(1) have obvious analogues in the case when $R$ is only left [or right] s-unital. For instance, if $R$ is left s-unital, then
		\begin{itemize}
			\item
			$\RRaT_A + \RR \cdot \RRaT_A + \RR \cdot \RRaT_A = \RR^{00}_A = \RR^0_A$,
			\item
			$\RRaT_A + (\RR \cup \{1\})\cdot \RRaT_A \cdot (\RR \cup \{1\})+ (\RR \cup \{1\})\cdot \RRaT_A \cdot (\RR \cup \{1\})= \RR^{00}_A= \RR^0_A$.
		\end{itemize}
	\end{remark}

	Thus, by Fact~\ref{fact: conditions equivalent to 000=00} (or Proposition~\ref{proposition: equality of components and fin. many steps}), we obtain

	\begin{corollary}\label{corollary: 0=00=000}
		For an arbitrary $\emptyset$-definable ring $R$ which is (even one-sided) s-unital or of positive characteristic, $\RRiD_A=\RRiT_A = \RRiI_A$.
	\end{corollary}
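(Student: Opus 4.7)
The plan is to deduce this corollary directly from Corollary~\ref{corollary: 3 steps} together with Fact~\ref{fact: conditions equivalent to 000=00} (or Proposition~\ref{proposition: equality of components and fin. many steps}), with Remark~\ref{remark:3_steps_one_sided} handling the one-sided s-unital case. The corollary really has two parts: $\RRiT_A = \RRiD_A$ and $\RRiT_A = \RRiI_A$; both follow immediately from what has been established.

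For the first equality $\RRiT_A = \RRiD_A$, Corollary~\ref{corollary: 3 steps}(1) already asserts this explicitly under the s-unital hypothesis, and Corollary~\ref{corollary: 3 steps}(2) gives it under the positive characteristic hypothesis; the one-sided s-unital case is covered by Remark~\ref{remark:3_steps_one_sided}. (Alternatively, one may simply cite Corollary~\ref{corollary: 0=00 in rings}, which proves $\RRiT_A = \RRiD_A$ directly in all three settings.)

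For the second equality $\RRiT_A = \RRiI_A$, I would argue as follows. In each case, Corollary~\ref{corollary: 3 steps} (or Remark~\ref{remark:3_steps_one_sided}) writes $\RR^{00}_A$ as a finite sum of the shape $X_i^{+n}$ for some $i \in \{\leftarrow, \rightarrow, \leftrightarrow\}$ and some positive half-integer $n$ (specifically $n = 2\tfrac{1}{2}$ in the s-unital case and $n = 1\tfrac{1}{2}$ in the positive characteristic case). In particular, $\RRiT_A$ is generated by $X_i$ in finitely many steps. By Fact~\ref{fact: occurrence of 000}, the subgroup generated by $X_i$ is $\RRiI_A$, so $X_i^{+n} \subseteq \RRiI_A$, and combining with the trivial inclusion $\RRiI_A \subseteq \RRiT_A = X_i^{+n}$ we obtain $\RRiT_A = \RRiI_A$. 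Equivalently, this is the implication (ii) $\Rightarrow$ (iii) of Fact~\ref{fact: conditions equivalent to 000=00}, applied to the instance of condition (ii) that Corollary~\ref{corollary: 3 steps} has just verified.

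There is no genuine obstacle: all nontrivial content is already contained in Corollary~\ref{corollary: 3 steps}, and this final corollary is purely a bookkeeping step that packages the three components $\RRiD_A$, $\RRiT_A$, $\RRiI_A$ together.
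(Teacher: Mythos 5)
Your proposal is correct and follows essentially the same route as the paper: the paper also deduces Corollary~\ref{corollary: 0=00=000} directly from Corollary~\ref{corollary: 3 steps} (with Remark~\ref{remark:3_steps_one_sided} for the one-sided s-unital case and Corollary~\ref{corollary: 0=00 in rings} giving $\RRiT_A=\RRiD_A$), invoking Fact~\ref{fact: conditions equivalent to 000=00} (equivalently Proposition~\ref{proposition: equality of components and fin. many steps}) to pass from generation of $\RRiI_A$ by $X_i$ in finitely many steps to $\RRiI_A=\RRiT_A$. No gaps; it is indeed just the bookkeeping step you describe.
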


	\begin{question}\label{question: finitely many steps for all rings}
		Do the last two corollaries (except $\RRiD_A=\RRiT_A$) hold for all rings?
	\end{question}

	\begin{question}\label{question: 2 steps}
		Are 2 or even $1\frac12$ (instead of $2\frac12$) steps enough in
		Corollary~\ref{corollary: 3 steps}(1)?
	\end{question}

	In Examples~\ref{example: Z[X]} and \ref{example:independent_functionals}, we will see that one cannot decrease the number of steps to $1$, even for commutative, unital rings of positive characteristic.

	\section{Commutative, finitely generated, unital rings}\label{section: comm. fin. gen. rings}

	We will give a different (and easier) proof of the following weaker version of Theorem~\ref{theorem: main theorem} for commutative, finitely generated, unital rings.

	\begin{proposition}\label{proposition: Tomek}
		Let $R$ be a $\emptyset$-definable, unital, commutative, finitely generated ring, say generated by $1$ and a set $A$ of cardinality $n$.
		Then for every finite index subgroup $H$ of $(R,+)$, the $(n+1)$-fold sum $(R\cdot H)^{+(n+1)}$ contains an $A$-definable ideal of $R$ of finite index.
	\end{proposition}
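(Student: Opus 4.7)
The plan is to proceed by induction on $n$, the number of generators. In the base case $n = 0$, the ring $R$ is a quotient of $\Z$, so every additive subgroup is automatically a two-sided ideal (since multiplication by any element of $R$ is just multiplication by an integer). Hence $H$ itself is a finite-index ideal, and it is $\emptyset$-definable: in $\Z$ it equals $m\Z$ for a specific $m$, given by the formula $\exists y\,(x = my)$, while in a finite quotient $\Z/k\Z$ every subset is $\emptyset$-definable. Since $H \subseteq R \cdot H = (R \cdot H)^{+1}$, the conclusion follows.

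For the inductive step, let $R$ be generated by $1$ and $A = \{a_1, \ldots, a_n\}$. Set $R' = \Z[a_1, \ldots, a_{n-1}]$ and $A' = A \setminus \{a_n\}$, and let $H' = H \cap R'$; this is a finite-index subgroup of $R'$, since $R'/H'$ embeds into $R/H$. By the inductive hypothesis applied to $R'$, $H'$, $A'$, there is an $A'$-definable, finite-index ideal $I'$ of $R'$ contained in $(R' \cdot H')^{+n}$, and hence in $(R \cdot H)^{+n}$ as well.

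The crux is to produce an $A$-definable, finite-index ideal $I$ of the full ring $R$ contained in $(R \cdot H)^{+(n+1)}$, using $I'$ together with only one additional summand of $R \cdot H$. Writing $a = a_n$, I would exploit that $R/H$ is finite: the sequence $(a^k \bmod H)_{k \geq 0}$ is eventually periodic in the finite group $R/H$, so there is a nonzero integer polynomial $p$ of degree at most $|R/H|$ with $p(a) \in H$. This ``almost-integrality'' relation lets one refine $I'$ to an $A$-definable finite-index subgroup $I'' \subseteq I'$ which is closed under multiplication by $a$ modulo $H$, and then $I \coloneqq R \cdot I''$ becomes an $R$-ideal whose excess over $I'$ is captured by the single extra summand of $R \cdot H$.

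The main obstacle I anticipate is keeping the index of $I$ finite in the full ring $R$. Naively, the $R$-ideal generated by a finite-index subgroup of $R'$ can fail to have finite index in $R$: for instance, with $R' = \Z$, $I' = 2\Z$, and $R = \Z[X]$, one gets $R I' = 2\Z[X]$, which has infinite index in $\Z[X]$. The polynomial relation $p(a) \in H$ is precisely the structural input needed to force finite index in the quotient $R/I$, but weaving it together with $A$-definability (noting that $H$ itself is not assumed $A$-definable) while using only one extra summand is the delicate part of the argument.
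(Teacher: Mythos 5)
You have the right raw ingredient (pigeonhole in the finite group $R/H$ giving a relation $a^{l}-a^{k}\in H$, $k<l$), but the inductive step is not actually carried out, and the construction you sketch does not work as stated. You propose to take $I''\subseteq I'$ ``closed under multiplication by $a$ modulo $H$'' and set $I\coloneqq R\cdot I''$; but, as your own example $R'=\Z$, $I'=2\Z$, $R=\Z[X]$ shows, the $R$-ideal generated by a finite-index subgroup of $R'$ generally has infinite index, and nothing in the sketch explains how ``closure modulo $H$'' repairs this. The missing step is to spend the one extra summand on the relation itself: put $I\coloneqq R\cdot I' + R\cdot(a^{l}-a^{k})$. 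This is an ideal (commutativity), it lies in $(R\cdot H)^{+(n+1)}$ because $R\cdot I'\subseteq(R\cdot H)^{+n}$ (elements of $I'$ are sums of $n$ products $r'h'$ with $h'\in H'\subseteq H$) and $a^{l}-a^{k}\in H$, and it has finite index because $X^{l}-X^{k}$ is \emph{monic}: division with remainder shows every element of $R=R'[a]$ is congruent mod $R\cdot(a^{l}-a^{k})$ to $\sum_{j<l}r_j'a^{j}$ with $r_j'\in R'$, whence $[R:I]\leq[R':I']^{l}$. Monicity is essential here; an arbitrary ``nonzero integer polynomial $p$ with $p(a)\in H$'', which is all you assert, would not give finite index. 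Since you explicitly flag exactly this finite-index step as an unresolved obstacle, the proof is incomplete at its crux.

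There is also a definability problem in the setup of the induction: $R'=\Z[a_1,\dots,a_{n-1}]$ is a countable increasing union of definable sets and is in general \emph{not} a definable subring of $R$ in the ambient structure $M$, so the proposition's inductive hypothesis (which presupposes a definable ring) cannot be invoked for $R'$, and ``$A'$-definable ideal of $R'$'' does not literally make sense in $M$. The repair is to run the induction purely algebraically, keeping track that the ideal produced is the one generated by finitely many explicit elements of $H$ of the form $m\cdot 1$ and $a_i^{l_i}-a_i^{k_i}$ (each a term in $A\cup\{1\}$, so the resulting ideal $R\cdot(m\cdot 1)+\sum_i R\cdot(a_i^{l_i}-a_i^{k_i})$ is $A$-definable). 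Once unwound, this is exactly the paper's proof, which dispenses with induction: it picks these $n+1$ elements of $H$ directly (thickness/pigeonhole), notes the ideal they generate sits inside $(R\cdot H)^{+(n+1)}$, and proves finite index by reducing to $\Z[X_1,\dots,X_n]$ and dividing with remainder by the monic polynomials $X_i^{l_i}-X_i^{k_i}$ and reducing integer coefficients modulo $m$. Your base case is fine, but as written the argument has a genuine gap at the inductive step and in the definability bookkeeping.
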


	In fact, we prove more:
	\begin{proposition}\label{proposition: Tomek2}
		Let $R$ be a $\emptyset$-definable, unital, commutative, finitely generated ring, say generated by $1$ and a set of cardinality $n$ contained in $A$.
		\begin{enumerate}
			\item Then for every thick subset $D$ of $(R,+)$, $(R\cdot D)^{+(n+1)}$ contains an $A$-definable ideal of $R$ of finite index. Thus, all conditions \nref{condition:X}{n+1} -- \nref{condition:X'_R0}{n+1} from Definition~\ref{definition:conditions} hold over $A$.
			\item If $R$ is additionally of positive characteristic, then we have the same conclusion with $n$ in place of $n+1$.
		\end{enumerate}
	\end{proposition}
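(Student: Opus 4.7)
The plan is to prove part (1) by reduction to part (2) via Lemma \ref{lemma:s-unital_thick_kR}, and prove part (2) by induction on $n$.

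For the reduction: since $R$ is unital, Lemma \ref{lemma:s-unital_thick_kR} yields $N>0$ with $NR\subseteq R\cdot D$. The quotient $R'\coloneqq R/NR$ is a commutative, unital ring of positive characteristic dividing $N$, still generated by $1$ and the images of the $n$ generators of $R$, and the image $\bar D$ of $D$ is thick in $R'$. Applying part (2) to $R'$ gives a finite-index ideal $\bar I\subseteq (R'\cdot\bar D)^{+n}$. Lifting back to $R$ yields a finite-index ideal $I\supseteq NR$ of $R$ with
\[
I\subseteq (R\cdot D)^{+n}+NR\subseteq (R\cdot D)^{+n}+R\cdot D=(R\cdot D)^{+(n+1)}.
\]
Every such ideal $I$ is automatically $A$-definable by the Hilbert basis theorem: $R$ is Noetherian, so $I$ is finitely generated by elements of $R$, each of which is a polynomial in generators of $R$ that lie in $A$.

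For part (2), I would proceed by induction on $n$. The base case $n=0$ is immediate: $R$ is a finite quotient of $\Z/N$, and $\{0\}\subseteq (R\cdot D)^{+0}$ is a finite-index ideal. For the inductive step from $n$ to $n+1$, given $R$ generated by $1$ and $r_1,\ldots,r_{n+1}$, consider $R'\coloneqq R/r_{n+1}R$, generated by $1$ and the images of $r_1,\ldots,r_n$, in which the image $\bar D$ of $D$ is thick. By the inductive hypothesis applied to $R'$, there is a finite-index ideal $\bar I\subseteq (R'\cdot\bar D)^{+n}$. Pulling back to $R$ yields a finite-index ideal $I\supseteq r_{n+1}R$ with $I\subseteq (R\cdot D)^{+n}+r_{n+1}R$.

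The main obstacle is then to absorb the extra summand $r_{n+1}R$ into a single additional application of $R\cdot D$, showing $r_{n+1}R\subseteq R\cdot D$. For this, the plan is to exploit that $(R,+)$ has finite exponent in positive characteristic, so Corollary \ref{corollary: finite exponent implies 00=0} combined with a compactness argument permits replacing $D$ by an $A$-definable finite-index additive subgroup $H$ contained in $D$; then the commutativity of $R$ and the action of multiplication by $r_{n+1}$ on $(R,+)$ should yield $r_{n+1}R\subseteq R\cdot H\subseteq R\cdot D$, completing the inductive step and giving the bound $n+1$ in part (2). The technical heart of the argument lies in this final inclusion, which requires analyzing how the image $r_{n+1}H$ sits inside the additive subgroup $r_{n+1}R$ and how the $R$-module structure allows us to cover $r_{n+1}R$ using $R\cdot H$ in a single step.
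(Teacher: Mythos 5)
Your reduction of (1) to (2) via Lemma~\ref{lemma:s-unital_thick_kR} is fine (it mirrors the paper's observation that the unital case just needs one extra generator $k\times 1$ of the ideal), and the definability remarks are harmless since every element of $R$ lies in $\operatorname{dcl}(A)$. The genuine gap is in the inductive step of (2), and it is not merely a technicality left open: the inclusion you need, $r_{n+1}R\subseteq R\cdot D$ (or $R\cdot H$ for a finite index subgroup $H\subseteq D$), is simply false in general. Take $R\coloneqq \Z_2[X]$ (characteristic $2$, generated by $1$ and $X$) and $D\coloneqq H\coloneqq\{P\in R: P(1)=0\}$, an index-$2$ additive subgroup, hence thick. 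Every element of $R\cdot H$ is of the form $Q\cdot P$ with $P(1)=0$ and therefore vanishes at $1$, so $X\notin R\cdot H$ while $X\in XR$; thus $XR\not\subseteq R\cdot H$, and your attempt to absorb the pulled-back summand $r_{n+1}R$ in a single extra application of $R\cdot D$ cannot work. (Note the example also shows that passing from $D$ to a finite-index subgroup does not help; moreover, that reduction itself is shaky, since thickness of $D$ only guarantees that a bounded \emph{sumset} of $\bar D$ contains $\RRaT_A$, not $\bar D$ itself, so Corollary~\ref{corollary: finite exponent implies 00=0} plus compactness does not directly produce $H\subseteq D$.) A further structural problem is that even if you replace $r_{n+1}R$ by an ideal that genuinely sits in one step of $R\cdot D$, the quotient is no longer generated by $n$ elements, so the induction on the number of generators does not close.

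The paper's proof avoids all of this: thickness is used to pick, for each generator $a_i$, an element $a_i^{l_i}-a_i^{k_i}\in D$ (and $k\times 1\in D$ in the unital case), and one takes the ideal $I$ generated by these $n+1$ (resp.\ $n$) elements. Since $R$ is commutative and unital, $I$ is the sum of the $n+1$ principal ideals, each contained in $R\cdot D$, giving $I\subseteq (R\cdot D)^{+(n+1)}$ in one stroke; finite index is then proved by lifting to $\Z[X_1,\dots,X_n]$ and dividing with remainder by the monic polynomials $X_i^{l_i}-X_i^{k_i}$, which bounds the degree in each variable and the coefficients modulo $k$. The point your argument misses is that the elements one can feed into a single step of $R\cdot D$ are multiples of elements \emph{of $D$} (such as $a_i^{l_i}-a_i^{k_i}$), not multiples of the generators $r_i$ themselves, and the finite-index conclusion must then come from an algebraic argument (division by monic polynomials), not from quotienting by $r_{n+1}R$.
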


	\begin{proof}
		(1) By Proposition~\ref{proposition: equivalence of finite numbers of steps}, the last claim follows from the first one. To show the first claim, we can assume that $A=\{a_1,\dots,a_n\}$ is a set of generators. Since $D$ is thick, for every $i \in \{1,\dots,n\}$ there exist natural numbers $k_i<l_i$ such that $a_i^{l_i} - a_i^{k_i} \in D$;
		for the same reason, there exists a non-zero $k \in \Z$ such that $k\times 1 \coloneqq 1^{+k}\in D$. We claim that the $A$-definable ideal
		\[
		I\coloneqq (k\times 1) +({a_1}^{l_1} -{a_1}^{k_1}) + \dots + ({a_n}^{l_n} - {a_n}^{k_n})
		\]
		is contained in $(R\cdot D)^{+(n+1)}$ and has finite index. Only the latter statement requires an explanation. Since $R$ is a homomorphic image of the ring of polynomials $\Z[X_1,\dots,X_n]$, it is enough to prove it for $R=\Z[X_1,\dots,X_n]$ and $A\coloneqq \{X_1,\dots,X_n$\}. Using an inductive argument and the fact that in the ring of polynomials in one variable over a commutative, unital ring we can divide with reminders by monic polynomials (the monic polynomials that we are using here are $X_i^{l_i} - X_i^{k_i}$), we easily get that every polynomial in $\Z[X_1,\dots,X_n]$ is congruent modulo $I$ to some polynomial in $\Z_k[X_1,\dots,X_n]$ of the $X_i$-degree less than $l_i$ for every $i \in \{1,\dots,n\}$. Hence, $R/I$ is finite.

		(2) If $R$ is of positive characteristic, we do not need the term $(k \times 1)$ in $I$ in the above argument, so $n$ steps is enough.
	\end{proof}

	Note that in the special case of $n=1$, Proposition~\ref{proposition: Tomek2}(1) decreases the number of steps obtained in Corollary~\ref{corollary: 3 steps}(1) [resp. (2)] from $2\frac12$ to $2$ [resp. from $2$ to $1$], but, of course, in a much more narrow class of rings.

	\section{Finitely generated rings}\label{section: fin. gen. rings}

	Here, we give a positive answer to Questions~\ref{question: introduction, finite number of steps for all rings} and \ref{question: finitely many steps for all rings}
	for arbitrary finitely generated rings,
	but with the number of steps dependent on the number of generators and larger than the number given by Corollary~\ref{corollary: 3 steps} (for s-unital or positive-characteristic rings).

	\begin{theorem}\label{theorem: bound for fin gen. rings}
		If $R =\langle r_0,\dots,r_{n-1} \rangle$ is a $\emptyset$-definable, finitely generated ring, then \nref{condition:X}{n+1+\frac12} holds for any $A$ containing $r_0,\dots, r_{n-1}$.
		In fact, it is enough to assume that $R = (R +\Z)r_0 + \dots + (R + \Z)r_{n-1}$ and $Rr_0 + \dots + Rr_{n-1}=r_0R + \dots + r_{n-1}R$.
	\end{theorem}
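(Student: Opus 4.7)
The plan is to follow the two-stage strategy of Theorem~\ref{theorem: main theorem} and Corollary~\ref{corollary: 3 steps}(1), with the finite generation replacing the role of s-unitality. Stage~1: for every $A$-definable finite-index additive subgroup $H$ of $(R,+)$, I construct the smaller $A$-definable finite-index subgroup $K = H \cap \bigcap_{i=0}^{n-1} g_{r_i}^{-1}(H)$, where $g_r(x) = rx$, so that $r_i K \subseteq H$ for every generator $r_i$. Applying Theorem~\ref{theorem: main theorem} to $K$ yields an $A$-definable finite-index two-sided ideal $I \subseteq K + R \cdot K$. The first hypothesis $R = \sum_{i=0}^{n-1}(R+\Z)r_i$ lets us write each $r \in R$ as $\sum_i (s_i + k_i) r_i$ with $s_i \in R$, $k_i \in \Z$; for $k \in K$ this gives
$$rk = \sum_i s_i (r_i k) + \sum_i k_i (r_i k) \in (R \cdot H)^{+n} + H,$$
since each $r_i k \in H$ makes $s_i (r_i k) \in R \cdot H$ and $k_i (r_i k) \in \Z H = H$. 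Hence $I \subseteq H + (R\cdot H)^{+n}$, establishing condition \nref{condition:group_R00}{n+\frac{1}{2}}.

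Stage~2 is the upgrade to \nref{condition:X}{n+1+\frac{1}{2}}, that is, $\RRiI_A = \RRaT_A + X_\leftarrow^{+(n+1)}$ with $X_\leftarrow = (\bar R \cup \{1\}) \RRaT_A$. The inclusion $\supseteq$ is clear; for $\subseteq$ it suffices to show that $S := \RRaT_A + X_\leftarrow^{+(n+1)}$ is an $A$-invariant, bounded-index, additively closed left ideal of $\bar R$, since then $\RRiI_A \subseteq S$ by minimality. Using $\bar R \cdot X_\leftarrow \subseteq X_\leftarrow$, both $\bar R \cdot S$ and $S + S$ lie in $X_\leftarrow^{+m}$ for some $m \leq 2n+2$, and a short induction reduces both closure properties to the single key inclusion
$$X_\leftarrow^{+(n+2)} \subseteq \RRaT_A + X_\leftarrow^{+(n+1)}.$$

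The main obstacle is this key inclusion. In the s-unital case, Proposition~\ref{proposition: equivalence of finite numbers of steps}(4) bridges from \nref{condition:group_R00}{n+\frac{1}{2}} to the $\RRaT_A$-version using Lemma~\ref{lemma:s-unital_thick_kR} to produce a finite-index subgroup inside any thick superset of $\RRaT_A$; this route is not available here, because $\RRaT_A$ need not be an intersection of $A$-definable finite-index subgroups. The replacement is to combine Stage~1 with the finite generation directly: expanding an element of $X_\leftarrow^{+(n+2)}$ via the decomposition provided by the first hypothesis, and using the second hypothesis $Rr_0 + \cdots + Rr_{n-1} = r_0R + \cdots + r_{n-1}R$ to reroute between one-sided products, allows the $(n+2)$-th summand of $X_\leftarrow$ to be absorbed into an element of $\RRaT_A$ together with a carefully-repackaged $(n+1)$-fold sum in $X_\leftarrow$; saturation then promotes the finite-index decomposition from Stage~1 to a decomposition with the required factors in $\RRaT_A$. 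The second hypothesis is precisely what allows the argument to go through uniformly for all $i \in \{\leftarrow,\rightarrow,\leftrightarrow\}$, and the single extra $X_\leftarrow$-slot is the exact cost of this bridge, yielding the $n+1+\tfrac12$ bound.
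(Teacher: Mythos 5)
Your Stage~1 is correct and is in fact a neat argument (different from anything in the paper) showing that every finitely generated ring satisfies condition \nref{condition:group_R00}{n+\frac12}: the intersection $K=H\cap\bigcap_i g_{r_i}^{-1}(H)$ is $A$-definable of finite index because $r_i\in A$, and the decomposition $r=\sum_i(s_i+k_i)r_i$ does push $K+R\cdot K$ into $H+(R\cdot H)^{+n}$. Your reduction of Stage~2 to the single inclusion $X_\leftarrow^{+(n+2)}\subseteq \RRaT_A+X_\leftarrow^{+(n+1)}$ is also sound. But that inclusion is the entire content of the theorem, and your proposal does not prove it. The sentence ``saturation then promotes the finite-index decomposition from Stage~1 to a decomposition with the required factors in $\RRaT_A$'' is exactly the step that fails: compactness converts statements about $A$-definable finite-index subgroups into statements about $(\RR,+)^{0}_A$ and $\RR^{0}_A$, never into statements about $\RRaT_A$ or $\RRiT_A$. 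Passing from condition \nref{condition:group_R00}{m} to condition \nref{condition:X}{m'} is available only for s-unital rings or rings of positive characteristic (Proposition~\ref{proposition: equivalence of finite numbers of steps}(3),(4)), neither of which holds here; for general rings this passage is precisely the open Question~3.8 of the paper, and the example $X\Z[X]$ shows that finite-index information alone cannot control $\RRaT_A$ (there \nref{condition:group_R00}{1\frac12} holds while \nref{condition:def_R00}{m} fails for all $m$ because $(\RR,+)^{00}_A\neq(\RR,+)^{0}_A$). So as written, Stage~2 is a restatement of the goal plus unspecified ``absorbing/rerouting/repackaging''.

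The paper's mechanism for crossing this gap is concrete and quite different. Set $S\coloneqq Rr_0+\dots+Rr_{n-1}$, a two-sided ideal. For an $A$-definable thick $D$ with $\RRaT_A\subseteq\bar D$, thickness gives $k_ir_i\in D$, whence $kS\subseteq (R\cdot D)^{+n}$ for $k=k_0\cdots k_{n-1}$; now $S/kS$ has \emph{positive characteristic}, so the finite-index machinery (condition \nref{condition:def_R00}{1\frac12}, valid in positive characteristic by Corollary~\ref{corollary: which conditions hold for rings}(2)) applies there and pulls back to an $A$-definable finite-index ideal of $S$ inside $D+(R\cdot D)^{+(n+1)}$. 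Compactness then yields $\RRaT_A+\bar S^{0}_A\subseteq \RRaT_A+(\bar R\cdot\RRaT_A)^{+(n+1)}$. The remaining work is to show that $\RRaT_A+\bar S^{0}_A$ is a left ideal of $\bar R$ (so that it contains $\bar R^{000}_A$, forcing equality): this uses both hypotheses, via $r_0\RRaT_A+\dots+r_{n-1}\RRaT_A=(\bar S,+)^{00}_A$, the identity $\bar S^{0}_A=(\bar S,+)^{0}_A+\bar S\cdot(\bar S,+)^{0}_A$ from Corollary~\ref{corollary: which conditions hold for rings}(1), and an expansion of the structure by a sort for $(\Z,+)$ to handle the $\bar\Z$-multiples coming from $R=(R+\Z)r_0+\dots+(R+\Z)r_{n-1}$. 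If you want to salvage your outline, Stage~2 needs to be replaced by an argument of this kind (reduction to a positive-characteristic quotient of $S$, plus a proof that $\RRaT_A+\bar S^{0}_A$ is an ideal), not by an appeal to saturation.
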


	Note that indeed $R =\langle r_0,\dots,r_{n-1} \rangle$ implies the assumptions after ``In fact'', and that for a commutative $R$ the last assumption trivially holds.

	Let $R$ be as above (i.e.\ satisfies the two assumptions after ``In fact''). Put $S\coloneqq Rr_0 + \dots + Rr_{n-1}$. This is a two-sided ideal of $R$.

	\begin{lemma}
		Let $D$ be an $A$-definable, thick subset of $R$ such that $\RRaT_A \subseteq \bar D$. Then $D + (R\cdot D)^{+(n+1)}$ contains an $A$-definable, finite index ideal of $S$.
	\end{lemma}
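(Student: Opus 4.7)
The plan is to combine Theorem~\ref{theorem: main theorem} with a careful exploitation of the finite generation hypothesis, building up a finite-index ideal of $S$ generator-by-generator while controlling the number of copies of $R\cdot D$ that appear.

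First, I would reduce to the case where $D$ is stable under multiplication by each of the generators. For each $i<n$, the maps $x\mapsto r_ix$ and $x\mapsto xr_i$ are additive endomorphisms of $(R,+)$, and the preimage of a symmetric $m$-thick set under an additive group homomorphism is again symmetric and $m$-thick. Hence the intersection
\[
D^{*} := D\,\cap\,\bigcap_{i<n}\{x\in R:r_ix\in D\}\,\cap\,\bigcap_{i<n}\{x\in R:xr_i\in D\}
\]
is an $A$-definable thick subset of $R$ with $r_iD^{*}\cup D^{*}r_i\subseteq D$ for every $i<n$, so in particular $\overline{D^{*}}\supseteq\RRaT_A$.

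Next, I would build the ideal of $S$ itself. The algebraic leverage is that $S=\sum_iRr_i=\sum_ir_iR$, so the left- and right-$R$-module structures on $S$ are ``the same'' generating data. Using thickness of $D^{*}$, for each $i$ the set $G_i:=\{x\in R:xr_i\in D^{*}\}$ is $A$-definable and thick; I would take an $A$-definable finite-index subgroup $H\leq(R,+)$ produced from this data (such as the subgroup inside $\bigcap_i G_i$ arising by applying Theorem~\ref{theorem: main theorem} to an auxiliary finite-index subgroup of $R$ designed so that each $Hr_i$ lands inside $D$ up to a controlled correction). Theorem~\ref{theorem: main theorem} then supplies an $A$-definable, finite-index two-sided ideal $I_0\subseteq H+R\cdot H$ of $R$; intersecting with $S$ gives an $A$-definable ideal of $R$ contained in $S$ of finite index in $S$, since $S/(I_0\cap S)\hookrightarrow R/I_0$ is finite. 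I would then verify that $I_0\cap S$ can be routed through the set $D+\sum_{i<n}R\cdot(D^{*}r_i)+R\cdot D\subseteq D+(R\cdot D)^{+(n+1)}$, where the $n$ central copies of $R\cdot D$ correspond one-per-generator to the decomposition $S=\sum Rr_i$, the initial $D$ accounts for the ``$+\tfrac12$'' step (capturing the $\Z r_i$ part of $R=(R+\Z)r_0+\dots+(R+\Z)r_{n-1}$), and the extra $R\cdot D$ is the cost of the application of Theorem~\ref{theorem: main theorem}.

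The main obstacle is the bookkeeping of steps. Naively, each generator might demand two copies of $R\cdot D$ (one for the left action, one for the right), and Theorem~\ref{theorem: main theorem} might add more on top; the hypothesis $\sum Rr_i=\sum r_iR$ is precisely what lets one fuse the left and right contributions, so that the count closes at $n+1$ rather than $2n+1$. A second delicate point is ensuring that the object produced is genuinely an ideal of $S$ of finite index, rather than merely an additive subgroup of $S$ of finite index or an ideal of $R$ that happens to lie in $S$ but may have infinite index in it; this is handled by taking the $R$-ideal $I_0$ from Theorem~\ref{theorem: main theorem} and intersecting with $S$, which automatically preserves both finite index in $S$ and the two-sided ideal property inherited from $R$.
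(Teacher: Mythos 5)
Your opening reduction is fine (preimages of thick sets under the additive endomorphisms $x\mapsto r_ix$, $x\mapsto xr_i$ are thick, and $\overline{D^*}\supseteq\RRaT_A$ because $r_i\RRaT_A\subseteq\RRaT_A$ as $r_i\in A$), but the heart of your argument has a genuine gap: you never produce the finite index subgroup $H$ of $(R,+)$ that you feed into Theorem~\ref{theorem: main theorem}. A thick set such as $\bigcap_i G_i$ need not contain \emph{any} finite index subgroup, and in characteristic zero (where $(\RR,+)^{00}_A\subsetneq(\RR,+)^0_A$ is possible) there is in general no finite index subgroup $H$ with $Hr_i$ inside $D$ ``up to a controlled correction'' --- finite index subgroups of $(R,+)$ are simply unrelated to the Bohr-type set $D$. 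Theorem~\ref{theorem: main theorem} goes in the opposite direction (from a finite index subgroup to an ideal inside $H+R\cdot H$); it cannot manufacture a subgroup tied to $D$, so the phrase ``designed so that each $Hr_i$ lands inside $D$'' is exactly the unproved crux, not a bookkeeping detail.

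The missing idea is the absorption step that creates positive characteristic: by thickness choose $k_i\geq 1$ with $k_ir_i\in D$, set $k=k_0\cdots k_{n-1}$, and observe that every $s=\sum_i r_i'r_i\in S$ gives $ks=\sum_i\frac{k}{k_i}r_i'\cdot(k_ir_i)\in(R\cdot D)^{+n}$, i.e.\ $kS\subseteq(R\cdot D)^{+n}$. Only after passing to the quotient $S/kS$, which has positive characteristic, do finite index subgroups related to $D$ exist (Corollary~\ref{corollary: finite exponent implies 00=0}), and then condition \nref{condition:def_R00}{1\frac12} for positive characteristic rings (Corollary~\ref{corollary: which conditions hold for rings}(2), which does rest on Theorem~\ref{theorem: main theorem}) applied to the thick set $(S\cap D)/kS$ yields an $A$-definable finite index ideal of $S/kS$ inside $(S\cap D)/kS+(S/kS)\cdot\bigl((S\cap D)/kS\bigr)$; its preimage lies in $(S\cap D)+S\cdot(S\cap D)+kS\subseteq D+(R\cdot D)^{+(n+1)}$. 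Note also that this route uses only $n$ copies of $R\cdot D$ for $kS$ plus one for $S\cdot(S\cap D)$, so no left/right ``fusion'' via $\sum Rr_i=\sum r_iR$ is needed at this stage (that hypothesis is used later, when upgrading from ideals of $S$ to a left ideal of $\RR$); your final intersection argument $S/(I_0\cap S)\hookrightarrow R/I_0$ is harmless but never reached because $I_0$ with the required containments is not constructed.
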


	\begin{proof}
		For $i=0,\dots,n-1$, let $k_i \in \mathbb{N} \setminus \{0\}$ be such that $k_ir_i \in D$ (by thickness). Then for $k\coloneqq k_0 \dots k_{n-1}$ we have that $kS \subseteq (R\cdot D)^{+n}$, because every $s \in S$ can be written as $\sum_i r_i' r_i$ (for some $r_i' \in R$) and then $ks =\sum_i k r_i' r_i = \sum_i \frac{k}{k_i}r_i' \cdot k_ir_i \in (R\cdot D)^{+n}$.

		On the other hand, $S/kS$ is of positive characteristic and $(S \cap D)/kS$ is an $A$-definable, thick subset of $S/kS$ such that $(\bar S \cap \bar D)/k\bar S$ contains $(\bar S \cap \RRaT_A)/k\bar S \geq (\bar S,+)^{00}_A/k\bar S = (\bar S/k\bar S,+)^{00}_A$.
		Hence, as \nref{condition:def_R00}{1\frac12} holds for rings of positive characteristic (by Corollary~\ref{corollary: which conditions hold for rings}(2)), we get that $(S \cap D)/kS + S/kS \cdot (S \cap D)/kS$ contains an $A$-definable, finite index ideal of $S/kS$. Then the preimage of this ideal by the quotient map $S \to S/kS$ is an $A$-definable, finite index ideal of $S$ contained in $(S \cap D) + S \cdot (S \cap D) +kS \subseteq D + S \cdot D + kS \subseteq D + (R\cdot D)^{+(n+1)}$.
	\end{proof}
	Since any finite index, $A$-definable ideal of $S$ contains $\bar S^{0}_A$, we obtain the following by compactness.
	\begin{corollary}\label{corollary: inclusion}
		$\RRaT_A+ \bar S^{0}_A \subseteq \RRaT_A + (\bar R \cdot \RRaT_A)^{+(n+1)}$.
	\end{corollary}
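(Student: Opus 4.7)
The plan is to deduce the inclusion from the preceding lemma via a routine saturation argument. Since $\RRaT_A$ is a (type-definable) subgroup of $(\bar R,+)$, we have $\RRaT_A+\RRaT_A=\RRaT_A$, so the statement of the corollary reduces to proving
$$\bar S^{0}_A\;\subseteq\;\RRaT_A+(\bar R\cdot\RRaT_A)^{+(n+1)};$$
adding $\RRaT_A$ to both sides then gives back the full claim. Note also that $S=Rr_0+\dots+Rr_{n-1}$ is $A$-definable (as $r_0,\dots,r_{n-1}\in A$), so $\bar S^{0}_A$ is contained in the interpretation of every $A$-definable, finite index ideal of $S$; and, by the remark recalled in the preliminaries, $\RRaT_A$ is the intersection of its $A$-definable symmetric (hence thick) supersets, a family that is closed under finite intersections, since an intersection of $A$-definable symmetric sets containing the subgroup $\RRaT_A$ is again $A$-definable, symmetric and a superset of $\RRaT_A$.

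Now fix $x\in\bar S^{0}_A$. For each $A$-definable thick $D\supseteq\RRaT_A$, the preceding lemma produces an $A$-definable, finite index ideal of $S$ contained in $D+(R\cdot D)^{+(n+1)}$; its interpretation in $\bar R$ contains $\bar S^{0}_A$, so there exist $d_0,\dots,d_{n+1}\in\bar D$ and $\bar r_1,\dots,\bar r_{n+1}\in\bar R$ with $x=d_0+\bar r_1 d_1+\dots+\bar r_{n+1}d_{n+1}$. Consider the partial type $\Pi(y_0,\dots,y_{n+1},z_1,\dots,z_{n+1})$ over $A\cup\{x\}$ asserting that $y_i\in\RRaT_A$ for every $i$ together with $x=y_0+z_1 y_1+\dots+z_{n+1}y_{n+1}$. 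Any finite fragment of $\Pi$ mentions only finitely many $A$-definable thick supersets $D_1,\dots,D_k$ of $\RRaT_A$; taking $D\coloneqq D_1\cap\dots\cap D_k$, still $A$-definable and thick by the directedness noted above, the representation just produced realizes this fragment. Saturation of $\bar M$ then yields a realization of $\Pi$ itself, which is exactly the desired membership $x\in\RRaT_A+(\bar R\cdot\RRaT_A)^{+(n+1)}$.

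No substantial obstacle is anticipated: the preceding lemma carries all the content, and what remains is a standard compactness passage from $A$-definable thick neighborhoods of $\RRaT_A$ to $\RRaT_A$ itself, relying only on the directedness of the defining family and on the definition of $\bar S^{0}_A$ as the intersection of $A$-definable, finite index ideals of $\bar S$.
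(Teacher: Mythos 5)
Your proof is correct and is essentially the paper's own argument: the paper derives the corollary in one line from the preceding lemma, using that every $A$-definable, finite index ideal of $S$ contains $\bar S^{0}_A$ and then invoking compactness, which is exactly the saturation argument you spell out (including the standard reduction to the directed family of $A$-definable thick supersets of $\RRaT_A$ and the absorption $\RRaT_A+\RRaT_A=\RRaT_A$).
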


	\begin{lemma}
		$\bar S^0_A$ is a left ideal of $\bar R$.
	\end{lemma}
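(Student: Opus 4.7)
The idea is to exploit the decomposition $R = (R+\Z)r_0 + \dots + (R+\Z)r_{n-1}$ to reduce the problem of showing $\bar r \cdot \bar S^0_A \subseteq \bar S^0_A$ for arbitrary $\bar r \in \bar R$ to two separate cases: $\bar r \in \bar S$ (handled by the fact that $\bar S^0_A$ is a two-sided ideal of $\bar S$), and $\bar r = r_i$ for each generator (handled separately, using that $r_i$ is a parameter in $A$). The first assumption of Theorem~\ref{theorem: bound for fin gen. rings} and elementary equivalence give that every $\bar r \in \bar R$ can be written as $\bar r = \bar s + \sum_{i<n} k_i r_i$ with $\bar s \in \bar S$ and $k_i \in \Z$, so once both cases are done, linearity finishes the proof via $\bar r \cdot \bar S^0_A = \bar s \cdot \bar S^0_A + \sum_i k_i (r_i \cdot \bar S^0_A)$.

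The case $\bar r = \bar s \in \bar S$ is immediate, since $\bar S^0_A$ is a (two-sided) ideal of $\bar S$. For the case $\bar r = r_i$, I would argue as follows. Let $I$ be an arbitrary $A$-definable, finite index, two-sided ideal of $\bar S$; since $\bar S^0_A$ is the intersection of all such $I$, it suffices to show $r_i \cdot \bar S^0_A \subseteq I$. Consider the set
\[
L_i(I) \coloneqq \{\, s \in \bar S : r_i s \in I \,\}.
\]
Because $r_i \in A$, this set is $A$-definable; it is a subgroup of $(\bar S,+)$ because $s \mapsto r_i s + I$ is an additive homomorphism from $\bar S$ to the finite group $\bar S/I$, and so $L_i(I)$ is its kernel and has finite index. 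Crucially, $L_i(I)$ is a \emph{right} ideal of $\bar S$: if $s \in L_i(I)$ and $t \in \bar S$, then $r_i(st) = (r_i s)t \in I \cdot \bar S \subseteq I$. By Fact~\ref{fact: ideal component}, $\bar S^0_A$ is equally well the intersection of all $A$-definable, finite index right ideals of $\bar S$, so $\bar S^0_A \subseteq L_i(I)$, giving $r_i \cdot \bar S^0_A \subseteq I$ as required.

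The main subtlety is the observation in the last step: one might be tempted to check that $L_i(I)$ is a two-sided ideal, but it is typically only a right ideal (the left ideal calculation $r_i(ts) = (r_i t)s$ does not land in $I$ in general). This is exactly where Fact~\ref{fact: ideal component} — the equivalence of the left, right, and two-sided definitions of $\bar S^0_A$ — is used, and it is what makes the argument work. A small additional point is to verify that the decomposition $\bar R = \bar S + \sum_i \Z r_i$ passes from $R$ to $\bar R$; since $r_0, \dots, r_{n-1} \in M$, this is a first-order consequence of the assumed decomposition $R = \sum_i (R+\Z)r_i$ applied in the elementary extension.
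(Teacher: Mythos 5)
There is a genuine gap in your reduction step: the claim that every $\bar r \in \bar R$ can be written as $\bar s + \sum_{i<n} k_i r_i$ with $\bar s \in \bar S$ and \emph{standard} integers $k_i \in \Z$ does not follow by elementary equivalence. The hypothesis $R = (R+\Z)r_0 + \dots + (R+\Z)r_{n-1}$ quantifies over integer coefficients, so it is not a first-order statement in the language of $M$ and does not transfer verbatim to $\bar R$. Indeed it is false in general: take $R = X\Z[X]$ with the full structure and $r_0 = X$, so $S = RX = X^2\Z[X]$; the partial type $\{x \notin \bar S + kX : k \in \Z\}$ is finitely satisfiable (witnessed by $k'X$ for $k'$ outside any given finite set), so by saturation $\bar R \ne \bar S + \Z X$. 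The paper handles this by expanding $M$ with a sort for $(\Z,+)$ acting on $R$; after transfer one only gets $\bar r \in \bar S + \sum_i \bar\Z\, r_i$ with \emph{nonstandard} integer coefficients, and then an extra step is needed which your proposal omits: $\bar\Z \cdot \bar S^0_A \subseteq \bar S^0_A$. (This is proved by writing $\bar S^0_A = \bigcap_j \bar I_j$ for $A$-definable finite index ideals $I_j$ of $S$, noting $\Z \cdot I_j \subseteq I_j$, and transferring this first-order statement of the expanded language to get $\bar\Z \cdot \bar I_j \subseteq \bar I_j$.) Your concluding "linearity" computation also silently uses closure of $\bar S^0_A$ under standard integer multiples, which is harmless, but with the correct decomposition the coefficients are nonstandard and the above step is genuinely required.

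The remainder of your argument is sound, and your treatment of the generators is actually a nice alternative to the paper's: for $r_i \cdot \bar S^0_A \subseteq \bar S^0_A$ you use that $L_i(\bar I) = \{s \in \bar S : r_i s \in \bar I\}$ is an $A$-definable, finite index \emph{right} ideal of $\bar S$ and invoke Fact~\ref{fact: ideal component} to conclude $\bar S^0_A \subseteq L_i(\bar I)$; this is correct and more self-contained than the paper's route, which instead uses $r_i(\bar S,+)^0_A \subseteq (\bar S,+)^0_A$ together with the identity $\bar S^0_A = (\bar S,+)^0_A + \bar S \cdot (\bar S,+)^0_A$ from Corollary~\ref{corollary: which conditions hold for rings}(1). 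So the proof is repairable: work in the expansion by the $(\Z,+)$-sort, decompose $\bar r \in \bar S + \sum_i \bar\Z\, r_i$, add the argument for $\bar\Z \cdot \bar S^0_A \subseteq \bar S^0_A$, and keep your two cases as they stand.
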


	\begin{proof}
		Recall that $M$ is a structure in which $R$ is $\emptyset$-definable. Let $M'$ be the expansion of $M$ by the additional sort $(\Z,+)$ and the natural action of this sort on $R$. We choose a monster model $\bar M'$ so that $\bar M$ (i.e.\ the interpretation of $M$ in $\bar M'$) is a monster model of $\Th(M)$. All the components and definable subsets of $\bar R$ will be in the sense of the original language of $M$.

		Consider any $r \in \bar R$. We need to show that $r\bar S^0_A \subseteq \bar S^0_A$. By the assumption on $R$, $r \in \bar S + \sum_i \bar \Z r_i$. Since $\bar S^0_A$ is an ideal of $\bar S$, we have $\bar S\cdot \bar S^0_A \subseteq \bar S^0_A$, so it
		suffices to show (1) and (2), where:
		\begin{enumerate}[label=(\arabic*), nosep]
			\item $\bar \Z\cdot \bar S^0_A \subseteq \bar S^0_A$,
			\item $r_i \bar S^0_A \subseteq \bar S^0_A$ for all $i$.
		\end{enumerate}

		Ad 1. $\bar S^0_A = \bigcap_j \bar I_j$ for some $A$-definable, finite index ideals $I_j$ of $S$. Clearly $\Z \cdot I_j \subseteq I_j$, so $\bar \Z \cdot \bar I_j \subseteq \bar I_j$. This implies that $\bar \Z \cdot \bar S^0_A \subseteq \bar S^0_A$.

		Ad 2.
		As $r_i \in A$, the map $f_{r_i} \colon S \to S$ given by $f_{r_i}(x)\coloneqq r_ix$ is an $A$-definable group homomorphism. Hence, one easily gets $r_i (\bar S,+)^0_A \subseteq (\bar S,+)^0_A \subseteq \bar S^0_A$. Also, since \nref{condition:X'_R0}{1\frac12} holds for all rings (by Corollary~\ref{corollary: which conditions hold for rings}(1)), we have $\bar S^0_A = (\bar S,+)^0_A + \bar S \cdot (\bar S,+)^0_A$. Using these two observations, we get $ r_i \bar S^0_A = r_i ( (\bar S,+)^0_A + \bar S \cdot (\bar S,+)^0_A) = r_i (\bar S,+)^0_A + r_i \bar S \cdot (\bar S,+)^0_A \subseteq \bar S^0_A + \bar S \cdot (\bar S,+)^0_A \subseteq \bar S^0_A + \bar S \cdot \bar S^0_A = \bar S^0_A$.
	\end{proof}

	\begin{lemma}\label{lemma: ideal in R}
		$\RRaT_A+ \bar S^{0}_A$ is a left ideal of $\bar R$.
	\end{lemma}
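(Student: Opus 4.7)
The plan is to verify directly that $\bar R \cdot (\RRaT_A + \bar S^0_A) \subseteq \RRaT_A + \bar S^0_A$. Since the preceding lemma establishes that $\bar S^0_A$ is already a left ideal of $\bar R$, the factor $\bar R \cdot \bar S^0_A$ causes no trouble, and the task reduces to showing $\bar R \cdot \RRaT_A \subseteq \RRaT_A + \bar S^0_A$. Using the decomposition $\bar R = \bar S + \sum_i \bar \Z r_i$ from the hypothesis on $R$, any $r \in \bar R$ may be split as $s + \sum_i m_i r_i$ with $s \in \bar S$ and $m_i \in \bar \Z$; for $y \in \RRaT_A$, the term $\sum_i m_i r_i y$ stays inside $\RRaT_A$ by the same mix of $\bar\Z$-sort and $A$-definability arguments used in the previous lemma (each $r_i \in A$ makes $y \mapsto r_i y$ an $A$-definable homomorphism preserving every $A$-type-definable bounded-index subgroup of $(\bar R, +)$). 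So the entire content of the lemma is concentrated in proving the stronger claim $\bar S \cdot \RRaT_A \subseteq \bar S^0_A$.

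The heart of the argument is the special case $r_i \RRaT_A \subseteq \bar S^0_A$ for each generator $r_i$. By the standing assumption $\sum_j \bar R r_j = \sum_j r_j \bar R$, we have $r_i \bar R \subseteq \bar S$, so the $A$-definable map $y \mapsto r_i y$ from $\bar R$ to $\bar R$ actually lands in $\bar S$, and composing with the projection $\bar S \to \bar S/\bar S^0_A$ gives a group homomorphism $\psi_i \colon (\bar R, +) \to \bar S/\bar S^0_A$. Its kernel $\{y \in \bar R : r_i y \in \bar S^0_A\}$ is $A$-type-definable, and since $\bar S/\bar S^0_A$ is a profinite (hence bounded) quotient, the index of $\ker \psi_i$ in $\bar R$ is bounded. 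By minimality of $(\bar R,+)^{00}_A = \RRaT_A$, we obtain $\RRaT_A \subseteq \ker \psi_i$, i.e., $r_i \RRaT_A \subseteq \bar S^0_A$.

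Finally, any $s \in \bar S$ can be written as $s = \sum_j t_j r_j$ with $t_j \in \bar R$, and for $y \in \RRaT_A$ this yields $sy = \sum_j t_j (r_j y) \in \sum_j t_j \bar S^0_A \subseteq \bar S^0_A$, the last inclusion being the left-ideal property of $\bar S^0_A$ from the previous lemma. Thus $\bar S \cdot \RRaT_A \subseteq \bar S^0_A$, and combining with the treatment of the integer part gives $\bar R \cdot \RRaT_A \subseteq \bar S^0_A + \RRaT_A$, closing the argument. The main subtlety is identifying the correct map to exploit: one needs both the hypothesis $r_i \bar R \subseteq \bar S$ (to make $\psi_i$ take values in $\bar S/\bar S^0_A$) and the previous lemma's left-ideal property of $\bar S^0_A$ (to push the arbitrary $\bar R$-multipliers $t_j$ back into $\bar S^0_A$); with those two ingredients in place, the proof is a clean assembly.
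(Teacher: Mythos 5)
Your Steps 2 and 3 are correct and close in spirit to the paper: where the paper shows $r_i\RRaT_A\subseteq(\bar S,+)^{00}_A\subseteq\bar S^{0}_A$ via $r_i\RRaT_A=(r_i\RR,+)^{00}_A$, you get $r_i\RRaT_A\subseteq\bar S^{0}_A$ directly because the kernel of the $A$-definable homomorphism $y\mapsto r_iy+\bar S^{0}_A$ is an $A$-type-definable subgroup of $(\RR,+)$ of bounded index; both arguments work. The genuine gap is in your treatment of the ``integer part''. Having written $r=s+\sum_i m_ir_i$ with $m_i\in\bar{\Z}$, you claim that $\sum_i m_ir_iy\in\RRaT_A$ for $y\in\RRaT_A$, justified by the $A$-definability of $y\mapsto r_iy$. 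That justification only yields $r_iy\in\RRaT_A$ (and, incidentally, the parenthetical claim that such a map preserves \emph{every} $A$-type-definable bounded-index subgroup is an overstatement; it preserves $\RRaT_A$ by the preimage argument). Multiplication by a \emph{nonstandard} integer $m_i$ is definable only over $m_i$, not over $A$, and $\bar{\Z}\cdot\RRaT_A\subseteq\RRaT_A$ is false in general: already for $R=\Z$ with the full structure (where $r_0=1$, so your claim would amount to $\bar{\Z}\cdot\RRaT_\emptyset\subseteq\RRaT_\emptyset$) it would make $\RRaT_\emptyset$ a bounded-index, $\emptyset$-type-definable ideal of the ring $\bar{\Z}$, forcing $\RRaT_\emptyset=\bar{\Z}^{00}_\emptyset=\bigcap_n n\bar{\Z}=\RRaD_\emptyset$, contradicting Fact~\ref{fact:bohrIfOnly}. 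This is precisely why the paper never tries to keep these terms inside $\RRaT_A$, but pushes them into $\bar S^{0}_A$: being an intersection of \emph{definable} subgroups, $\bar S^{0}_A$ is preserved by the $\bar{\Z}$-action (item (1) in the proof of the preceding lemma), whereas the merely type-definable $\RRaT_A$ need not be.

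The repair is immediate from what you already proved: by your Step 2, $r_iy\in\bar S^{0}_A$, hence $m_ir_iy=m_i(r_iy)\in\bar{\Z}\cdot\bar S^{0}_A\subseteq\bar S^{0}_A$, where the last inclusion is item (1) from the proof of the preceding lemma (note that you need that item, not just the lemma's statement, since $m_i$ lives in the auxiliary $\Z$-sort and not in $\RR$). With this change one gets $ry\in\bar S^{0}_A$ for all $r\in\RR$ and $y\in\RRaT_A$, which together with your reduction and Step 3 proves the lemma; the resulting argument is then essentially the paper's, up to your alternative (and valid) derivation of $r_i\RRaT_A\subseteq\bar S^{0}_A$.
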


	\begin{proof}
		First, note that $r_0\RRaT_A + \dots + r_{n-1}\RRaT_A = (\bar S,+)^{00}_A$.
		This follows easily from the observations that $r_i\RRaT_A = (r_i\bar R,+)^{00}_A$ and $r_0 \bar R + \dots +r_{n-1} \bar R = \bar S$ (the last equality follows from the last assumption of Theorem~\ref{theorem: bound for fin gen. rings}). Now, consider any $r \in \bar R$, and take the notation described at the beginning of the previous lemma. By that lemma, it remains to show that $r\RRaT_A \subseteq \bar S^0_A$. By the assumption on $R$, $r \in \sum_i \bar \Z r_i + r_i'r_i$ for some $r_i' \in \bar R$. Therefore, $r\RRaT_A \subseteq \sum_i \bar \Z r_i \RRaT_A +r_i'r_i \RRaT_A \subseteq \sum_i \bar \Z \cdot (\bar S,+)^{00}_A + r_i' (\bar S,+)^{00}_A \subseteq \sum_i \bar \Z \cdot \bar S^{0}_A + r_i' \bar S^{0}_A \subseteq \bar S^0_A$, where the last inclusion follows from the previous lemma and item (1) in its proof.
	\end{proof}

	The next corollary implies Theorem~\ref{theorem: bound for fin gen. rings}. More precisely, it implies the left version of Theorem~\ref{theorem: bound for fin gen. rings} (i.e.\ condition \nref{condition:X}{n+1+\frac12} for $i= {\leftarrow}$), but the right version is analogous, and the two-sided version follows from the left version.

	\begin{corollary}\label{corollary: implies theorem bound for fin gen. rings}
		We have $\RRaT_A + (\bar R \cdot \RRaT_A)^{+(n+1)} = \RRaT_A+ \bar S^{0}_A$.
	\end{corollary}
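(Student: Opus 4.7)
The plan is to prove the equality by establishing the two inclusions separately, each of which is essentially a direct consequence of what has already been set up. The forward inclusion $\supseteq$ is exactly Corollary~\ref{corollary: inclusion}, so nothing new is needed there. The content of the proof therefore lies in the reverse inclusion $\subseteq$, and even this turns out to be formal once Lemma~\ref{lemma: ideal in R} is in hand.

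For the reverse inclusion, I would argue as follows. By Lemma~\ref{lemma: ideal in R}, the set $I \coloneqq \RRaT_A + \bar S^{0}_A$ is a left ideal of $\bar R$, and it manifestly contains $\RRaT_A$. Being a left ideal of $\bar R$, it absorbs left multiplication by $\bar R$, so $\bar R \cdot \RRaT_A \subseteq I$. Being in particular a subgroup of $(\bar R, +)$, it is closed under finite sums, so $(\bar R \cdot \RRaT_A)^{+(n+1)} \subseteq I$, and adding $\RRaT_A$ gives $\RRaT_A + (\bar R \cdot \RRaT_A)^{+(n+1)} \subseteq I = \RRaT_A + \bar S^{0}_A$, as required.

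Combining with Corollary~\ref{corollary: inclusion}, we obtain the equality. There is no real obstacle at this stage; all the serious work has already been done in the preceding lemmas, namely in showing that $\bar S^0_A$ is a left ideal of $\bar R$ (which required unpacking how the generators $r_i$ act on $\bar S^0_A$ via the ring-of-integers expansion and condition \nref{condition:X'_R0}{1\frac12}) and in the compactness argument that yielded Corollary~\ref{corollary: inclusion} from the main lemma on $D + (R \cdot D)^{+(n+1)}$. The only thing one might want to double-check is that the formal manipulation $\bar R \cdot I \subseteq I$ indeed gives $(\bar R \cdot \RRaT_A)^{+(n+1)} \subseteq I$, but this is immediate since $\RRaT_A \subseteq I$ implies $\bar R \cdot \RRaT_A \subseteq \bar R \cdot I \subseteq I$, and $I$ is closed under addition. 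Finally, as noted right before the corollary, this equality together with the fact that every $A$-definable finite-index ideal of $S$ contains $\bar S^{0}_A$ gives exactly condition \nref{condition:X}{n+1+\frac12} in the left-sided case, and the right-sided and two-sided versions follow by the symmetric argument (using the second assumption $Rr_0 + \dots + Rr_{n-1} = r_0R + \dots + r_{n-1}R$) or directly from the left-sided statement.
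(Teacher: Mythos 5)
Your proof is correct, and both inclusions rest on the same two ingredients as the paper's (Corollary~\ref{corollary: inclusion} for $\supseteq$, Lemma~\ref{lemma: ideal in R} for $\subseteq$), but your handling of the reverse inclusion takes a slightly different and more elementary route. You use absorption directly: since $I\coloneqq \RRaT_A+\bar S^0_A$ is a left ideal containing $\RRaT_A$, you get $\bar R\cdot\RRaT_A\subseteq I$ and then close up under addition, so the left-hand side sits inside $I$ with no further input. The paper instead sandwiches through the $000$-component: the left-hand side is contained in $J=\RRiI_A$ (Fact~\ref{fact: occurrence of 000}), while the right-hand side, being an $A$-type-definable, bounded-index left ideal, contains $\RRiI_A$ by minimality. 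Your argument avoids invoking $J=\RRiI_A$ and the bounded-index/type-definability observations altogether, which makes this step cleaner; the paper's route has the side benefit of recording the intermediate fact $\RRiI_A\subseteq \RRaT_A+\bar S^0_A$, which matches the bookkeeping used to pass from the corollary to Theorem~\ref{theorem: bound for fin gen. rings} (there one still needs the minimality of $\RRiI_A$, so that ingredient is not ultimately avoided in the larger argument, only in the proof of this particular equality). Your closing remarks about the right-sided and two-sided versions agree with the paper's comment preceding the corollary and are fine.
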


	\begin{proof}
		The inclusion $(\supseteq)$ is Corollary~\ref{corollary: inclusion}. For the opposite inclusion, note that the left hand side is contained in $J =\bar R^{000}_A$. On the other hand, by Lemma~\ref{lemma: ideal in R}, the right hand side is a left ideal of $R$, which is clearly $A$-type-definable and of bounded index, so it contains $\bar R^{000}_A$. Therefore, we get $(\subseteq)$.
	\end{proof}

	\section{Topological rings}\label{section: topological rings}

	Throughout this section, $R$ is a topological ring. Equip it with the full structure (e.g.\ by adding predicates for all subsets of all finite Cartesion powers). In Subsection 3.5 of \cite{GJK22}, the following connected component was defined:

	\[
	\RR^{00}_{\topo}\coloneqq \RR^{00}_R + \mu = \RR^{00}_\emptyset + \mu,
	\]
	where $\mu$ stands for the infinitesimals. What makes this object interesting is \cite[Proposition 3.32]{GJK22} which says that the quotient map $R \to \RR/\RR^{00}_{\topo}$ is the Bohr (i.e.\ universal) compactification of the topological ring $R$. Another point is that Theorem~\ref{theorem: main topological}, which we will prove below, implies directly that condition ($\dagger \dagger$) in Subsection 4.5 of \cite{GJK22} holds for all unital, topological rings, and hence (as stated right below ($\dagger \dagger$) in \cite{GJK22}) one gets the simplified formulas for the Bohr compactifications of the topological groups $\UT_n(R)$ and $\T_n(R)$ over any unital, topological ring $R$ (analogous to those in \cite[Corollary 4.5]{GJK22}).

	The above ring component is an analog of the component ${\bar G}^{00}_{\topo}\coloneqq {\bar G}^{00}_\emptyset + \mu$ of a topological group $G$ (equipped with the full structure), which was studied in \cite{GPP14} and \cite{KrPi}. It was introduced because of the same reason as the above component of a topological ring, namely: the quotient map $G \to {\bar G}^{00}_{\topo}$ is the Bohr compactification of the topological group $G$. And in \cite[Subsection 4.5]{GJK22}, this was used to find formulas for Bohr compactifications of $\UT_n(R)$ and $\T_n(R)$ over unital, topological rings.

	Returning to the topological ring $R$, note that $(R,+)$ is a topological group, so we have
	\[
	\RRaT_{\topo}\coloneqq \RRaT_{\emptyset} + \mu.
	\]

	The following fact (see \cite[Lemma 3.29]{GJK22}) will be important in the proof of Theorem~\ref{theorem: topological case} below.

	\begin{fact}\label{fact: R_topo is an ideal}
		$\RR^{00}_{\topo}$ is a two-sided ideal.
	\end{fact}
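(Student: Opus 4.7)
I would pass to the logic-topology quotient $C \coloneqq \RR/\RR^{00}_\emptyset$, which is a compact Hausdorff ring (the Bohr compactification of $R$ as an abstract ring), in which the image of $R$ under the quotient map $\pi\colon \RR \to C$ is dense. Since $\RR^{00}_\emptyset$ is already a two-sided ideal by Fact~\ref{fact: ideal component} and $\mu$ is an additive subgroup of $\RR$ by continuity of addition, the set $\RR^{00}_{\topo}=\RR^{00}_\emptyset+\mu$ is an additive subgroup of $(\RR,+)$. The two-sided ideal property for $\RR^{00}_{\topo}$ therefore reduces to establishing $\RR\cdot\mu\subseteq\RR^{00}_{\topo}$ together with the symmetric statement.

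The first key step is to verify that $\pi(\mu)$ is closed in $C$. Since $R$ carries the full structure, each open neighborhood $V$ of $0$ in $R$ is $\emptyset$-definable, so $\mu=\bigcap_V\bar V$. A standard saturation argument shows that for each such $V$ the set $\bar V+\RR^{00}_\emptyset$ is type-definable: if $x$ lies outside this set then the type-definable set $\RR^{00}_\emptyset$ is disjoint from $x-\bar V$, so by compactness some definable $U\supseteq\RR^{00}_\emptyset$ is also disjoint from $x-\bar V$, giving a $\bigvee$-definable description of the complement. Intersecting over $V$ yields that $\mu+\RR^{00}_\emptyset=\pi^{-1}(\pi(\mu))$ is type-definable, i.e.\ $\pi(\mu)$ is closed in the logic topology.

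For the second step, fix $r\in R$. Continuity of left multiplication by $r$ in the topological ring $R$ gives, for each open neighborhood $V$ of $0$, an open neighborhood $U$ of $0$ with $rU\subseteq V$; since $r$, $U$, and $V$ are all $\emptyset$-definable under the full structure, this transfers to $\RR$, yielding $r\bar U\subseteq\bar V$ and hence $r\mu\subseteq\mu$. Thus $\pi(r)\cdot\pi(\mu)\subseteq\pi(\mu)$ for every $r\in R$. Because $C$ is a compact Hausdorff ring with continuous multiplication, $\pi(\mu)$ is closed, and $\pi(R)$ is dense in $C$, a routine net-approximation argument upgrades this to $C\cdot\pi(\mu)\subseteq\pi(\mu)$; lifting back gives $\RR\cdot\mu\subseteq\mu+\RR^{00}_\emptyset=\RR^{00}_{\topo}$. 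Combined with $\RR\cdot\RR^{00}_\emptyset\subseteq\RR^{00}_\emptyset$, this yields $\RR\cdot\RR^{00}_{\topo}\subseteq\RR^{00}_{\topo}$; the right-sided inclusion is entirely symmetric. The main obstacle is the first step, namely establishing type-definability of $\bar V+\RR^{00}_\emptyset$ (and hence closedness of $\pi(\mu)$ in $C$), which uses in an essential way that $\RR^{00}_\emptyset$ is type-definable of bounded index; once this is in place, everything else is a routine compactness/density argument inside the compact ring $C$.
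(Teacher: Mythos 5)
Your argument is correct, but note that the paper does not prove this statement at all: it simply imports it as \cite[Lemma 3.29]{GJK22}, so what you have written is a self-contained substitute for that citation rather than a variant of an internal proof. Your route --- pass to the compact Hausdorff ring $C=\RR/\RR^{00}_\emptyset$ (legitimate, since $\RR^{00}_\emptyset$ is a two-sided ideal by Fact~\ref{fact: ideal component} and the logic topology makes the quotient a topological ring, as stated in the preliminaries), show $\pi(\mu)$ is closed, show $r\mu\subseteq\mu$ for $r\in R$ by transferring continuity of multiplication, and then upgrade to all of $C$ by density of $\pi(R)$ and closedness of $\pi(\mu)$ --- is sound. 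Two small points are asserted rather than argued, but both are standard and consistent with the paper's framework: (a) the step ``intersecting over $V$'' needs the same saturation/directedness argument you used for a single $V$ to see that $\bigcap_V(\bar V+\RR^{00}_\emptyset)=\mu+\RR^{00}_\emptyset$; in fact both type-definability claims follow at once from the general fact that the image of a type-definable set under a definable map is type-definable, applied to $+$ on $\mu\times\RR^{00}_\emptyset$; (b) density of $\pi(R)$ in $C$ is exactly the statement that $R\to\RR/\RR^{00}_R$ is the (definable) Bohr compactification, which the paper's preliminaries already invoke, and here $\RR^{00}_\emptyset=\RR^{00}_R$ because of the full structure. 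With those remarks made explicit, your proof is complete and arguably a nice addition, since it replaces an external reference by a short compactness-and-density argument.
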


	Analogously to \cite[Subsection 3.5]{GJK22}, one can define
	\[
	\RR^{0}_{\topo}\coloneqq \RR^{0}_R + \mu = \RR^{0}_\emptyset + \mu,
	\]
	and show that it is a two-sided ideal and the quotient map $R \to \RR/\RR^{0}_{\topo}$ is the universal profinite compactification of the topological ring $R$.

	By Corollary~\ref{corollary: 0=00 in rings}, we immediately get

	\begin{corollary}\label{corollary: R00_topo = R0_topo}
		If $R$ is s-unital or of positive characteristic, then $\RR^{00}_{\topo} = \RR^{0}_{\topo}$.
	\end{corollary}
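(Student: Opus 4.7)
The plan is to observe that this corollary is essentially immediate from Corollary~\ref{corollary: 0=00 in rings} once we unwind the definitions of the topological components. Recall that by definition we have $\RR^{00}_{\topo} = \RR^{00}_\emptyset + \mu$ and, analogously, $\RR^{0}_{\topo} = \RR^{0}_\emptyset + \mu$, where $\mu$ is the group of infinitesimals. So it suffices to show that the ``algebraic'' components agree, namely that $\RR^{00}_\emptyset = \RR^{0}_\emptyset$, and then the topological equality follows by simply adding $\mu$ to both sides.

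For the algebraic equality, I would apply Corollary~\ref{corollary: 0=00 in rings} directly with $A = \emptyset$. That corollary says precisely that for a ring that is s-unital (even one-sided) or of positive characteristic, we have $\RR^{00}_A = \RR^{0}_A$, and the hypothesis on $R$ in the corollary we are proving exactly matches one of these two alternatives. This gives $\RR^{00}_\emptyset = \RR^{0}_\emptyset$, and combining with the definitions above yields $\RR^{00}_{\topo} = \RR^{0}_{\topo}$.

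There is no real obstacle here; the work has already been done in Corollary~\ref{corollary: 0=00 in rings}, and all that is needed is to note that the $+\mu$ term appears in both topological components in exactly the same way, so equality of the non-topological parts transfers directly. One small thing to flag in the write-up is that the full structure assumption on $R$ in this section ensures that $\emptyset$-definability in the relevant sense is available, so that Corollary~\ref{corollary: 0=00 in rings} applies (its assumption that $R$ is $\emptyset$-definable is satisfied trivially here, and the hypothesis of being s-unital or of positive characteristic is preserved and does not depend on the choice of structure).
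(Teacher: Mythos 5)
Your proposal is correct and matches the paper's own (implicit) argument: the paper derives this corollary immediately from Corollary~\ref{corollary: 0=00 in rings} applied over $\emptyset$, using the definitions $\RR^{00}_{\topo}=\RR^{00}_\emptyset+\mu$ and $\RR^{0}_{\topo}=\RR^{0}_\emptyset+\mu$, exactly as you do. Nothing further is needed.
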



	Using this together with Corollary~\ref{corollary: 3 steps}, we will easily prove the topological counterpart of the latter, i.e.\ Theorem~\ref{theorem: main topological}.

	\begin{theorem}\label{theorem: topological case}
		Let $R$ be a topological ring.
		\begin{enumerate}
			\item
			If $R$ is s-unital, then
			\begin{itemize}
				\item
				$\RRaT_{\topo} + \RR \cdot \RRaT_{\topo} + \RR \cdot \RRaT_{\topo} = \RR^{00}_{\topo} = \RR^{0}_{\topo}$,
				\item
				$\RRaT_{\topo} + \RRaT_{\topo} \cdot \RR + \RRaT_{\topo} \cdot \RR = \RR^{00}_{\topo}= \RR^{0}_{\topo}$,
				\item
				$\RRaT_{\topo} + \RR\cdot \RRaT_{\topo} \cdot \RR+ \RR\cdot \RRaT_{\topo} \cdot \RR= \RR^{00}_{\topo}= \RR^{0}_{\topo}$.
			\end{itemize}
			\item
			If $R$ is of positive characteristic (not necessarily s-unital), then
			\begin{itemize}
				\item
				$\RRaT_{\topo} + \RR \cdot \RRaT_{\topo} = \RR^{00}_{\topo}= \RR^{0}_{\topo}$,
				\item
				$\RRaT_{\topo} + \RRaT_{\topo} \cdot \RR = \RR^{00}_{\topo}= \RR^{0}_{\topo}$,
				\item
				$\RRaT_{\topo} + (\RR \cup \{1\}) \cdot \RRaT_{\topo} \cdot (\RR \cup \{1\})= \RR^{00}_{\topo}= \RR^{0}_{\topo}$.
			\end{itemize}
		\end{enumerate}
	\end{theorem}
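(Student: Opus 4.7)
The plan is to deduce Theorem~\ref{theorem: topological case} from its non-topological counterpart Corollary~\ref{corollary: 3 steps} by reducing everything to identities involving $\mu$. Applying Corollary~\ref{corollary: 3 steps} with $A=\emptyset$ (which is legitimate because $R$ is equipped with the full structure, so $\emptyset$-parameters suffice) gives, in the s-unital case, equalities such as $\RRaT_\emptyset + \RR\cdot \RRaT_\emptyset + \RR\cdot \RRaT_\emptyset = \RR^{00}_\emptyset$, and analogous identities for the right and two-sided versions. I then intend to ``add $\mu$'' to both sides and use the defining relations $\RRaT_\topo = \RRaT_\emptyset + \mu$ and $\RR^{00}_\topo = \RR^{00}_\emptyset + \mu$.

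For the inclusion $(\supseteq)$ I would argue as follows: the right-hand side $\RR^{00}_\topo = \RR^{00}_\emptyset + \mu$ is covered because $\RRaT_\topo + \RR\cdot\RRaT_\topo + \RR\cdot\RRaT_\topo \supseteq \RRaT_\emptyset + \RR\cdot\RRaT_\emptyset + \RR\cdot\RRaT_\emptyset = \RR^{00}_\emptyset$, and simultaneously it contains $\RRaT_\topo \supseteq \mu$. For the reverse inclusion $(\subseteq)$, the key point is that $\RRaT_\topo \subseteq \RR^{00}_\topo$: indeed $\RRaT_\emptyset \subseteq \RR^{00}_\emptyset$ (every type-definable, bounded index ideal is a type-definable, bounded index additive subgroup, so the smallest such additive subgroup lies inside the smallest such ideal), and $\mu \subseteq \RR^{00}_\topo$, so $\RRaT_\topo = \RRaT_\emptyset + \mu \subseteq \RR^{00}_\emptyset + \mu = \RR^{00}_\topo$. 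Since $\RR^{00}_\topo$ is a two-sided ideal by Fact~\ref{fact: R_topo is an ideal}, we get $\RR\cdot \RRaT_\topo \subseteq \RR\cdot \RR^{00}_\topo \subseteq \RR^{00}_\topo$ and similarly $\RRaT_\topo \cdot \RR \subseteq \RR^{00}_\topo$ and $\RR\cdot \RRaT_\topo\cdot \RR \subseteq \RR^{00}_\topo$, which handles the left, right, and two-sided versions of the three bulleted identities simultaneously.

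The second equality $\RR^{00}_\topo = \RR^0_\topo$ in each bullet is then immediate from Corollary~\ref{corollary: R00_topo = R0_topo}, under either hypothesis (s-unital or positive characteristic). The positive characteristic case (2) is proved by the same template, but starting from part (2) of Corollary~\ref{corollary: 3 steps} to have only two summands on the left-hand side; no new ingredients are required.

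I do not expect any real obstacle here: the proof is essentially a bookkeeping reduction to the already-established algebraic content of Corollary~\ref{corollary: 3 steps}, together with the two structural facts that $\RR^{00}_\topo$ is a two-sided ideal (Fact~\ref{fact: R_topo is an ideal}) and that $\RRaT_\emptyset \subseteq \RR^{00}_\emptyset$. The only point requiring a touch of care is keeping track that adding $\mu$ on the left side of each identity does not enlarge the set beyond $\RR^{00}_\topo$, which is exactly where the ideal property of $\RR^{00}_\topo$ is used to absorb the terms $\RR\cdot \mu$, $\mu\cdot \RR$, and $\RR\cdot \mu\cdot \RR$ arising when $\RRaT_\topo$ is expanded as $\RRaT_\emptyset + \mu$ in the various products.
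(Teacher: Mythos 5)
Your proposal is correct and follows essentially the same route as the paper: expand $\RRaT_{\topo}=\RRaT_\emptyset+\mu$, feed in Corollary~\ref{corollary: 3 steps} over $A=\emptyset$, absorb the $\mu$-terms using Fact~\ref{fact: R_topo is an ideal}, and invoke Corollary~\ref{corollary: R00_topo = R0_topo} for $\RR^{00}_{\topo}=\RR^{0}_{\topo}$. The only slight looseness is in your $(\supseteq)$ phrasing (containing $\RR^{00}_\emptyset$ and $\mu$ separately is not formally containing their sum), but your own ``add $\mu$'' plan gives the correct one-line fix, namely that the left-hand side contains $\RRaT_\emptyset+\mu+\RR\cdot\RRaT_\emptyset+\RR\cdot\RRaT_\emptyset=\RR^{00}_\emptyset+\mu$, exactly as in the paper.
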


	\begin{proof}
		The equality $\RR^{00}_{\topo} = \RR^{0}_{\topo}$ follows from Corollary~\ref{corollary: R00_topo = R0_topo}. Let us prove the first equality in item (1). All the rest is analogous.

		We have $\RRaT_{\topo} + (\bar R \cdot \RRaT_{\topo})^{+2} = \RRaT_{\emptyset}+\mu + (\bar R \cdot (\RRaT_{\emptyset}+\mu))^{+2}$ which is contained in $\RRaT_{\emptyset} + (\bar R \cdot \RRaT_{\emptyset})^{+2} + \mu + (\bar R \mu)^{+2} = \bar R^{00}_\emptyset + \mu + (\bar R \mu)^{+2} = \bar R^{00}_\emptyset + \mu$
		(with the first equality following from Corollary~\ref{corollary: 3 steps}, and the second one from Fact~\ref{fact: R_topo is an ideal})
		and which contains $\RRaT_{\emptyset} + (\bar R \cdot \RRaT_{\emptyset})^{+2} + \mu = \bar R^{00}_\emptyset + \mu$ (where the equality follows from Corollary~\ref{corollary: 3 steps}). Therefore, $\RRaT_{\topo} + (\bar R \cdot \RRaT_{\topo})^{+2} = R^{00}_\emptyset + \mu = \RR^{00}_{\topo}$.
	\end{proof}

	The following question is related to Theorem~\ref{theorem: topological case}.
	\begin{question}\label{question: R mu}
		Does $(\bar R \cup \{1\}) \mu$ generate a group in finitely many steps?
	\end{question}

	Recall from \cite{KrPi} that for a topological group $G$ (equipped with the full structure), ${\bar G}^{000}_{\topo}\coloneqq {\bar G}^{000}_\emptyset \langle \mu^{\bar G} \rangle$.
	Analogously, for a topological ring $R$ we can define
	\[
	\RR^{000}_{\topo}\coloneqq \RR^{000}_\emptyset + \langle (\bar R \cup \{1\}) \mu (\bar R \cup \{1\})\rangle.
	\]
	This is the smallest bounded index, invariant, two-sided ideal of $\bar R$ containing $\mu$.\footnote{Note that, in contrast to $\RR^{00}_{\topo}$ and $\RR^{0}_{\topo}$, we cannot define $\RR^{000}_{\topo}$ as $\RR^{000}_\emptyset+\mu$, since we do not know whether the latter is necessarily an ideal.}

	Let $J_{\topo}$ be the subgroup of $(\RR,+)$ generated by $(\RR \cup \{1\}) \cdot \RRaT_{\topo} (\RR \cup \{1\})$. Since by \cite[Corollary 2.37]{HKP1} we know that $\RRaT_{\topo} = (\RR,+)^{000}_{\topo}$, the proof of Fact~\ref{fact: occurrence of 000} adapts to yield that $J_{\topo}=\RR^{000}_{\topo}$. Thus, Fact~\ref{fact: conditions equivalent to 000=00} has its obvious topological counterpart.
	Hence, in the same way as in Corollary~\ref{corollary: 0=00=000}, Theorem~\ref{theorem: topological case} implies

	\begin{corollary}\label{corollary: 0=00=000 for top. rings}
		For an arbitrary topological ring $R$ which is s-unital or of positive characteristic, $\RR^{0}_{\topo}=\RR^{00}_{\topo} = \RR^{000}_{\topo}$.
	\end{corollary}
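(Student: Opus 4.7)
The plan is to mimic the argument for Corollary~\ref{corollary: 0=00=000}, replacing each ingredient by its topological counterpart as indicated in the paragraph preceding the statement. The equality $\RR^{0}_{\topo}=\RR^{00}_{\topo}$ is already given (for both classes of rings considered) by Corollary~\ref{corollary: R00_topo = R0_topo}, so the only thing left to establish is $\RR^{00}_{\topo}=\RR^{000}_{\topo}$.

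For this, I would first record the topological version of Fact~\ref{fact: conditions equivalent to 000=00}: using \cite[Corollary 2.37]{HKP1}, which gives $\RRaT_{\topo}=(\RR,+)^{000}_{\topo}$, the verbatim proof of Fact~\ref{fact: occurrence of 000} shows that the subgroup $J_{\topo}$ of $(\RR,+)$ generated by $(\RR\cup\{1\})\cdot\RRaT_{\topo}\cdot(\RR\cup\{1\})$ equals $\RR^{000}_{\topo}$, and likewise for the one-sided variants. Consequently the three conditions \emph{($J_{\topo}$ is type-definable)}, \emph{($J_{\topo}$ is generated in finitely many steps from $(\RR\cup\{1\})\cdot\RRaT_{\topo}\cdot(\RR\cup\{1\})$)}, and $\RR^{000}_{\topo}=\RR^{00}_{\topo}$ are equivalent, by exactly the same reasoning as in the proof of Fact~\ref{fact: conditions equivalent to 000=00}.

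Next I would feed Theorem~\ref{theorem: topological case} into this equivalence. In either case (s-unital or positive characteristic), the third bullet of the relevant item exhibits $\RR^{00}_{\topo}$ as a finite sum of translates and products coming from $(\RR\cup\{1\})\cdot\RRaT_{\topo}\cdot(\RR\cup\{1\})$; in particular $\RR^{00}_{\topo}\subseteq J_{\topo}$. Combined with the trivial inclusion $J_{\topo}=\RR^{000}_{\topo}\subseteq\RR^{00}_{\topo}$, this yields $\RR^{000}_{\topo}=\RR^{00}_{\topo}$, which is the ``finitely many steps'' condition of the topological counterpart of Fact~\ref{fact: conditions equivalent to 000=00}. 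Chaining with Corollary~\ref{corollary: R00_topo = R0_topo} gives the desired chain $\RR^{0}_{\topo}=\RR^{00}_{\topo}=\RR^{000}_{\topo}$.

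The only point that requires a little care (and which I would anticipate as the mild obstacle) is verifying that the proofs of Facts~\ref{fact: occurrence of 000} and~\ref{fact: conditions equivalent to 000=00} really do transfer to the topological setting, since there one works with $(\RR,+)^{000}_{\topo}$ rather than $(\RR,+)^{000}_A$, and the identification $\RRaT_{\topo}=(\RR,+)^{000}_{\topo}$ must replace Fact~\ref{fact: G00=G000}; this is exactly what \cite[Corollary 2.37]{HKP1} provides, so the rest of the adaptation is formal. Once this is in place, everything else is a direct quotation of Theorem~\ref{theorem: topological case} and Corollary~\ref{corollary: R00_topo = R0_topo}.
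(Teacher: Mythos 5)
Your proposal is correct and follows essentially the same route as the paper: it uses \cite[Corollary 2.37]{HKP1} to adapt Fact~\ref{fact: occurrence of 000} (so that $J_{\topo}=\RR^{000}_{\topo}$), deduces the topological counterpart of Fact~\ref{fact: conditions equivalent to 000=00}, and then combines Theorem~\ref{theorem: topological case} with Corollary~\ref{corollary: R00_topo = R0_topo}, exactly as the paper does. The only cosmetic difference is that you extract the inclusion $\RR^{00}_{\topo}\subseteq J_{\topo}$ directly from the third bullet rather than quoting the equivalence wholesale, which is the same reasoning.
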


	Alternatively, one can see $\RR^{00}_{\topo} = \RR^{000}_{\topo}$ as a consequence of Corollary~\ref{corollary: 0=00=000} and Fact~\ref{fact: R_topo is an ideal}.

	\section{Examples}\label{section: examples}
	We study several classical examples, as well as construct some new ones, for which we compute the number of steps (i.e.\ $n$) needed in conditions from Definition~\ref{definition:conditions}, which show optimality of some of our results and answer several natural questions discussed in the introduction.
	In all the examples considered in this section, we take $M=R$, equipped with the \textbf{full structure}, and we take $A=\emptyset$.\footnote{Since we have the full structure on $M$, the choice of $A$ is essentially immaterial.}

	One should keep in mind the relationships obtained in Proposition~\ref{proposition: equivalence of finite numbers of steps}, which
	will be often used implicitly.

	By the examples from Subsections 3.3 and 4.4 of \cite{GJK22}, one gets:
	\begin{enumerate}
		\item for $R\coloneqq \mathbb{Z}$, conditions \nref{condition:group_R00}{\frac12} and \nref{condition:X}{1} hold, but \nref{condition:X}{\frac12} fails (see Example 3.18 and Lemma 4.17 in \cite{GJK22}); thus, \nref{condition:def_R00}{1} holds, while \nref{condition:def_R00}{\frac12} fails;
		\item for $R\coloneqq K$ or $R\coloneqq K[\bar X]$, where $K$ is an infinite field and $\bar X$ is a tuple of possibly infinitely many variables, conditions \nref{condition:X}{1}, \nref{condition:def_R00}{1}, and \nref{condition:group_R00}{1} hold by \cite[Proposition 4.21]{GJK22}, but for example in positive characteristic \nref{condition:X}{\frac12}, \nref{condition:def_R00}{\frac12}, and \nref{condition:group_R00}{\frac12} fail (see Example 3.23 in \cite{GJK22}).
	\end{enumerate}

	\subsection{Example: the ring \texorpdfstring{$\pmb{\Z[X]}$}{Z[X]}}
	\label{example: Z[X]}
	Our first goal is to analyze the ring of polynomials $\Z[X]$.

	Let $R\coloneqq \Z[X]$. By Proposition~\ref{proposition: Tomek2}, all conditions \nref{condition:X}{2} -- \nref{condition:X'_R0}{2} hold. We will show that \nref{condition:group_R00}{1} fails; thus, all other conditions fail for $n=1$.
	This shows that one can not decrease the number of steps from $1\frac12$ to $1$ in Corollary~\ref{corollary: main corollary} (in particular, in Theorem~\ref{theorem: main algebraic theorem}, thus showing that it is optimal in the sense of the number of steps) and from $2\frac12$ to $1$ in Corollary~\ref{corollary: 3 steps}(1).

	The fact that \nref{condition:group_R00}{1} fails follows immediately from

	\begin{lemma}
		\label{lemma:Z[x]_no_iii1}
		The group $H\coloneqq 2R+\langle X^n-X^m : n,m \text{ are both prime or both not prime}\rangle\leq (R,+)$ has index at most $4$ in $R$, but $R\cdot H$ does not contain any finite index ideal of $R$.
	\end{lemma}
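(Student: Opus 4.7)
The plan is to first verify the index bound by a direct quotient computation. Since $H$ contains $2R$, the quotient $R/H$ is a quotient of $\FF{2}[X]$, and modulo the additional relations $X^n + X^m \sim 0$ (for $n,m$ both prime or both non-prime), every monomial $X^n$ is identified to one of two classes $\alpha$ (if $n$ is prime) or $\beta$ (if $n$ is non-prime). These two classes span $R/H$, so $R/H \cong \FF{2}\alpha \oplus \FF{2}\beta$ has order at most $4$.

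For the second, and main, assertion, suppose for contradiction that $I \subseteq R\cdot H$ is a finite-index ideal of $R$. The key observation is that, for any $f$ irreducible in $\Z[X]$, the membership $f \in R\cdot H$ requires a factor in $H$; the only factors available are $\pm 1$ and $\pm f$, and since $H \cap \Z = 2\Z$ we have $\pm 1 \notin H$, so the condition reduces to $f \in H$. Therefore it suffices to exhibit an irreducible $f \in I$ with $f \notin H$.

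Such $f$ will be produced by Eisenstein's criterion. Let $N$ be the positive generator of $I \cap \Z$ (which is even, since $N \in R\cdot H$ forces $N \in 2\Z$) and set $\bar I := (I + 2R)/2R$, $\bar H := (H + 2R)/2R \subseteq \FF{2}[X]$. Granted that we can find $\bar q \in \bar I \setminus \bar H$ (see below), lift to some $q \in I$ with $q \equiv \bar q \pmod{2R}$. Choose an odd prime $p$ coprime to $N$ and to the leading coefficient of $q$; then a routine solving of linear congruences modulo $p$ and $p^2$ yields $r \in R$ of degree strictly less than $\deg q$ such that $f := q + Nr$ is Eisenstein at $p$. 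Since $NR \subseteq I$, we have $f \in I$; since $N$ is even, $f \equiv \bar q \pmod{2R}$, so $f \notin H$; and Eisenstein guarantees that $f$ is irreducible in $\Z[X]$. By the observation above, $f \notin R \cdot H$, contradicting $I \subseteq R\cdot H$.

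The substantive step is thus to show that $\bar I \not\subseteq \bar H$ for any nonzero ideal $\bar I \subseteq \FF{2}[X]$. Writing $\bar I = (\bar f_0)$ for a nonzero $\bar f_0$ with support $S \subseteq \N$, the condition $X^k \bar f_0 \in \bar H$ unpacks (by definition of $\bar H$) to the parity identity $\sum_{s \in S} \chi_P(k+s) \equiv 0 \pmod 2$, where $\chi_P$ is the characteristic function of the primes. If this identity held for every $k \geq 0$, then $\chi_P$ would satisfy a nontrivial $\FF{2}$-linear recurrence, and so (by pigeonhole on the finite state space) would be eventually periodic; but this contradicts the elementary fact that the primes are infinite yet of density zero. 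Hence there exists $k$ with $X^k \bar f_0 \in \bar I \setminus \bar H$, providing the required $\bar q$ and completing the plan. The main obstacle is precisely this last paragraph: the translation of ``$\bar I \subseteq \bar H$'' into a linear recurrence on $\chi_P$ and its refutation via the non-periodicity of the primes.
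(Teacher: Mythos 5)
Your argument is correct, but the way you produce the crucial polynomial is genuinely different from the paper's. Both proofs get the index bound the same way, and both finish identically: exhibit an Eisenstein (hence $\mathbb{Q}$-irreducible) polynomial $f$ lying in the given finite-index ideal $I$ but outside $H$, and use $H\cap\Z=2\Z$ to rule out any factorization $f=rh$ with $h\in H$, so that $f\notin R\cdot H$. The paper finds $f$ explicitly: every ideal of index at most $m$ contains $(m!,\,X^{m!+m}-X^m)$; taking $m$ prime makes $X^{m!+m}-X^m\notin H$ (as $m!+m$ is composite), and Dirichlet's theorem supplies a prime $p\equiv-1\pmod{m!}$ for which $Q_{m,p}=X^{m!+m}+pX^m+m!p$ is Eisenstein and congruent to $X^{m!+m}-X^m$ modulo both $H$ and the ideal. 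You instead show abstractly that the nonzero ideal $\bar I=(I+2R)/2R$ of $\FF{2}[X]$ cannot lie inside $\bar H=H/2R$, since that would force the characteristic function of the primes to satisfy a nontrivial $\FF{2}$-linear recurrence, hence be eventually periodic, contradicting the infinitude and zero density of the primes; you then repair a lift $q\in I$ of a witness into an Eisenstein polynomial $f=q+Nr\in I$ by congruences using $N\in I\cap\Z$ (which is even, so the class modulo $2R$, and hence non-membership in $H$, is preserved). Your route avoids Dirichlet altogether, at the cost of the recurrence/periodicity argument; the paper's explicit polynomials make the ``lies in every finite-index ideal'' step mechanical.

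Two details to tighten. Eisenstein yields irreducibility over $\mathbb{Q}$, not in $\Z[X]$, and your congruences do not control odd primes dividing the content of $f$, so ``the only factors available are $\pm 1$ and $\pm f$'' is not literally justified. This is harmless: the content of $f$ is odd (because $f\not\equiv 0\pmod{2R}$), so every divisor of $f$ in $\Z[X]$ is, up to sign, either an odd constant --- excluded since $H\cap\Z=2\Z$ --- or an odd constant times the primitive part of $f$, which is congruent to $f$ modulo $2R\subseteq H$ and hence also outside $H$; but this should be said in place of asserting irreducibility in $\Z[X]$. Also, Eisenstein needs $\deg f\geq 1$, so in the degenerate case where the only witness produced in $\bar I\setminus\bar H$ is the constant $1$ (i.e.\ $\bar I=\FF{2}[X]$), replace it by, say, $X^2$, which again violates the parity criterion (or note that an odd integer constant in $I$ yields a contradiction directly). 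With these small repairs, every step checks out.
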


	\begin{proof}
		It is easy to see that $[R:H] \leq 4$, since every element of $R/H$ is represented by one of $0,1,X^2,X^2+1$. Namely, for any $P(X)=\sum_j a_jX^j\in \Z[x]$ we have $P(X)\equiv \alpha+\beta X^2 \pmod H$, where
		\begin{align*}
			\alpha = \sum_{j\textrm{ nonprime}} a_j \mod 2, && \beta= \sum_{j\textrm{ prime}}a_j\mod 2.
		\end{align*}
		(In fact, by the argument in Claim 4 below, we get $[R:H]=4$.)

		For each positive integer $m$ and prime $p\equiv -1\pmod {m!}$, consider the polynomial
		\[
		Q_{m,p}(X)\coloneqq X^{m!+m}+pX^m+m!p.
		\]
		Note that $Q_{m,p}$ is irreducible in $R$ by Eisenstein's criterion: since $p\equiv -1 \pmod {m!}$, $p$ does not divide $m!$, so $p^2$ does not divide $m!p$, and, on the other hand, $p$ divides all coefficients of $Q_{m,p}$ except for the leading coefficient. Note also that for each $m$, a prime $p\equiv -1 \pmod{m!}$ exists by Dirichlet's theorem on arithmetic progressions.

		\begin{clm}
			If $I$ is an ideal of finite index
			at most $m$ in $R$, then the ideal $(m!,X^{m!+m}-X^m)$ is contained in $I$.
		\end{clm}
		\begin{clmproof}
			Since $[R:I]\leq m$, we clearly have $m! \in I$.
			We also get that that $X^j - X^i \in I$ for some $i<j$ from $\{0,\dots,m\}$. Then $X^{m+k} - X^m \in I$ for $k\coloneqq j-i$. Therefore, $X^{m!+m}-X^m=(1+X^k+X^{2k}+\ldots+X^{m!-k})(X^{m+k}-X^m) \in I$.
		\end{clmproof}
		Now, write $Q_m'$ for the polynomial $X^{m!+m}-X^m$.

		\begin{clm}
			If $I$ is an ideal of finite index in $R$, then for sufficiently large $m$, for all $p\equiv -1 \pmod {m!}$ we have $Q_{m,p}\in I$.
		\end{clm}
		\begin{clmproof}
			By Claim~1, we have that for any $m\geq [R:I]$,
			\[
			I\supseteq I_m\coloneqq (m!,X^{m!+m}-X^m).
			\]
			On the other hand, if $p\equiv -1 \pmod{m!}$, then it is easy to see that $Q_{m,p}\equiv Q'_{m}\equiv 0\pmod{I_m}$.
		\end{clmproof}
		\begin{clm}
			For all $m\geq 2$ we have $Q_{m,p}\equiv Q_m'\pmod{H}$.
		\end{clm}
		\begin{clmproof}
			Fix any $m\geq 2$ and $p\equiv -1\pmod{m!}$. Since $m\geq 2$, it follows that $2$ divides $m!$, so $p\equiv -1\pmod{2}$ and $m!\equiv 0 \pmod{2}$. Since $H$ contains $2R$, the conclusion follows.
		\end{clmproof}

		\begin{clm}
			If $n$ is prime and $m$ is not prime, then $X^n-X^m\notin H$ and $1 \notin H$.
		\end{clm}
		\begin{clmproof}
			This follows from the observation that a polynomial belongs to $H$ if and only if after reducing its coefficients modulo 2, both the number of monomials $X^i$ with $i$ prime and the number of monomials $X^i$ with non-prime $i$ are even.
		\end{clmproof}
		Now, let $m$ and $p\equiv -1\pmod{m!}$ be arbitrary primes. Then $m!+m$ is clearly not prime, so, by Claim 4, $Q_m'\notin H$, and hence, by Claim 3, $Q_{m,p}\notin H$. Since $Q_{m,p}$ is irreducible and $1\notin H$, it follows that $Q_{m,p}\notin R\cdot H$. By infinitude of primes and Claim 2, it follows that $R\cdot H$ does not contain any ideal of finite index.
	\end{proof}

	\subsection*{More variables and free rings}
	Let us briefly look at $R\coloneqq \Z[X,Y]$. Since there is an epimorphism $\Z[X,Y] \to \Z[X]$, by
	Lemma~\ref{lemma:Z[x]_no_iii1}, we get that \nref{condition:group_R00}{1} fails, while \nref{condition:group_R00}{1\frac12} holds (as it holds for every ring). By Corollary~\ref{corollary: which conditions hold for rings}, \nref{condition:X}{2\frac12} and \nref{condition:def_R00}{2\frac12} hold.
	The question whether \nref{condition:X}{2} holds remains open.
	The same comments and question are valid for $R\coloneqq \Z[\bar X]$ (where $\bar X$ is a possibly infinite tuple of variables). In order to answer Question~\ref{question: 2 steps} in the affirmative for unital, commutative rings, it is enough to restrict the situation to the rings $\Z[\bar X]$ (for arbitrarily long $\bar X$), i.e.\ determine whether \nref{condition:X}{2} or \nref{condition:X}{1\frac12} holds for such rings. This follows easily from the existence of an epimorphism
	$\Z[\bar X] \to R$ (where $R$ is an arbitrary commutative, unital ring; see the comments after Question 4.9 in \cite{GJK22}).
	A similar comment applies to Question~\ref{question: 2 steps} for arbitrary rings, but using the free rings in non-commuting variables in place of $\Z[\bar X]$.
	Summarizing, Question~\ref{question: 2 steps} remains open and it is enough to answer it for free rings:
	as we have seen above, the answer is positive for $\Z[X]$ (at least in the sense that \nref{condition:X}{2} holds; we do not know whether \nref{condition:X}{1\frac12} holds), but already for $\Z[X,Y]$ we do not know the answer.

	\subsection{Example: an exotic ring of characteristic 2}
	\label{example:independent_functionals}
	The next example shows that \nref{condition:group_R00}{1} may fail even if $R$ is unital, commutative, and of positive characteristic, hence $1\frac12$ steps in Corollary~\ref{corollary: main corollary} (in particular, in Theorem~\ref{theorem: main algebraic theorem}) is an optimal bound even for such rings, and so $1\frac12$ steps is also optimal in Corollary~\ref{corollary: 3 steps}(2)

	Let $G\coloneqq \Z_2^{\oplus \omega}$. Put $U_n\coloneqq \{\eta \in G: \eta(n)=0\}$ for $n<\omega$. The subgroups $(U_n)_{n<\omega}$ are coset-independent subgroups of index 2 in $G$
	(in the sense that any intersection of cosets of $U_n$, $n \in \omega$, is non-empty, cf.\ Definition~\ref{def:coset_indep}).
	Let $\bar G \succ G$ be a monster model in the pure group language expanded by unary relational symbols for $U_0,U_1,\dots$. Then $(\bar U_n)_n$ are coset independent in $\bar G$, and of index $2$.
	So the linear functionals $f_n \colon \bar G \to \Z_2$ such that $\bar U_n =\ker(f_n)$ for all $n<\omega$ are linearly independent.
	Denote the elements of $\Z_2$ by $0, e$.

	Put $R\coloneqq \bar G \oplus \{0,e\} \oplus \{0,1\}$. We will define a multiplication on $R$ turning it into a unital, commutative ring (with unit 1) in which the finite index subgroup $\bar G$ has the property that $R\bar G$ does not contain an ideal of finite index, i.e.\ \nref{condition:group_R00}{1} fails.

	Note that $\bigcap_{n<\omega} \bar U_n$ is of large cardinality, at least the degree of saturation of $\bar G$.
	Choose a basis $(r_i)_{i<\theta}$ of $\bigcap_{n<\omega} \bar U_n$; then $\theta$ is at least the degree of saturation of $\bar G$.

	Extend $(r_i)_{i < \theta}$ to a basis $(r_i)_{i <\lambda}$ of $\bar G$. First, define $f \colon \{r_i: i<\lambda\} \times \{r_i : i <\lambda\} \to \Z_2$ by
	\[
	f(r_i,r_j)\coloneqq
	\left\{ \begin{array}{ll}
		f_i(r_j) & i<\omega\\
		f_j(r_i) & j < \omega\\
		0 & \textrm{otherwise}.
	\end{array} \right.
	\]

	Since $f_i(r_j)=0$ whenever $i,j<\omega$, $f$ is well-defined. Now, extend it to a function (also denoted by $f$) from $(\{r_i: i<\lambda\} \cup \{e,1\}) \times (\{r_i : i <\lambda\} \cup \{e,1\}) \to \{0,e,1\}$ by:
	\[
	f(r_i,e) = f(e,r_i) = f(e,e)=0, f(r_i,1)=f(1,r_i)=r_i, f(e,1)=f(1,e)=e, f(1,1)=1.
	\]

	Finally, extend this extended $f$ to a unique bilinear function $\cdot \colon R \times R \to \{0,e\} \oplus \{0,1\}$.
	It is easy to check that $\cdot$ is symmetric, associative, and for every $i<\omega$ and $r \in \bar G$, $r_i \cdot r= f_i(r)$,
	so $(R,+,\cdot)$ is a commutative $\Z_2$-algebra, and in particular a ring.

	\begin{lemma}
		\label{lem:exotic_no_iii}
		For the ring $R$ defined above, \nref{condition:group_R00}{1} fails.
	\end{lemma}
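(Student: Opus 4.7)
The plan is to show that $R \cdot \bar G$ contains no finite index two-sided ideal of $R$. First, I would record the multiplicative structure: since $e \cdot h = 0$ for all $h \in \bar G$ and $g \cdot h \in \{0,e\}$ for all $g, h \in \bar G$, the additive subgroup $M \coloneqq \bar G + \{0,e\}$ is a two-sided ideal satisfying $M^3 = 0$ and $R/M \cong \Z_2$. Every element of $R \setminus M$ has the form $1+m$ with $m \in M$ nilpotent, hence is a unit, so $M$ is the unique maximal ideal of $R$. Writing an arbitrary $r \in R$ as $g + \alpha e + \beta \cdot 1$ (with $g \in \bar G$, $\alpha, \beta \in \{0,1\}$) and $h \in \bar G$, we get $r \cdot h = gh + \beta h$, with $gh \in \{0,e\}$ and $\beta h \in \bar G$; in particular $R \cdot \bar G \subseteq M$ and $1 \notin R \cdot \bar G$.

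Next, I would classify the finite index ideals of $R$ contained in $M$. For such an ideal $I$, set $K \coloneqq I \cap \bar G$; this has finite index in $\bar G$, while $\bigcap_{n<\omega}\bar U_n$ has infinite index there, since the $f_n$ are linearly independent. So $K \not\subseteq \bigcap_n \bar U_n$, giving some $x \in K$ and $n<\omega$ with $r_n \cdot x = f_n(x) \cdot e = e \in I$, whence $I = K + \{0,e\}$. Conversely, any $K + \{0,e\}$ with $K \leq \bar G$ of finite index is a two-sided ideal (straightforward from $\bar G \cdot \bar G \subseteq \{0,e\}$ and $e \cdot \bar G = 0$), so these are exactly the proper finite index ideals of $R$.

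The heart of the argument is to produce, for every such $K$, some element of $K + \{0,e\}$ lying outside $R \cdot \bar G$. Decompose $\bar G = F \oplus G_0 \oplus G_1$ along the chosen basis, where $F = \langle r_i : i<\omega \rangle$, $G_0 = \langle r_i : \omega \leq i < \theta \rangle$, $G_1 = \langle r_i : \theta \leq i < \lambda \rangle$. A direct inspection of the four cases in the multiplication table shows that $r_j \cdot k = 0$ for every $k \in G_0$ and every $j$, hence $g \cdot k = 0$ for every $g \in \bar G$ when $k \in G_0$. Now suppose $k \in G_0 \setminus \{0\}$ and $k+e = r \cdot h$ for some $r = g + \alpha e + \beta \cdot 1$ and $h \in \bar G$. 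The $\bar G$-component equation $\beta h = k$ forces $\beta = 1$ and $h = k$, while the $\{0,e\}$-component equation forces $g \cdot k = e$, contradicting $g \cdot k = 0$. Thus $k+e \notin R \cdot \bar G$ for every nonzero $k \in G_0$.

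To finish, $G_0$ is infinite (since $\theta$ is at least the uncountable saturation degree of $\bar G$), so any finite-index $K \leq \bar G$ meets $G_0$ in an infinite subgroup, and in particular contains some $k \neq 0$. Then $k+e \in K + \{0,e\} = I$ but $k+e \notin R \cdot \bar G$, so $I \not\subseteq R \cdot \bar G$. The only other finite index ideal is $R$ itself, excluded because $1 \notin R \cdot \bar G$. Hence \nref{condition:group_R00}{1} fails. The main obstacle is spotting the \emph{middle} layer $G_0$ whose elements multiplicatively annihilate all of $\bar G$, and noticing that the $e$-shifted cosets $k+e$ with $k \in G_0 \setminus \{0\}$ are forced into every candidate finite index ideal yet cannot be written as products $r \cdot h$.
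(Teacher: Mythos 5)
Your argument is correct and essentially the same as the paper's: both proofs show that any finite index ideal $I$ must contain $e$ (using linear independence of the $f_n$) together with some nonzero $t\in \lin(r_i:\omega\le i<\theta)$, and then that $e+t\notin R\cdot\bar G$ because such $t$ annihilates $\bar G$ under the multiplication. The only differences are cosmetic: you obtain $e\in I$ by multiplying an element of $(I\cap\bar G)\setminus\bigcap_{n<\omega}\bar U_n$ by a suitable $r_n$, whereas the paper multiplies an element of $I\cap\lin(r_i: i<\omega)$ by a suitable $s\in\bar G$, and your preliminary classification of the finite index ideals as $R$ or $K+\{0,e\}$ is a harmless extra that the paper's direct argument does not need.
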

	\begin{proof}
		It easily follows from the definitions that
		\begin{equation}
			\label{eq:indep_funct_RG}
			\tag{$*$}
			R \cdot \bar G = \bar G \sqcup \left(e +\bigcup_{r \in \bar G} \bar G \setminus \ker(r \cdot)\right) \sqcup \{e\}.
		\end{equation}

		It remains to show that $R \cdot \bar G$ does not contain an ideal of finite index.
		For that consider any finite index ideal $I$ of $R$. Then $\lin(r_i: i < \omega) \cap I \ne \{0\}$; so take $i_0,\dots,i_{m-1} <\omega$ with $r\coloneqq r_{i_0} +\dots + r_{i_m-1}$ in this intersection. Since $f_{i_0},\dots,f_{i_{m-1}}$ are linearly independent, $g\coloneqq f_{i_0}+\dots +f_{i_{m-1}}$ is non-zero, so we can pick $s \in \bar G \setminus \ker(g)$. Since $g=r\cdot$, we get that $r\cdot s \ne 0$ which implies that $s \cdot r = r \cdot s = e$. Hence, $e \in I$ (as $r \in I$).

		Recall that $(r_i)_{i<\theta}$ is a basis of $\bigcap_n \bar U_n$, and $\theta > \omega + \omega$. Thus, $\lin(r_i: \omega \leq i < \theta) \cap I \ne \{0\}$, so
		we can choose a nonzero $t$ from this intersection.

		By the last two paragraphs, $e+t \in I$. On the other hand, by the construction of $\cdot$, we see that
		$\lin(r_i: \omega \leq i < \theta) \subseteq \bigcap_{r \in \bar G} \ker(r \cdot)$,
		so $t \notin \bigcup_{r \in \bar G} \bar G \setminus \ker(r \cdot)$. Since also $t \ne 0$, by \eqref{eq:indep_funct_RG}, we get $e+t \notin R \cdot \bar G$. We have proved that $e+t \in I \setminus (R \cdot \bar G)$, so $I$ is not contained in $R \cdot \bar G$.
	\end{proof}

	\subsection{Example: the ring \texorpdfstring{$\pmb{X\Z[X]}$}{XZ[X]}}
	\label{example: XZ[X]}
	Let $R$ be the non-s-unital ring $X\Z[X]$. By Theorem~\ref{theorem: bound for fin gen. rings}, condition \nref{condition:X}{2\frac12} holds (although we do not know whether \nref{condition:X}{2} holds). Of course, \nref{condition:group_R00}{1\frac12} holds,
	and we will adapt the proof of Lemma~\ref{lemma:Z[x]_no_iii1} to show that \nref{condition:group_R00}{1} fails.
	But the most important new thing in comparison with the previous examples is
	that  (as we will soon see)
	$\RRiT_{\emptyset} \ne \RRiD_{\emptyset}$, and so \nref{condition:def_R00}{n} fails for all $n$, although $R$ does not have zero divisors. This shows that the additional assumptions about $R$ in items (3) and (4) of Proposition~\ref{proposition: equivalence of finite numbers of steps}
	cannot be dropped (not even for commutative rings without zero divisors).

	\begin{lemma}
		\nref{condition:def_R00}{n} fails for all $n$.
	\end{lemma}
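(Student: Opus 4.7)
The plan is to reduce the failure of \nref{condition:def_R00}{n} for all $n$ to the single inequality $\RRiT_\emptyset \ne \RRiD_\emptyset$, and then to establish the latter by exhibiting a non-profinite compact topological ring quotient of $R$.

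Since $R = X\Z[X]$ is generated, as a (non-unital) ring, by the single element $X$, Theorem~\ref{theorem: bound for fin gen. rings} applied with $n=1$ yields condition \nref{condition:X}{2\frac12}; in particular \nref{condition:X}{m} holds for some $m$. Hence, by Proposition~\ref{proposition: equality of components and fin. many steps}(1), once I know that $\RRiT_\emptyset \ne \RRiD_\emptyset$, the condition \nref{condition:def_R00}{n} will fail for every $n$. So the task reduces to showing that $\RR/\RRiT_\emptyset$, which is the classical ring Bohr compactification of $R$ (since $R$ carries the full structure), is not profinite.

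For the main step, I would fix any irrational $\alpha \in \R$ and equip $\R/\Z$ with the \emph{zero} multiplication $a\cdot b \coloneqq 0$, turning it into a (non-unital) compact Hausdorff topological ring. The map
\[
\phi\!\left(\sum_{i\geq 1} a_i X^i\right) \coloneqq a_1 \alpha \pmod{\Z}
\]
from $R$ to $\R/\Z$ is manifestly additive, and it is multiplicative because for any $P,Q \in X\Z[X]$ the product $PQ$ lies in $X^2\Z[X]$, so $\phi(PQ)=0=\phi(P)\cdot\phi(Q)$; moreover, its image $\alpha\Z + \Z$ is dense in $\R/\Z$ by irrationality of $\alpha$. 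By the universal property of the (ring) Bohr compactification, $\phi$ extends to a continuous ring surjection $\hat\phi\colon \RR/\RRiT_\emptyset \twoheadrightarrow \R/\Z$. Since a continuous homomorphic image of a profinite ring is again profinite, while $\R/\Z$ is connected and nontrivial, $\RR/\RRiT_\emptyset$ cannot be profinite, which is exactly what is needed.

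The main difficulty is precisely the construction of a non-profinite compact ring quotient: this is the first example in the paper where the ring Bohr compactification fails to be profinite, whereas in all earlier examples (fields, $\Z$, $\Z[X]$, etc.) profiniteness was automatic, for instance via s-unitality or positive characteristic. The trick that unlocks the example is to exploit the non-unitality of $X\Z[X]$ together with the inclusion $R\cdot R \subseteq X^2\Z[X]$, which makes the zero multiplication on $\R/\Z$ compatible with the $\Z$-linear functional extracting the degree-one coefficient; it is the absence of a unit that prevents the standard argument (via Pontryagin duality plus Fact~\ref{fact:anzai}) from forcing the Bohr compactification to be profinite.
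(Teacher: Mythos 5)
Your argument is correct, and it follows the spirit of the paper's ``more succinct'' second proof rather than its primary one, while filling in the key step differently. The paper's main proof is hands-on: for each $n$ it picks a thick $F\subseteq\Z$ with $(\bar\Z,+)^{00}_\emptyset\subseteq\bar F$ whose $n$-fold sum contains no finite index subgroup and checks directly that $D\coloneqq FX+XR$ witnesses the failure of \nref{condition:def_R00}{n}; its alternative proof, like yours, reduces everything to $\RRiT_\emptyset\ne\RRiD_\emptyset$ via Proposition~\ref{proposition: equality of components and fin. many steps}(1), but gets that inequality from the type-definable ideal $(\bar\Z,+)^{00}X+X\RR$ together with Fact~\ref{fact:bohrIfOnly}. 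You instead build an explicit compactification: the coefficient-of-$X$ functional composed with an irrational rotation lands in $\R/\Z$ with zero multiplication, is multiplicative because $R\cdot R\subseteq X^2\Z[X]$, and has dense image, so the universal property of the (classical, full-structure) Bohr compactification forces $\RR/\RRiT_\emptyset$ to surject continuously onto a connected nontrivial ring, hence to be non-profinite. This is legitimate within the paper's framework and is essentially an unwinding of Fact~\ref{fact:bohrIfOnly} for $(\Z,+)$; what it buys is a very concrete witness, at the cost of invoking the universal property, whereas the paper's primary proof stays entirely inside the thick-set formulation and is more quantitative (a witness $D$ for each $n$). Two small remarks: your appeal to Theorem~\ref{theorem: bound for fin gen. rings} is superfluous, since Proposition~\ref{proposition: equality of components and fin. many steps}(1) already gives that $\RRiT_\emptyset\ne\RRiD_\emptyset$ alone kills \nref{condition:def_R00}{n} for every $n$; and your closing claim that this is the first non-profinite ring Bohr compactification in the paper is not quite accurate --- the ring $\Z$ with zero multiplication, mentioned in the introduction and Section~\ref{section: conditions}, already has this feature; the novelty the paper emphasizes is that $X\Z[X]$ has no zero divisors. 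Neither point affects the validity of your proof.
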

	\begin{proof}
		Fix $n$.
		By Fact~\ref{fact:bohrIfOnly}, $(\bar \Z,+)^{00}_\emptyset \ne (\bar \Z,+)^0_\emptyset$. So there is a thick subset $F$ of $\Z$ for which $(\bar \Z,+)^{00}_\emptyset \subseteq \bar F$ and such that the $n$-fold sum $F^{+n}$ does not contain a finite index subgroup. Put $D\coloneqq FX + XR$. It is clear that $D$ is thick in $R$ and $\bar D$ contains $\RRaT_\emptyset$, because $\bar D = \bar F X + X\bar R \supseteq (\bar \Z,+)^{00}_\emptyset X + X\bar R$ which is $\emptyset$-type-definable of bounded index.
		On the other hand, for $i \in \{\leftarrow,\rightarrow,\leftrightarrow\}$ we have $D_i = D$, so $D_i^{+n}=D^{+n} = F^{+n}X +XR$, which does not contain a finite index subgroup of $(R,+)$ (because $F^{+n}$ does not contain a finite index subgroup of $(\Z,+)$). So \nref{condition:def_R00}{n} fails.
	\end{proof}

	It is clear that the above lemma remains true for $XS[X]$, where $S$ is any commutative ring of characteristic $0$ (because we only need to know that $S$ satisfies $(\bar S,+)^{00}_\emptyset \ne (\bar S,+)^{0}_\emptyset$).

	A more succinct way to show the lemma above is as follows.

	Note that $I\coloneqq (\bar\Z,+)^{00}X+X\RR$ is a bounded index, $\emptyset$-type-definable ideal of $\RR$, and $(\RR/I,+) \cong \bar \Z/(\bar \Z,+)^{00}$ is not profinite, and thus neither is $\RR/\RRiT_{\emptyset}$. Therefore, $\RRiT_{\emptyset} \ne \RRiD_{\emptyset}$, and so \nref{condition:def_R00}{n} fails for all $n$ by Proposition~\ref{proposition: equality of components and fin. many steps}(1).

	We have also shown that $R$ is an example of a finitely generated, commutative ring with no zero divisors whose Bohr compactification $\RR/\RRiT_\emptyset$ is not profinite.

	\begin{lemma}
		\nref{condition:group_R00}{1} fails.
	\end{lemma}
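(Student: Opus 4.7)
The plan is to adapt the proof of Lemma~\ref{lemma:Z[x]_no_iii1}, using the natural analog of the subgroup employed there:
\[ H \coloneqq 2R + \langle X^n - X^m : n, m \geq 1,\ \text{both prime or both non-prime}\rangle \leq (R,+), \]
which has index (at most) $4$ in $R$ by essentially the same computation as before (every element of $R$ reduces mod $H$ to one of $0, X, X^2, X+X^2$). The decisive new ingredient is that since $R = X\Z[X]$ has no unit, any product $rh$ with $r, h \in R$ has $X$-adic valuation at least $2$; in particular $R\cdot H \subseteq R\cdot R \subseteq X^2\Z[X]$. Consequently, any $P \in I$ with $v_X(P) = 1$ that additionally lies outside $H$ automatically lies outside $(R\cup\{1\})\cdot H = H \cup R\cdot H$, yielding the desired contradiction.

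To produce such a $P$, I would first establish the analog of Claim~1 from the proof of Lemma~\ref{lemma:Z[x]_no_iii1}: if $[R:I] \leq m$, then $m!R \subseteq I$ and $X^{m!+m} - X^m \in I$. The first inclusion is immediate from Lagrange; for the second, pigeonhole applied to $X, X^2, \ldots, X^{m+1}$ in $R/I$ produces $X^{i+k} - X^i \in I$ with $1 \leq i, k \leq m$, then telescoping $\sum_{j=0}^{\ell-1} X^{jk}(X^{i+k}-X^i)$ with $\ell = m!/k$ gives $X^{i+m!} - X^i \in I$, and finally multiplying by $X^{m-i}\in R$ (or doing nothing if $i=m$) yields $X^{m!+m}-X^m\in I$. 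Some mild care is needed since $1\notin R$, but each step only multiplies by some $X^j\in R$ with $j\geq 1$.

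Fixing a prime $m \geq [R:I]$, I would then take $P \coloneqq m!X + (X^{m!+m} - X^m)\in I$, and check (i) $v_X(P) = 1$ since $[X^1]P = m! \neq 0$, so $P \notin R\cdot R \supseteq R\cdot H$; and (ii) $P \notin H$, by reducing mod $2R$ to get $P \equiv X^m + X^{m!+m} \pmod{2R}$ (as $m\geq 2$ makes $m!X$ vanish), and invoking the analog of Claim~4: since $m$ is prime and $m!+m = m(1+(m-1)!)$ is composite (as $1+(m-1)!\geq 2$), both $\FF{2}$-sums of coefficients of $P$ over prime and over non-prime exponents equal $1$, so $P \notin H$.

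The main obstacle is the choice of $P$. The direct analog $X\cdot Q_{m,p}$ of the polynomial from Lemma~\ref{lemma:Z[x]_no_iii1} lands in $H$, because for an odd prime $m$ both exponents $m+1$ and $m!+m+1$ are even and composite, placing the two nontrivial terms in the same parity class; the replacement $P$ above sidesteps this by using the exponents $m$ (prime) and $m!+m$ (non-prime), which lie in opposite parity classes.
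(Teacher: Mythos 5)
Your proposal is correct and follows essentially the same route as the paper: you use the same subgroup $H$, and your polynomial $P = m!X + (X^{m!+m}-X^m)$ is exactly the paper's $Q''_m(X) = X^{m!+m}-X^m+m!X$, shown to lie in every finite index ideal (for prime $m\geq [R:I]$) while avoiding $H\cup R\cdot H$ via the valuation observation $R\cdot H\subseteq X^2\Z[X]$ and the parity criterion for membership in $H$. The only difference is that you spell out the adaptations of Claims 1--4 of Lemma~\ref{lemma:Z[x]_no_iii1} (handling the absence of $1$ in $R$), which the paper leaves implicit.
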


	\begin{proof}
		Similarly to the analysis of $\Z[X]$ in Lemma~\ref{lemma:Z[x]_no_iii1}, consider
		\[
		H\coloneqq 2R+\langle X^n-X^m : n,m>0 \textrm{ are both prime or both not prime}\rangle.
		\]
		As there, $H$ is a subgroup of $(R,+)$ with $[R:H]=4$.

		Consider the polynomials $Q''_m(X)\coloneqq X^{m!+m}-X^m+m!X$, and fix a finite index ideal $I\unlhd R$.
		Arguing as in Claims~1 and 2 in the proof of Lemma~\ref{lemma:Z[x]_no_iii1}, for any $m\geq [R:I]$ we have $Q''_m(X)\in I$. On the other hand, $Q''_m(X)\notin R\cdot R$, so $Q''_m(X)\notin R\cdot H$. Also, if $m$ is prime, then (arguing as in Claims~3 and 4 in the proof of Lemma~\ref{lemma:Z[x]_no_iii1}) $Q''_m(X)\notin H$. Therefore, $Q''_m(X)\notin H\cup R\cdot H=(R\cup\{1\})\cdot H$. Since there are arbitrarily large primes, it follows that we have a prime $m$ such that $Q''_m(X)\in I\setminus ((R\cup \{1\})\cdot H)$.
	\end{proof}

	\subsection{Example: a nilpotent ring}
	\label{example: RHR not enough}
	The next example shows that in Corollary~\ref{corollary: main corollary}(iii) one cannot replace $R \cup \{1\}$ by $R$, and in the last item of Corollary~\ref{corollary: 3 steps}(2) one can not replace $\bar R \cup \{1\}$ by $\bar R$.

	Let $R$ be the free commutative, nilpotent ring of nilpotency class 3 and of characteristic 2 in infinitely many generators $X_0,X_1,\dots$. (Recall that a ring is nilpotent of class 3 if the product of any three elements equals $0$.) The elements of $R$ are of the form
	\[
	\sum_{i \geq 0} a_iX_i + \sum_{i \geq 0} a_{0i}X_0X_i + \sum_{i \geq 1}a_{1i}X_1X_i + \sum_{i \geq 2}a_{2i}X_2X_i + \dots
	\]
	with all coefficients in $\Z_2$.
	Below $2^j\N_{>0}$ denotes the positive integers divisible by $2^j$.
	Define a group homomorphism $h \colon (R,+) \to \Z_2$ by
	\[
	h\left( \sum_{i \geq 0} a_iX_i + \sum_{i \geq 0} a_{0i}X_0X_i + \sum_{i \geq 1}a_{1i}X_1X_i + \dots\right) = \sum_{i \geq 0} a_{2i+1} + \sum_{j \geq 0} \sum_{i \in 2^j\N_{>0}} a_{ji}.
	\]
	Let $H\coloneqq \ker(h)$, a subgroup of $(R,+)$ of index 2.

	\begin{lemma}
		\label{lemma:nilpotent_RHR}
		$H$ does not contain any ideal of finite index.
	\end{lemma}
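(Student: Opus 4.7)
My plan is to argue by contradiction: suppose that $I$ is a finite-index ideal of $R$ with $I\subseteq H$, and I will produce an element of $I$ that is not in $H$.

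The starting point is a pigeonhole step. Since $R/I$ is finite but there are infinitely many generators $X_0,X_1,\ldots$, there exist $k<l$ with $X_k\equiv X_l\pmod{I}$, and hence $X_k+X_l\in I$. Multiplying by an arbitrary further generator $X_m$, which is allowed since $I$ is an ideal, I obtain $X_m X_k+X_m X_l\in I\subseteq H$, so $h(X_m X_k+X_m X_l)=0$ must hold for every choice of $m$.

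The heart of the argument is to exploit the $2$-adic structure of $h$ on quadratic monomials: unpacking the definition, for indices $i\leq j$ with $j>0$ one has $h(X_i X_j)=1$ if and only if $2^i$ divides $j$. I would then choose $m>l$ with $v_2(m)=k$ exactly; for instance, $m=2^k(2t+1)$ for a large enough $t$ works. For such $m$ we have $\min(k,m)=k$ and $2^k\mid m$, which forces $h(X_k X_m)=1$; on the other hand $\min(l,m)=l$ but $v_2(m)=k<l$, so $2^l\nmid m$ and $h(X_l X_m)=0$. Adding these gives $h(X_m X_k+X_m X_l)=1$, contradicting the conclusion of the previous paragraph.

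The main (mild) obstacle is simply reading off the combinatorial description of $h$ on quadratic terms correctly---in particular handling the cases $i=0$ and $i=j$ and checking that the indexing constraint $i\leq j$ built into $a_{ji}$ is automatic whenever $2^i\mid j$ and $j>0$ (this holds since $2^i>i$ for all $i\geq 0$). Once that is checked, both ingredients above are immediate. Note that nilpotency of class $3$ is used only implicitly, to make sense of $R$ and of $h$; the contradiction itself involves only quadratic products, with no need to pass to cubes.
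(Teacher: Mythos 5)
Your proof is correct, and it takes a genuinely more direct route than the paper. The paper first reduces the lemma to showing that $\Stab(H)=\{r\in R: rH\subseteq H\}$ has infinite index in $(R,+)$ (any finite-index ideal contained in $H$ would sit inside $\Stab(H)$), and then, for every nonzero sum $f$ of odd-indexed generators, exhibits $g=X_{2^{i_0}}\in H$ with $h(fg)=1$; this is where the linear part of $h$ enters, since $g$ must be certified to lie in $H$. You instead work directly with a hypothetical finite-index ideal $I\subseteq H$: pigeonhole gives $X_k+X_l\in I$ with $k<l$, and then multiplying by a single generator $X_m$ with $m>l$ and $v_2(m)=k$ produces $X_kX_m+X_lX_m\in I$ on which $h$ evaluates to $1$, a contradiction. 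Because you multiply by an arbitrary ring element (using only that $I$ is an ideal) rather than by an element of $H$, you never need the linear part of $h$ or the stabilizer reduction; the shared engine in both arguments is the $2$-adic divisibility pattern $h(X_iX_j)=1 \iff 2^i\mid j$ (for $i\le j$, $j>0$), which you state and apply correctly, including the observation that the ordering constraint is automatic. The paper's version yields the slightly stronger byproduct that $\Stab(H)$ has infinite index (witnessed by the span of the odd-indexed generators), but for the lemma as stated your argument is complete and shorter.
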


	Before the proof, let us see that this lemma yields the desired properties postulated
	above.
	By 3-nilpotency, $H + R\cdot H\cdot R = H$ and, by Lemma \ref{lemma:nilpotent_RHR}, this group does not contain a finite index ideal, so in Corollary~\ref{corollary: main corollary}(iii) one cannot replace $R \cup \{1\}$ by $R$. Let us explain how it implies that in the last item of Corollary~\ref{corollary: 3 steps}(2) one can not replace $\bar R \cup \{1\}$ by $\bar R$. First of all, $\RRaT_\emptyset + \RR\cdot \RRaT_\emptyset \cdot \RR= \RRaT_\emptyset = (\RR,+)^{0}_\emptyset$, where the last equality follows from Corollary~\ref{corollary: finite exponent implies 00=0}. Secondly, $(\RR,+)^{0}_\emptyset \ne \RR^0_\emptyset$, as otherwise $H$ (which is $\emptyset$-definable in the full structure) would contain a finite index ideal. Since $\RR^0_\emptyset = \RR^{00}_\emptyset$ (by Corollary~\ref{corollary: 0=00 in rings}), we get $\RRaT_\emptyset + \RR\cdot \RRaT_\emptyset \cdot \RR \ne \RR^{00}_\emptyset$. So it is enough to prove Lemma~\ref{lemma:nilpotent_RHR}.

	\begin{proof}[Proof of Lemma~\ref{lemma:nilpotent_RHR}]
		Let $\Stab(H)\coloneqq \{ r \in R : r\cdot H \subseteq H\}$. By \cite[Lemma 3.13]{GJK22}, it is enough to show that $\Stab(H)$ has infinite index in $(R,+)$ (actually this sufficiency is easy to see: if $H$ contains a finite index ideal, then $\RR^0_\emptyset \subseteq \bar H$, but $\RR^0_\emptyset$ is a two-sided ideal, so $\RR^0_\emptyset \cdot \bar H \subseteq \RR^0_\emptyset \subseteq \bar H$; thus $\Stab(\bar H)$ is of bounded index, and so of finite index as it is definable; and clearly $\Stab(\bar H)$ is the interpretation of $\Stab(H)$ in the monster model).

		In order to see that the index $[R:\Stab(H)]$ is infinite, it is enough to show that each
		sum $\sum_{2\nmid i} a_{i}X_i$
		(where not all $a_{i}$ are $0$) does not belong to $\Stab(H)$.

		Consider any $f =X_{i_0} + \dots + X_{i_m}$, where $1 \leq i_0 < \dots < i_m$ are odd numbers. Let $g\coloneqq X_{2^{i_0}}$. Then $g \in H$, and we will show that $f \cdot g \notin H$. This implies that $f \notin \Stab(H)$, so we will be done.

		We have $f \cdot g = X_{i_0} X_{2^{i_0}} + X_{i_1}X_{2^{i_0}} + \dots + X_{i_m}X_{2^{i_0}}$. For every $j>0$:
		\begin{itemize}
			\item if $i_j \leq 2^{i_0}$, then $h(X_{i_j}X_{2^{i_0}})=0$, because $2^{i_j} \nmid 2^{i_0}$ (as $i_0 <i_j$),
			\item if $i_j > 2^{i_0}$, then $h(X_{i_j}X_{2^{i_0}})=0$, because $2^{2^{i_0}} \nmid i_j$ (as $i_j$ is odd).
		\end{itemize}
		Also, $h(X_{i_0} X_{2^{i_0}})=1$. Therefore, $h(f \cdot g) =1$, so $f \cdot g \notin H$.
	\end{proof}

	\subsection{Example: the rings \texorpdfstring{$\Z_q^{\oplus \omega}$}{Zq⊕ω} and \texorpdfstring{$\Z_q^{\omega}$}{Zqω}}
	To study the next series of examples, we need to prove a lemma from linear algebra. For $G\coloneqq \Z_q^{\oplus N}$
	(i.e.\ the ring of finitely supported functions from $N$ to $\Z_q$, with $N \leq \omega$, $q\in \omega$, where $\Z_0\coloneqq \Z$)
	and any $i<N$, let $G_i$ be the subgroup of all elements with the support contained in $i+1$. For a subgroup $H$ of $G$, put $H_i\coloneqq G_i \cap H$.

	\begin{lemma}\label{lemma: finite index subgroups of free abelian groups}
		For each finite index subgroup $H$ of $G\coloneqq \Z_q^{\oplus N}$ (where $N \leq \omega$ and $q \in \omega$) there are $g_i \in G_i$, $i<N$, such that $g_i(i)$ is not invertible in $\Z_q$ for less than $[G:H]$ coordinates $i<N$, and $H_i= \sum_{j \leq i} \Z_q\cdot g_j$ for all $i<N$, and $H = \sum_{i <N} \Z_q \cdot g_i$. Moreover, if $q=0$, then $g_i(i) \ne 0$ for all $i<N$ (so $H = \bigoplus_{i \in \omega} \mathbb{Z}\cdot g_i$ and $H_i =\bigoplus_{j \leq i} \mathbb{Z}\cdot g_j$).
	\end{lemma}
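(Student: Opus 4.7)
The plan is to build the $g_i$ by a Smith-normal-form-style echelon procedure, inducting on $i$. For each $i<N$, let $\pi_i\colon G_i\to\Z_q$ denote the $i$-th coordinate projection; then $\pi_i(H_i)$ is a cyclic subgroup of $\Z_q$, so I can pick $d_i\in\Z_q$ generating it and choose $g_i\in H_i$ with $\pi_i(g_i)=d_i$ (taking $g_i\coloneqq 0$ if $\pi_i(H_i)=0$). The core step is verifying the echelon identity $H_i=H_{i-1}+\Z_q\cdot g_i$: for any $h\in H_i$, one has $\pi_i(h)=a\,d_i$ for some $a\in\Z_q$, so $h-a\,g_i\in H_i$ has vanishing $i$-th coordinate and hence lies in $H_{i-1}$. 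Iterating gives $H_i=\sum_{j\le i}\Z_q\cdot g_j$, and passing to the union $H=\bigcup_i H_i$ in the case $N=\omega$ yields $H=\sum_{i<N}\Z_q\cdot g_i$.

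For the counting statement, I would extract a telescoping index formula from the construction. The kernel-image sequence applied to the restriction of $\pi_i$ to $H_i$ gives $[H_i:H_{i-1}]=|\pi_i(H_i)|$, and comparing with $[G_i:G_{i-1}]=|\Z_q|$ yields the recursion $[G_i:H_i]=[G_{i-1}:H_{i-1}]\cdot [\Z_q:\Z_q\cdot d_i]$, so $[G_i:H_i]=\prod_{j\le i}[\Z_q:\Z_q\cdot d_j]$. Each factor equals $1$ exactly when $d_j$ is invertible in $\Z_q$, and is otherwise at least $2$ (the case $q=1$ being trivial). Since $[G_i:H_i]=[G_i+H:H]\le[G:H]$ and this supremum equals $[G:H]$ (by $\bigcup_i G_i=G$), I obtain $[G:H]\ge 2^m$, where $m$ is the total number of $i<N$ with $d_i$ non-invertible; as $2^m>m$ for all $m\ge 0$, this gives $m<[G:H]$, which is the required bound.

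For the \emph{moreover} clause, assume $q=0$ and put $n\coloneqq [G:H]$. Then $nG\subseteq H$, so $n\,e_i\in H_i$ for each $i<N$, where $e_i$ is the standard generator of $\Z^{\oplus N}$ supported at $i$; hence $\pi_i(H_i)\supseteq n\Z\neq 0$, forcing $d_i=g_i(i)\neq 0$. Torsion-freeness of $\Z$ then promotes each step of the induction to a direct sum, yielding $H_i=\bigoplus_{j\le i}\Z\cdot g_j$ and, in the limit, $H=\bigoplus_{i<N}\Z\cdot g_i$. The point that requires the most care, though I expect it to be bookkeeping rather than a genuine obstacle, is the infinite-$N$ case: the uniform bound $[G_i:H_i]\le[G:H]<\infty$ forces all but finitely many $d_i$ to be units, so the product formula for $[G:H]$ really involves only finitely many non-trivial factors and the counting inequality goes through.
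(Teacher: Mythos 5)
Your proof is correct and follows essentially the same route as the paper: an inductive echelon construction of the $g_i$ along the filtration $(G_i)_{i<N}$ (your choice of $g_i$ via a generator $d_i$ of the cyclic group $\pi_i(H_i)$ is just a cleaner way of doing the paper's Gaussian row reduction), with the count of non-invertible diagonal entries read off from the growth of $[G_i:H_i]\le[G:H]$ --- the paper notes that each non-invertible $g_i(i)$ forces $[G_i:H_i]>[G_{i-1}:H_{i-1}]$, you that it contributes a factor $[\Z_q:\Z_q d_i]\ge 2$, which is the same phenomenon. One cosmetic point: when $q=0$ the recursion $[G_i:H_i]=[G_{i-1}:H_{i-1}]\cdot[\Z_q:\Z_q d_i]$ should not be obtained by ``dividing by $|\Z_q|$'' (which is infinite there, as is $[H_i:H_{i-1}]$ once $d_i\neq 0$), but rather from $[G_i:H_i]=[G_i:G_{i-1}+H_i]\cdot[G_{i-1}+H_i:H_i]$ together with the isomorphisms $G_i/(G_{i-1}+H_i)\cong \Z_q/\Z_q d_i$ and $(G_{i-1}+H_i)/H_i\cong G_{i-1}/H_{i-1}$; with that standard fix the argument goes through verbatim.
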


	\begin{proof}
		We construct $g_i$ by induction on $i$.

		Since $G_i$ is an abelian group generated by $i+1$ elements, $H_i$ is generated by $i+1$ elements as well. In particular, $H_0$ is generated by one element, which we take as our $g_0$. Suppose $g_0, \dots,g_{i-1}$ are constructed. Choose $i+1$ elements generating $H_i$. Write these elements as rows of an $(i+1) \times (i+1)$ matrix $M$.
		Via Gaussian row reduction, transform $M$ to a lower triangular matrix.
		The last row of the resulting matrix yields an element $g_i \in H_i$. The construction is finished. We need to check that the constructed $g_i$, $i < N$, satisfy our requirements.

		The fact that $H_i= \sum_{j \leq i} \mathbb{Z}_q\cdot g_j$ for all $i<N$ and $H = \sum_{i <N} \mathbb{Z}_q\cdot g_i$ is clear.

		Assume that $g_i(i)$ is not invertible; denote it by $a_i$. Let $e_i$ be the element of $G$ given by $e_i(i)=1$ and $e_i(j) =0$ for $j \ne i$. Since $H_i = H_{i-1} + \mathbb{Z}_q g_i \leq G_{i-1} \oplus a_i\mathbb{Z}_qe_i$, if we choose representatives $r_0,\dots,r_{m-1}$ of all
		cosets of $H_{i-1}$ in $G_{i-1}$, then $r_0,\dots,r_{m-1}$ and $e_i$ lie in pairwise distinct cosets of $H_i$ in $G_i$. Hence, $[G_{i}: H_i] > [G_{i-1}: H_{i-1}]$. Therefore, since $[G:H] \geq [G_i:H_i] $ for all $i<N$, there are less than $[G:H]$ $i$'s with $g_i(i)$ not invertible.

		In the case when $q=0$, if $g_i(i) =0$ for some $i<N$, then $H_i =\sum_{j \leq i} \Z \cdot g_j \subseteq G_{i-1}$, so $[G:H] \geq [G_i:H_i] \geq [G_i:G_{i-1}]= \aleph_0$, a contradiction.
	\end{proof}

	\begin{proposition}\label{prop:Z_q_plus}\,
		\begin{enumerate}[topsep=0pt]
			\item Let $q \in \omega$. For $R\coloneqq \Z_q^{\oplus \omega}$, \nref{condition:group_R00}{1} holds. Thus
			(by Proposition~\ref{proposition: equivalence of finite numbers of steps}), for $q >0$, \nref{condition:X}1 holds as well, and for $q=0$ (i.e.\ $R= \Z^{\oplus \omega}$), \nref{condition:X}{2} holds.
			\item Let $q>0$. For $R\coloneqq \Z_q^\omega$, \nref{condition:group_R00}{1} holds. Thus, \nref{condition:X}{1} holds as well. For $R\coloneqq \Z^\omega$, \nref{condition:X}{2} holds.
		\end{enumerate}
	\end{proposition}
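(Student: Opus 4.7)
The common thread is Lemma~\ref{lemma: finite index subgroups of free abelian groups}, applied to $H$ in~(1) and to $H\cap\Z_q^{\oplus\omega}$ in~(2) (still finite index in the latter). This produces a triangular basis $(g_i)_{i\in\omega}$ with $g_i\in G_i$ and $g_i(i)=a_i$ invertible in $\Z_q$ for all $i\ge N$ (some $N$ depending on $H$). After rescaling by $a_i^{-1}$ so that $a_i=1$ for $i\ge N$ and then row-reducing within $H$ (subtracting integer multiples of $g_j$ for $N\le j<i$ from $g_i$), I obtain $g_i''\in H$ with support in $\{0,\ldots,N-1\}\cup\{i\}$ and $g_i''(i)=1$. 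The key identity is that, for any finite $F\subseteq[N,\infty)$ and any $c_i\in\Z_q$, taking $r=\sum_{i\in F}c_ie_i\in R$ and $h=\sum_{i\in F}g_i''\in H$ yields $r\cdot h=\sum_{i\in F}c_ie_i$; the pointwise check uses that $g_i''(j)=0$ for $N\le j<i$, so at each $j\in F$ only $g_j''$ contributes to $h(j)$, and hence $h(j)=1$ there.

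For part~(1) with $q>0$, this immediately exhibits the finite-index ideal $\bigoplus_{i\ge N}\Z_qe_i$ (of index $q^N$) as a subset of $R\cdot H$, establishing \nref{condition:group_R00}{1}; Proposition~\ref{proposition: equivalence of finite numbers of steps}(3) then upgrades this to \nref{condition:X}{1}. For part~(2) with $q>0$, the product ring $R=\Z_q^\omega$ supplies the additional element $\mathbb{1}_{[N,\infty)}$, giving the cleaner formula $e_i=\mathbb{1}_{[N,\infty)}\cdot g_i''\in R\cdot H$ for each $i\ge N$; I would combine this with the finite-$F$ construction (and handle elements of infinite support by picking $h\in H$ with $h(j)=1$ on $\mathrm{supp}(x)\cap[N,\infty)$) to show that the finite-index ideal $\{x\in R:x(j)=0\text{ for all }j<N\}$ (of index $q^N$) lies in $R\cdot H$, again giving \nref{condition:group_R00}{1} and hence \nref{condition:X}{1}.

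The delicate case is $q=0$. For $R=\Z^{\oplus\omega}$, the ideal $\bigoplus_{i\ge N}\Z e_i$ is no longer of finite index, so a contribution at each $j<N$ must be added. Letting $m>0$ be such that $m\Z^N\subseteq H\cap G_{N-1}$ (exists since the latter is finite index in $\Z^N$), I consider $I=\bigoplus_{j<N}m'_j\Z e_j+\bigoplus_{i\ge N}\Z e_i$ for multiples $m'_j$ of $m$ chosen sufficiently large to resolve divisibility obstructions. The main obstacle is verifying $I\subseteq R\cdot H\cup H$: for $x\in I$ the earlier construction gives an $h\in H$ with the right values at $j\ge N$, but at each $j<N$ the value $h(j)$ lies in a fixed coset $\alpha_j+m\Z$ determined by the error terms $y_i=g_i''-e_i\in\Z^N$ of the (finitely many) $g_i''$ that appear, and one needs $m'_j$ chosen so that this coset always contains a divisor of $m'_jk_j$ for every $k_j\in\Z$; this is a finite quantitative condition controlled by the $y_i$'s of the basis. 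Because $\Z^{\oplus\omega}$ is s-unital (the support indicator of any $r\in R$ fixes $r$), Proposition~\ref{proposition: equivalence of finite numbers of steps}(4) then upgrades \nref{condition:group_R00}{1} to \nref{condition:X}{2}. The case $R=\Z^\omega$ in~(2) is handled analogously, with $\mathbb{1}_{[N,\infty)}$ providing extra flexibility, and there only \nref{condition:X}{2} (not \nref{condition:group_R00}{1}) is claimed.
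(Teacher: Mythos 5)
Your part (1) is essentially the paper's own argument and is correct for $q>0$: the triangular basis from Lemma~\ref{lemma: finite index subgroups of free abelian groups}, cleaned up to elements $g_i''\in H$ with a single unit entry above level $N$, gives the ideal $\bigoplus_{i\geq N}\Z_q e_i\subseteq R\cdot H$, and Proposition~\ref{proposition: equivalence of finite numbers of steps} finishes exactly as in the paper. For $q=0$ your sketch is in the same spirit as the paper's (which instead reduces each coordinate $j<n$ inside $H$ to a nonzero value $a_j$ with $|a_j|\leq|g_j(j)|$ and takes the ideal $(\Z a)^n\oplus\Z^{\oplus\omega\setminus n}$ with $a$ a factorial), but your key step is only asserted: the coset of $h(j)$ modulo $m$ is not ``fixed'' --- it varies with the target element --- and what you actually need is that $m_j'$ can be chosen (a multiple of $m$) so that \emph{every} residue class mod $m$ contains a nonzero divisor of $m_j'$, e.g.\ $m_j'=m(m+1)\cdots(2m-1)$; this is elementary but is the whole point and is not verified in your write-up.

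The genuine gap is part (2). All the data you extract comes from $H\cap\Z_q^{\oplus\omega}$, i.e.\ from finite-support elements of $H$, and this cannot control $R\cdot H$ for a finite index subgroup $H$ of the full product $\Z_q^{\omega}$. Concretely, for $q=2$ identify $R=\Z_2^{\omega}$ with $(P(\omega),\vartriangle,\cap)$ and let $H$ be the kernel of the homomorphism $x\mapsto\lim_U x$ for a nonprincipal ultrafilter $U$ on $\omega$. Then $H\supseteq\Z_2^{\oplus\omega}$, so your $N$ is $0$ and your claimed ideal $\{x: x(j)=0 \text{ for } j<N\}$ is all of $R$; yet the all-ones element $\mathbf 1$ is not in $R\cdot H$, since $r\cdot h=\mathbf 1$ forces $h=\mathbf 1\notin H$. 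In particular your step ``pick $h\in H$ with $h(j)=1$ on $\mathrm{supp}(x)\cap[N,\infty)$'' is unjustified when the support is infinite (the $g_i''$ only give such $h$ for finite supports), and in this example no such $h$ exists for $x=\mathbf 1$. (The proposition itself is not threatened --- here $H$ is an ideal of index $2$ --- but your construction does not find any suitable ideal.) The paper's proof of (2) is genuinely different: for each finite tuple $\bar r$ of elements of $R$ it forms the finite subring $R_{\bar r}$ generated by the characteristic functions of the atoms of the Boolean algebra generated by the level sets of the $r_i$'s, applies the lemma inside $R_{\bar r}$ to produce an ideal of $R_{\bar r}$ of index $<q^{[R:H]}$ contained in $R_{\bar r}\cdot(H\cap R_{\bar r})$, and then glues these by an inverse-limit (compactness) argument into a finite index ideal of $R$ contained in $R\cdot H$; for $\Z^{\omega}$ it deduces \nref{condition:X}{2} by using thickness to find $q>0$ with $qR\subseteq R\cdot D$ and invoking the $\Z_q^{\omega}$ case for $R/qR$. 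Your ``handled analogously'' for $\Z^{\omega}$ inherits the same gap, so part (2) of your proposal is not established.
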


	\begin{proof}
		(1) Consider any finite index subgroup $H$ of $G\coloneqq (R,+)$.
		By Lemma~\ref{lemma: finite index subgroups of free abelian groups}, there is $n \in \omega$ and elements $g_i \in G_i$ for $i<\omega$ such that $g_i(i)$ is invertible in $\Z_q$ for all $i\geq n$ and $H = \sum_{i<\omega} \Z_q g_i$; if $q=0$, we also have $g_i(i) \ne 0$ for all $i<\omega$.

		First, consider the case $q>0$. Fix $m>n$. Then $(\Z_q^{n} \times \{1\}^{m-n} \times \{0\}^{\omega \setminus m}) \cap H \ne \emptyset$, and so $\{0\}^n \times \Z_q^{m-n} \times \{0\}^{\omega \setminus m} \subseteq R\cdot H$. Therefore, the finite index ideal $\{0\} \oplus \dots \oplus \{0\} \oplus \Z_q^{\oplus \omega \setminus n}$ of $R$ is contained in $R\cdot H$,
		so we have \nref{condition:group_R00}{1}.

		Now, assume $q=0$. Fix $m>n$. Then for every $i <n$ there exists $a_i \ne 0$ with $|a_i| \leq |g_i(i)|$ and $(a_0,\dots,a_{n-1}, 1,\dots,1,0,0,\dots) \in H$ (where there are $m-n$ ones). Put $a\coloneqq (g_0(0)\dots g_{n-1}(n-1))!$. Then, $(\Z a)^n \times \Z^{m-n} \times \{0\}^{\omega \setminus m} \subseteq R\cdot H$. Therefore, the finite index ideal $(\Z a)^n \oplus \Z^{\oplus \omega \setminus n}$ is contained in $R\cdot H$, so we have proved \nref{condition:group_R00}{1}.

		(2) Consider first the case of $q>0$. We can, of course, assume that $q>1$. Consider any finite index subgroup $H$ of $(R,+)$.

		For a finite set $\bar r=\{r_0,\dots,r_{n-1}\}$ of elements of $R$, let $B_{\bar r}=\{B_0,\dots,B_{m-1}\}$ be the collection of all atoms of the Boolean algebra of subsets of $\omega$ generated by the finite family $\{ r_i^{-1}(j): i<n, j<q\}$. Then put $S_{\bar r}\coloneqq \{\chi_0,\dots,\chi_{m-1}\}$, where $\chi_i \in R$ is the characteristic function of $B_i$. Finally, let $R_{\bar r}$ be the subgroup of $R$ generated by $S_{\bar r}$. Then $R_{\bar r}$ is a finite subring of $R$ containing $\bar r$.

		\begin{clm*}
			For every finite subset $\bar r $ of $R=\Z_q^\omega$ there is an ideal $I_{\bar r}$ of $R_{\bar r}$ of index smaller than $q^{[R:H]}$ which is contained in $R_{\bar r}\cdot(H \cap R_{\bar r})$.
		\end{clm*}

		\begin{clmproof}
			Since $R_{\bar r} = \Z_q \chi_0 \oplus \dots \oplus \Z_q \chi_{m-1}$ is naturally identified with $\Z_q^m$, we can choose $(g_i)_{i<m}$ provided by the conclusion of Lemma~\ref{lemma: finite index subgroups of free abelian groups} (applied to the subgroup $H \cap R_{\bar r}$ of $R_{\bar r}$ and $N\coloneqq m$). Let $Z_{\bar r}\coloneqq \{i<m: g_i(i) \textrm{ is not invertible}\}$. By the choice of the $g_i$'s and the fact that $[R_{\bar r}:R_{\bar r} \cap H] \leq [R:H]$, we have that $|Z_{\bar r}| < [R:H]$.

			Define $I_{\bar r}$ as the subgroup of $(R_{\bar r},+)$ generated by $\{\chi_i: i \notin Z_{\bar r}\}$. It is easy to see that it is an ideal of $R_{\bar r}$, $[R_{\bar r} :I_{\bar r}]< q^{[R:H]}$, and $I_{\bar r} \subseteq R_{\bar r}(H \cap R_{\bar r})$.
		\end{clmproof}

		For any finite $\bar r$ in $R$, put
		\[
		\mathcal{I}_{\bar r} \coloneqq \{ I \unlhd R_{\bar r}: [R_{\bar r} :I] < q^{[R:H]} \textrm{ and } I \subseteq R\cdot H\}.
		\]
		By Claim 1, $\mathcal{I}_{\bar r}$ is finite and non-empty for every finite $\bar r$ in $R$.

		For $\bar r' \subseteq \bar r$, we have $R_{\bar r'} \leq R_{\bar r}$ and we have the restriction map $\pi_{\bar r \bar r'} \colon \mathcal{I}_{\bar r} \to \mathcal{I}_{\bar r'}$ given by $\pi_{\bar r \bar r'}(I)=I \cap R_{\bar r'}$.

		So we can find
		\[
		(J_{\bar r})_{\bar r} \in \lim_{\longleftarrow} \mathcal{I}_{\bar r},
		\]
		and put $I\coloneqq \bigcup_{\bar r} J_{\bar r}$.

		It is straightforward to check that $I \unlhd R$, $[R:I] < q^{[R:H]}$, and $I \subseteq R\cdot H$. Thus, we have proved \nref{condition:group_R00}{1}, and so \nref{condition:X}{1} follows by Proposition~\ref{proposition: equivalence of finite numbers of steps}.

		To show \nref{condition:X}{2} for $R\coloneqq \Z^{\omega}$, note that if $D$ is thick in $R$, then $\{q\}^\omega \in D$ for some $q>0$, so $qR=(\Z \cdot q)^{\omega} \subseteq R\cdot D$.
		On the other hand, since \nref{condition:X}{1} holds for $\Z_q^\omega \cong \Z^\omega/ (\Z \cdot q)^\omega = R/qR$, we have $(\RR/q\RR) \cdot (\RRaT_\emptyset/q\RR) = (\RR/q\RR) \cdot ((\RR,+)/qR))^{00}_\emptyset = (\RR/q\RR)^ {00}_\emptyset = \RR^{00}_\emptyset/q\RR$, so $q\RR + \RR\cdot \RRaT_\emptyset = q\RR + \RR^{00}_\emptyset$. Therefore, we conclude that $\RR^{00}_\emptyset \subseteq q\RR + \RR\cdot \RRaT_\emptyset \subseteq \RR \cdot \bar D + \RR\cdot \RRaT_\emptyset$. Applying it to all thick $D \subseteq R$ with $\RRaT_\emptyset \subseteq \bar D$, we get $\RR^{00}_\emptyset \subseteq \RR\cdot \RRaT_\emptyset + \RR\cdot \RRaT_\emptyset$. Since the opposite inclusion is obvious, we are done.
	\end{proof}

	\begin{question}\label{question: number of steps for products}{\,}
		\begin{enumerate}[topsep=0pt]
			\item Does $\Z^{\oplus \omega}$ satisfy \nref{condition:X}{1}?
			\item Does $\Z^\omega$ satisfy \nref{condition:group_R00}{1} or even \nref{condition:X}{1}?
			(Note that here \nref{condition:X}{1} $\rightarrow$ \nref{condition:group_R00}{1} by Proposition~\ref{proposition: equivalence of finite numbers of steps}.)
		\end{enumerate}
	\end{question}

	A negative answer to Question~\ref{question: number of steps for products}(1) would show that the implication \nref{condition:group_R00}{n} $\rightarrow$ \nref{condition:X}{n+1} in Proposition~\ref{proposition: equivalence of finite numbers of steps}(4) cannot be improved to \nref{condition:group_R00}{n} $\rightarrow$ \nref{condition:X}{n}.

	\subsection{Example: the ring \texorpdfstring{$\Z_2^{\omega}$}{Z2ω}}
	\label{example: Z2omega}
	It is natural to ask whether one could strengthen Theorem~\ref{theorem: main algebraic theorem} to saying that $H+R\cdot H$ is already a left ideal (equivalently, a subgroup). A weaker potential conclusion could be that there exists $n$ such that for every $A$-definable finite index subgroup $H$ of $(R,+)$, the $n$-fold sum set $((R \cup \{1\}) \cdot H)^{+n}$ is a left ideal (equivalently, subgroup)\footnote{Note that $H+(R\cdot H)^{+n}$ is a subgroup for some $n$ depending on $H$ (e.g.\ by Lemma \ref{lemma: generation in finitely many steps by generic}).}.
	The next example shows that this may fail, even for a commutative, unital ring of positive characteristic.

	\begin{proposition}
		\label{proposition: unbounded number of steps}
		For $R\coloneqq \Z_2^\omega$, for every $n$ there exists a finite index subgroup $H$ of $(R,+)$ such that $R\cdot H$ does not generate a group in $n$ steps.
	\end{proposition}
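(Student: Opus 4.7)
The plan is to identify $R = \mathbb{Z}_2^\omega$ with the Boolean ring $(\mathcal{P}(\omega), \triangle, \cap)$ via characteristic functions, so that addition becomes symmetric difference and multiplication becomes intersection. The key observation I would establish first is the \emph{support identification}: for any subgroup $H \leq (R, +)$ and any $m \geq 1$,
\[
(R \cdot H)^{+m} = \{A \subseteq \omega : A \subseteq \text{supp}(h_1) \cup \cdots \cup \text{supp}(h_m) \text{ for some } h_1, \ldots, h_m \in H\}.
\]
This follows because $r \cdot h$ sweeps out exactly the subsets of $\text{supp}(h)$ as $r$ varies over $R$, and $\mathbb{Z}_2$-sums of such subsets realize exactly the subsets of the union of the supports. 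In particular, $(R \cdot H)^{+n}$ fails to be a subgroup of $(R,+)$ precisely when there is some $C \subseteq \omega$ which is covered by the union of $m > n$ supports of elements of $H$ but not by any $n$ such supports.

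The task therefore reduces to exhibiting, for each $n$, a finite-index subgroup $H_n \leq (R, +)$ together with a set $C = C_n \subseteq \omega$ such that $C$ is covered by the union of supports of $n+1$ elements of $H_n$ but not by the union of supports of any $n$ elements of $H_n$. The first property gives $C \in \langle R \cdot H_n \rangle$ and the second gives $C \notin (R \cdot H_n)^{+n}$, which is exactly what we need.

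For the construction, I would fix $n$, partition $\omega = B_1 \sqcup \cdots \sqcup B_{n+1}$ into $n+1$ infinite pieces with a distinguished point $a_k \in B_k$, and take $C := \{a_1, \ldots, a_{n+1}\}$. Then $H_n$ would be the intersection of the kernels of $n+1$ linearly independent $\mathbb{Z}_2$-linear functionals $\chi_1, \ldots, \chi_{n+1} : R \to \mathbb{Z}_2$, defined using a Hamel basis extension of $R$ as a $\mathbb{Z}_2$-vector space (necessarily relying on the axiom of choice, which is permitted since we work in the full structure). The functionals would be chosen so that: (i) $\mathbb{1}_\omega \notin H_n$, hence $H_n \subsetneq R$ and $R \cdot H_n \neq R$; (ii) there exist $h_k \in H_n$ with $a_k \in \text{supp}(h_k)$, supplying the covering of $C$ in $n+1$ steps; and (iii) a \emph{separation} property: the support of any single element $h \in H_n$ contains at most one of the distinguished points $a_k$. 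Property (iii) then immediately forces covering $C$ to require at least $n+1$ elements of $H_n$.

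The main obstacle will be arranging property (iii): one needs to set up the $\chi_k$'s so that membership of $h$ in $H_n$ together with $\{a_k, a_l\} \subseteq \text{supp}(h)$ for some $k \neq l$ leads to an inconsistent system of linear constraints on the $\chi$-values of $h$. A natural approach is to let each $\chi_k$ combine an ultrafilter (ring-homomorphism) component on $B_k$ measuring ``density'' of $\text{supp}(h)\cap B_k$ with a non-ring-homomorphism correction supplied via the Hamel basis so that the ring-theoretic constraints from distinct $k$'s become mutually exclusive when applied to a single $h$. Verifying this separation rigorously—and making sure the construction does not inadvertently enlarge $H_n$ via unintended combinations—is the delicate step of the argument.
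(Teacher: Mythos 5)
Your support identification is correct and is a clean way to set up the problem: identifying $R$ with $(P(\omega),\vartriangle,\cap)$, a set $C$ lies in $(R\cdot H)^{+m}$ exactly when $C$ is contained in the union of the supports of $m$ elements of $H$, so one needs a finite index $H$ and a $C$ coverable by finitely many supports but not by $n$ of them (this is also how the paper's proof implicitly proceeds). The construction you propose, however, cannot exist. Properties (ii) and (iii) are mutually contradictory for any subgroup: if $h_1,h_2\in H_n$ satisfy $a_1\in\mathrm{supp}(h_1)$ and $a_2\in\mathrm{supp}(h_2)$, then by (iii) $a_2\notin\mathrm{supp}(h_1)$ and $a_1\notin\mathrm{supp}(h_2)$, so $h_1\vartriangle h_2\in H_n$ has both $a_1$ and $a_2$ in its support, violating (iii). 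No choice of functionals $\chi_k$ (Hamel bases, ultrafilters, corrections, etc.) can circumvent this; it is forced by additive closure of $H_n$ alone, so the ``delicate step'' you flag is genuinely impossible.

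Moreover, the plan cannot be repaired while keeping $C$ a set of $n+1$ points. Whether $C=\{a_1,\dots,a_{n+1}\}$ lies in $(R\cdot H)^{+t}$ depends only on the image $V\leq \Z_2^{n+1}$ of $H$ under the evaluation map $h\mapsto (h(a_1),\dots,h(a_{n+1}))$: elements of $H$ cover $C$ iff their images cover all coordinates, and any covering family in $V$ lifts back to $H$. If $C$ is coverable at all, then for each $k$ the map $v\mapsto v_k$ is a surjective homomorphism $V\to\Z_2$, so exactly half the elements of the finite group $V$ have $k$-th coordinate $1$; choosing $t$ elements of $V$ uniformly at random, the expected number of uncovered coordinates is $(n+1)2^{-t}$, so some $t=\lfloor\log_2(n+1)\rfloor+1$ elements of $H$ already cover $C$ (and for $n=1$ a direct check of the subgroups of $\Z_2^2$ shows one element suffices). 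Hence any witness requiring more than $n$ steps must have size at least $2^n$, and the substantive content of the proposition is exactly this exponential lower bound, which your sketch does not address. The paper's proof supplies it by taking $H$ generated by $P(\omega\setminus 2^n)$ together with $n$ Boolean-independent subsets $A_0,\dots,A_{n-1}$ of $\{0,\dots,2^n-1\}$ and $C=A_0\cup\dots\cup A_{n-1}$, and proving $C\notin (R\cdot H)^{+(n-1)}$ by an induction combined with a linear-algebra argument over $\Z_2$ (solvability of a system determined by the $\Z_2$-independence of the sets $T_i$, producing an atom witnessing non-coverage). Something of that nature, not a pointwise separation property, is what the argument needs.
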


	\begin{proof}
		Recall that $R \cong (P(\omega), \vartriangle, \cap)$, so we can and will work in the latter ring, still denoted by $R$. For $A\subseteq R$, $\langle A \rangle$ will denote the subgroup additively (i.e.\ using $\vartriangle$) generated by $X$.

		Consider any $n \in \omega$. Choose independent subsets $A_0,\dots,A_{n-1}$ of $\{0,\dots,2^n-1\}$. Put $H\coloneqq \langle P(\omega \setminus 2^n) \cup \{A_0,\dots,A_{n-1}\} \rangle$. Clearly $[R:H] < \omega$. It remains to show that the group additively generated by $R\cdot H$ is not generated in $n-1$ steps. It is clear that we can reduce this problem to a ``finite situation'': whenever $A_0,\dots,A_{n-1}$ are independent subsets of a finite set $X$ and $H_n\coloneqq \langle A_0,\dots,A_{n-1} \rangle$, then the group $\langle P(X) \cdot H_n \rangle$ additively generated by $P(X) \cdot H_n$ is not generated in $n-1$ steps. Observe that $A_0 \cup \dots \cup A_{n-1} \in \langle P(X) \cdot H_n \rangle$ (namely, it can be written as $A_0 \vartriangle (X_1 \cap A_1) \vartriangle \dots \vartriangle (X_{n-1} \cap A_{n-1})$, where $X_i\coloneqq X \setminus \bigcup_{j<i} A_j$).

		Thus, it is enough to show the following claim.
		\begin{clm*}
			$A_0 \cup \dots \cup A_{n-1} \notin (P(X) \cdot H_n)^{+(n-1)}$.
		\end{clm*}
		\begin{clmproof}
			We proceed by induction on $n$. The base step $n=1$ is trivial. Let us do the induction step from $n$ to $n+1$. Suppose for a contradiction that $A_0 \cup \dots \cup A_{n}$ can be presented as a sum of $n$ elements of $P(X) \cdot H_{n+1}$, i.e.\
			\begin{equation}
				\tag{$*$}
				\label{eq:uns_*}
				\begin{split}
					&A_0 \cup \dots \cup A_{n} = \\
					&(X_0 \cap (A_0^{\epsilon_{0, 0}} \vartriangle \dots \vartriangle A_{n-1}^{\epsilon_{0, n-1}})) \vartriangle \dots \vartriangle
					(X_{m-1} \cap (A_0^{\epsilon_{m-1, 0}} \vartriangle \dots \vartriangle A_{n-1}^{\epsilon_{m-1, n-1}})) \vartriangle\\
					&(X_m \cap (A_0^{\epsilon_{m, 0}} \vartriangle \dots \vartriangle A_{n-1}^{\epsilon_{m, n-1}} \vartriangle A_n )) \vartriangle \dots \vartriangle
					(X_{n-1} \cap (A_0^{\epsilon_{n-1, 0}} \vartriangle \dots \vartriangle A_{n-1}^{\epsilon_{n-1, n-1}} \vartriangle A_n)),
				\end{split}
			\end{equation}
			for some $X_0,\dots,X_{n-1} \subseteq X$, where $A^{\epsilon_{i,j}} \coloneqq A$ if $\epsilon_{i,j} = 1$ and $A^{\epsilon_{i, j}} \coloneqq \emptyset$ if $\epsilon_{i, j} =0$.
			Put
			\[
			T_i\coloneqq A_0^{\epsilon_{i, 0}} \vartriangle \dots \vartriangle A_{n-1}^{\epsilon_{i, n-1}}
			\]
			for $i<n$.

			By an easy induction on $n$, one can check that for any $\delta_0,\dots,\delta_{n-1} \in \{0,1\}$
			\begin{equation}
				\label{eq:uns_**}
				\tag{$**$}
				\begin{aligned}
					(A_0)_{\delta_0} \cap \dots \cap (A_{n-1})_{\delta_{n-1}} \subseteq T_i
					&\iff \sum \{\delta_j: \epsilon_{i, j} =1\} \textrm{ is odd},\\
					(A_0)_{\delta_0} \cap \dots \cap (A_{n-1})_{\delta_{n-1}} \cap T_i =\emptyset
					&\iff \sum \{\delta_j: \epsilon_{i, j} =1\} \textrm{ is even},
				\end{aligned}
			\end{equation}
			where $(A)_{\delta_j} \coloneqq A$ if $\delta_j =1$, and $(A)_{\delta_j} \coloneqq X \setminus A$ if $\delta_j=0$.

			We will prove now that
			\begin{equation}
				\label{eq:uns_***}
				\tag{$***$}
				T_0^c \cap \dots \cap T_{m-1}^c \cap T_m \cap \dots \cap T_{n-1} \cap A_n \ne \emptyset,
			\end{equation}
			where $T^c\coloneqq X\setminus T$.
			This yields a contradiction with \eqref{eq:uns_*}: any element $a$ in the above intersection belongs to $A_n$ and does not belong to the right hand side of \eqref{eq:uns_*}, since $a\notin T_i\vartriangle A_n$ for $i\geq m$.

			Let $\mathcal{B}$ be the Boolean algebra of subsets of $A_0 \cup \dots \cup A_{n-1}$ generated by the sets $A_0,\dots A_{n-1}$. By the independence of $A_0,\dots,A_n$, the set $A_n$ does not contain any atom of $\mathcal{B}$. On the other hand, each $A_0^{\epsilon_{i, 0}} \vartriangle \dots \vartriangle A_{n-1}^{\epsilon_{i, n-1}}$ is a union of some atoms of $\mathcal{B}$. By these two observations together with \eqref{eq:uns_*}, we get
			\begin{equation}
				\label{eq:uns_*'}
				\tag{$*'$}
				A_0 \cup \dots \cup A_{n-1} = (X_0' \cap T_0) \vartriangle \dots \vartriangle
				(X_{m-1}' \cap T_{m-1}) \vartriangle
				(X_m' \cap T_m) \vartriangle \dots \vartriangle
				(X_{n-1}' \cap T_{n-1}),
			\end{equation}
			for some $X_0',\dots, X_{n-1}'$.

			By \eqref{eq:uns_*'} and the induction hypothesis, we see that \eqref{eq:uns_*'} is a shortest presentation of $A_0 \cup \dots \cup A_{n-1}$ as a union of elements of $P(X) \cdot H_{n-1}$. This easily implies that $T_0,\dots,T_{n-1}$ are $\Z_2$-linearly independent (with $\vartriangle$ as addition), e.g.\ if $T_2=T_0 \vartriangle T_1$, then each atom of $\mathcal{B}$ contained in $T_2$ would be already contained in $T_0$ or in $T_1$, so we could skip $X_2' \cap T_2$ in \eqref{eq:uns_*'} modifying the other $X_i'$'s appropriately, and this would contradict the previous sentence.

			Let $J_i\coloneqq \{ j : \epsilon_{i, j}=1\}$. Consider the following system of equations:
			\[
			\sum_{j \in J_0} \delta_j \equiv 0 \wedge \dots \wedge \sum_{j \in J_{m-1}} \delta_j \equiv 0 \wedge \sum_{j \in J_{m}} \delta_j \equiv 1 \wedge \dots \wedge \sum_{j \in J_{n-1}} \delta_j \equiv 1,
			\]
			where $\equiv$ denotes the congruence modulo $2$.

			Since $T_0,\dots,T_{n-1}$ are $\Z_2$-linearly independent, the matrix of this system is invertible, hence it has a solution $(\delta_0,\dots,\delta_{n-1}) \in \Z_2^n$.
			By \eqref{eq:uns_**}, the atom $(A_0)_{\delta_0} \cap \dots \cap (A_{n-1})_{\delta_{n-1}}$ is contained in $T_0^c \cap \dots \cap T_{m-1}^c \cap T_m \cap \dots \cap T_{n-1}$. Since also $A_n \cap (A_0)_{\delta_0} \cap \dots \cap (A_{n-1})_{\delta_{n-1}} \ne \emptyset$, we get \eqref{eq:uns_***}.
		\end{clmproof}
	\end{proof}

	Finally, we will elaborate on the last example to answer negatively a question from the introduction whether for every ring $R$ and for any $A$-type-definable, bounded index subgroup $H$ of $(\bar R,+)$ the set $(\bar R \cup \{1\}) \cdot H$ generates a group in finitely many steps? As pointed out in the introduction, Theorems~\ref{theorem: Main Theorem} and \ref{theorem: main topological} tell us that for unital rings and for rings of positive characteristic it is true for $H\coloneqq \RRaT_A$ and $H\coloneqq \RRaT_{\topo}$ when $R$ is topological. Our counter-example takes place in $\Z_2^\omega$, which is unital, commutative and of positive characteristic.

	\begin{lemma}\label{lemma: to get lack of generation in fin. many steps}
		Let $R \coloneqq \Z_2^\omega$. Then there are finite index subgroups $H_1 \geq H_2 \geq \dots$ of $(R,+)$ such that for every pair $m \geq n \geq 1$, $(R\cdot H_m)^{+n} \nsubseteq (R\cdot H_n)^{+(n-1)}$.
	\end{lemma}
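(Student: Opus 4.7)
The plan is to adapt the single-level construction in the proof of Proposition~\ref{proposition: unbounded number of steps} by running it in parallel on disjoint ``blocks'' of $\omega$, one block per level of the chain. Identifying $R$ with $(P(\omega),\vartriangle,\cap)$ as in that proof, I pick pairwise disjoint finite subsets $I_1, I_2, \ldots \subseteq \omega$ with $|I_n| \geq 2^n$, and for each $n \geq 1$ fix independent subsets $A_0^n, \ldots, A_{n-1}^n$ of $I_n$ (possible by the size assumption). Then I define
\[
H_n \coloneqq \{X \in R : X \cap I_k \in \langle A_0^k, \ldots, A_{k-1}^k \rangle \text{ for every } k \leq n\}.
\]
It is immediate that $H_1 \supseteq H_2 \supseteq \cdots$, and each $H_n$ has finite index in $(R,+)$, since only finitely many blocks are constrained.

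Fix $m \geq n \geq 1$. I use $a_n \coloneqq A_0^n \cup \cdots \cup A_{n-1}^n$ as the witness. First I check that every $A_i^k$ belongs to every $H_j$: the intersection $A_i^k \cap I_{k'}$ equals $A_i^k$ (when $k' = k$) or $\emptyset$ (otherwise), and either way lies in $\langle A_0^{k'}, \ldots, A_{k'-1}^{k'}\rangle$. Then the identity used in the proof of Proposition~\ref{proposition: unbounded number of steps},
\[
a_n = A_0^n \vartriangle (Y_1 \cap A_1^n) \vartriangle \cdots \vartriangle (Y_{n-1} \cap A_{n-1}^n), \qquad Y_i \coloneqq \omega \setminus \bigcup_{j<i} A_j^n,
\]
exhibits $a_n$ as an $n$-fold sum of elements of $R \cdot H_m$, giving $a_n \in (R \cdot H_m)^{+n}$.

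For the non-containment, I exploit the fact that restriction to $I_n$ is an additive homomorphism on $R$, while by construction $\{Y \cap I_n : Y \in H_n\} = \langle A_0^n, \ldots, A_{n-1}^n\rangle$. Hence, for any $X \in R$ and $Y \in H_n$, $(X \cdot Y) \cap I_n = (X \cap I_n) \cdot (Y \cap I_n) \in P(I_n) \cdot \langle A_0^n, \ldots, A_{n-1}^n\rangle$, and therefore
\[
(R \cdot H_n)^{+(n-1)} \cap I_n \subseteq \bigl(P(I_n) \cdot \langle A_0^n, \ldots, A_{n-1}^n\rangle\bigr)^{+(n-1)}.
\]
Since $a_n \subseteq I_n$, if $a_n$ lay in $(R \cdot H_n)^{+(n-1)}$ it would lie in the right-hand side, contradicting the Claim established inside the proof of Proposition~\ref{proposition: unbounded number of steps}, applied with $I_n$ in place of $X$ and the $n$ independent sets $A_0^n, \ldots, A_{n-1}^n$ in place of $A_0, \ldots, A_{n-1}$.

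The only non-routine design choice is ensuring that the chain is genuinely descending while preserving the non-containment at each level; the disjoint-block device resolves this cleanly. The constraints defining $H_n$ affect only blocks $I_1, \ldots, I_n$, so passing from $H_n$ to a smaller $H_m$ (with $m \geq n$) only imposes new constraints on blocks past $I_n$, which are invisible to the local argument confined to $I_n$.
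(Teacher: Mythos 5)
Your proof is correct, and it runs on the same combinatorial engine as the paper — the Claim inside the proof of Proposition~\ref{proposition: unbounded number of steps} applied on a finite block, together with the observation that intersecting with that block is a homomorphism for both $\vartriangle$ and $\cap$ — but the construction of the chain is genuinely different. The paper keeps one coherent family of independent sets on nested initial segments $Y_1\subseteq Y_2\subseteq\dots$, arranging $A_i^m\cap Y_n=A_i^n$, defines $H_n=\langle P(\omega\setminus Y_n)\cup\{A_0^n,\dots,A_{n-1}^n\}\rangle$, and for a pair $m\geq n$ uses the $m$-dependent witness $A_0^m\cup\dots\cup A_{n-1}^m$: its membership in $(R\cdot H_m)^{+n}$ uses the level-$m$ sets, and its non-membership in $(R\cdot H_n)^{+(n-1)}$ uses the compatibility with level $n$ upon restriction to $Y_n$. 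You instead install a fresh independent family on each of pairwise disjoint blocks $I_n$ and define $H_n$ by constraints on the first $n$ blocks only; this makes the chain descending for free, and since each $A_i^n$ is supported on the single block $I_n$, it lies in \emph{every} $H_m$, so the single witness $a_n$ works for all $m\geq n$ and the coherence bookkeeping disappears. Your restriction-to-$I_n$ step is the exact analogue of the paper's restriction to $Y_n$, and the appeal to the Claim with $X=I_n$ is legitimate since $|I_n|\geq 2^n$ guarantees $n$ Boolean-independent subsets of $I_n$ (note also that only the inclusion $\{Y\cap I_n: Y\in H_n\}\subseteq\langle A_0^n,\dots,A_{n-1}^n\rangle$ is needed, which is immediate from your definition of $H_n$). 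Both routes serve Corollary~\ref{corollary: infinitely many steps in in the power of Z2} equally well; yours is slightly more economical, while the paper's nested-segment version is the direct elaboration of the single-level construction of Proposition~\ref{proposition: unbounded number of steps}.
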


	\begin{proof}
		By recursion with respect to $n$, we construct independent subsets $A_0^n,\dots, A_{n-1}^n$ of $Y_n\coloneqq \{0,\dots,2^{n+1}-1\}$ as follows.

		Let us start from $A_0^1\coloneqq \{0\}$. Assume that $A_0^n,\dots,A_{n-1}^n$ have been constructed. Choose independent subsets $Z_0,\dots,Z_n$ of $Y_{n+1} \setminus Y_n$ (they exist, as $|Y_{n+1} \setminus Y_n| = 2^{n+1}$). Put $A_i^{n+1}\coloneqq A_i^n \cup Z_i$ for $i \leq n-1$, and let $A_n^{n+1} \coloneqq A_{n-1}^n \cup Z_n$.

		Define $H_n\coloneqq \langle P(\omega \setminus Y_n) \cup \{A_0^n,\dots,A_{n-1}^n\} \rangle$ (with respect to the group operation $\vartriangle$). Then $H_1 \geq H_2 \geq \dots$ are finite index subgroups of $(R,+)$ (which is identified with $(P(\omega), \vartriangle)$).

		By the claim in the proof of Proposition~\ref{proposition: unbounded number of steps}, we know that $A_0^n \cup \dots \cup A_{n-1}^n \notin (R\cdot H_n)^{+(n-1)}$. Since for $m \geq n$ we have $A_i^{m} \cap Y_n \coloneqq A_i^n$ for $i \leq n-1$, we get that $A_0^m \cup \dots \cup A_{n-1}^m \notin (R\cdot H_n)^{+(n-1)}$ (as otherwise $A_0^n \cup \dots \cup A_{n-1}^n = (A_0^m \cup \dots \cup A_{n-1}^m) \cap Y_n \in (R\cdot H_n)^{+(n-1)} \cap Y_n = (P(Y_n) \cdot H_n)^{+(n-1)} \subseteq (R\cdot H_n)^{+(n-1)}$, a contradiction).
		On the other hand (as in Proposition~\ref{proposition: unbounded number of steps}), we have $A_0^m \cup \dots \cup A_{n-1}^m = A_0^m \vartriangle (X_1 \cap A_1^m) \vartriangle \dots \vartriangle (X_{n-1} \cap A_{n-1}^m) \in (R\cdot H_m)^{+n}$, where $X_i \coloneqq \omega \setminus \bigcup_{j<i} A_j^m$. So we have proved that $A_0^m \cup \dots \cup A_{n-1}^m \in (R\cdot H_m)^{+n} \setminus (R\cdot H_n)^{+(n-1)}$.
	\end{proof}

	\begin{corollary}\label{corollary: infinitely many steps in in the power of Z2}
		Work with $R\coloneqq \Z_2^\omega$ (equipped with the full structure). Take the sequence $H_1 \geq H_2 \geq \dots$ of subgroups of $(R,+)$ produced in the last lemma. Let $H \coloneqq \bigcap \bar H_n$. Then $H$ is a $\emptyset$-type-definable, bounded index subgroup of $(\bar R,+)$ such that $\bar R\cdot H$ does not generate a group in finitely many steps.
	\end{corollary}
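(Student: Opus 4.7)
The first assertion — that $H$ is $\emptyset$-type-definable with bounded index — is immediate: under the full structure every $H_n$ is $\emptyset$-definable, so every $\bar H_n$ is $\emptyset$-definable of finite index $[R:H_n]$, and $H = \bigcap_n \bar H_n$ is $\emptyset$-type-definable of index at most $\prod_n [R:H_n]$. For the main assertion I would argue by contradiction. Suppose that $\bar R \cdot H$ generates a subgroup in $k$ steps for some $k \geq 1$; since $\bar R \cdot H$ is symmetric and contains $0$, this means $(\bar R \cdot H)^{+k}$ is closed under addition, so $(\bar R \cdot H)^{+(k+1)} \subseteq (\bar R \cdot H)^{+k}$. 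Because $H \subseteq \bar H_{k+1}$, this forces
\[
(\bar R \cdot H)^{+(k+1)} \subseteq (\bar R \cdot \bar H_{k+1})^{+k}. \qquad (\star)
\]
My plan is to falsify $(\star)$ by exhibiting some $y$ in the left-hand side but not in the right-hand side.

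To produce such a $y$ I would invoke the saturation of $\bar R$ and realize the partial type over $\emptyset$
\[
p(\bar r, \bar h) \;:=\; \{h_i \in \bar H_n : n \in \omega,\ 1 \leq i \leq k+1\} \;\cup\; \Bigl\{\sum_{i=1}^{k+1} r_i h_i \notin (\bar R \cdot \bar H_{k+1})^{+k}\Bigr\}.
\]
A finite fragment of $p$ mentions only $n \leq N$ for some $N \geq k+1$, and its realizability in $\bar R$ amounts to $(\bar R \cdot \bar H_N)^{+(k+1)} \not\subseteq (\bar R \cdot \bar H_{k+1})^{+k}$. Since all the sets in sight are $\emptyset$-definable, by elementarity this is equivalent to the failure of $(R\cdot H_N)^{+(k+1)}\subseteq (R\cdot H_{k+1})^{+k}$, which is precisely Lemma~\ref{lemma: to get lack of generation in fin. many steps} at $m = N$, $n = k+1$. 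Hence $p$ is consistent, and any realization $(\bar r, \bar h)$ produced by saturation has all $h_i \in H$, so $y := \sum_i r_i h_i$ lies in $(\bar R \cdot H)^{+(k+1)}$ yet outside $(\bar R \cdot \bar H_{k+1})^{+k}$, contradicting $(\star)$.

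The step I expect to require the most care is this saturation/transfer step: one must check that each $\bar H_n$ and each power $(\bar R \cdot \bar H_n)^{+j}$ is indeed $\emptyset$-definable (which is what lets $p$ sit over $\emptyset$), and that the lemma, which is phrased inside $R$, produces the witnesses in $\bar R$ needed for the negation-of-inclusion statements in the finite fragments of $p$. Once $p$ is seen to be consistent over a parameter set smaller than the saturation degree, the rest is a routine compactness argument.
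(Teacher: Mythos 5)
Your proposal is correct and follows essentially the same route as the paper: the paper also deduces from Lemma~\ref{lemma: to get lack of generation in fin. many steps} by compactness that $(\bar R\cdot H)^{+(k+1)} \nsubseteq (\bar R \cdot \bar H_{k+1})^{+k}$, and then uses $H \subseteq \bar H_{k+1}$ to conclude that $\bar R\cdot H$ is not generated in $k$ steps; your type-realization argument is just this compactness step written out explicitly.
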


	\begin{proof}
		Consider any $n \geq 1$. By the last lemma and compactness, $(\bar R\cdot H) ^{+(n+1)} \nsubseteq (\bar R \cdot \bar H_{n+1})^{+n}$. Hence,
		$(\bar R \cdot H) ^{+(n+1)} \supsetneq (\bar R \cdot H)^{+n}$, and so $\bar R \cdot H$ does not generate a group in $n$ steps.
	\end{proof}

	\section{Algebraic restatements and final comments}\label{section: final comments}

	In this final section, we will explain the purely algebraic meaning of the main results of this paper. But first, let us briefly summarize the remaining questions and the answers obtained in this paper.

	The main remaining questions are:
	\begin{itemize}
		\item Question~\ref{question: introduction, finite number of steps for all rings}, which asks whether for every ring $R$ condition \nref{condition:X}{n} holds for some $n$, and if yes, whether there exists $n$ good for all rings;
		\item Question~\ref{question: 2 steps}, which asks whether \nref{condition:X}{2} or even \nref{condition:X}{1\frac12} holds for all s-unital rings.
	\end{itemize}

	Corollary~\ref{corollary: 3 steps} yields a positive answer to the first question with $n=2\frac12$ for all s-unital rings, and with $n=1\frac12$ for all rings of positive characteristic. Example~\ref{example:independent_functionals} shows that
	we cannot reduce it to $1$ step (not even for s-unital, commutative rings of positive characteristic).
	Theorem~\ref{theorem: bound for fin gen. rings} yields a positive answer to Question~\ref{question: introduction, finite number of steps for all rings} for finitely generated rings
	(albeit with no uniform bound on $n$).

	The list of conditions in Definition~\ref{definition:conditions} is not complete in some sense (which we discuss below), and condition \nref{condition:def_R00}{n} does not
	fit this list very well, as it is not equivalent to \nref{condition:X}{n}; we used \nref{condition:def_R00}{n} as a convenient bridge between \nref{condition:group_R00}{n} and \nref{condition:X}{n}.

	When passing from $(\RR,+)^{0}_A$ to $\RRaT_A$, condition \nref{condition:X}{n} corresponds to the equivalent conditions \nref{condition:X'_subgroup}{n} and \nref{condition:X'_R0}{n}.
	However, what should be the condition corresponding to \nref{condition:group_R00}{n} (having in mind that \nref{condition:group_R00}{n} is essentially an algebraic restatement of \nref{condition:X'_R0}{n})? This is certainly not exactly \nref{condition:def_R00}{n}.
	The condition would be as follows:
	for every $A$-definable (thick) subset $D$ of $R$ such that $\bar D \supseteq \RRaT_A$ there is a descending sequence $(\tilde D_k)_{k \in \omega}$ of $A$-definable, generic and symmetric [or thick] subsets of
	the set $D_i^{+n}$ (where recall that $i \in \{ \leftarrow, \rightarrow, \leftrightarrow\}$) such that
	for all $k \in \omega$ we have:
	\begin{enumerate}
		\item $\tilde D_{k+1} + \tilde D_{k+1} \subseteq \tilde D_k$, and
		\item $R\cdot \tilde D_{k+1} \subseteq \tilde D_k$.
	\end{enumerate}
	By compactness, it follows easily that this condition is indeed
	equivalent to \nref{condition:X}{n}. But how to express it without
	referring to the monster model and $\RRaT_A$?

	In the general case, it seems that we can apply results from Sections 4 and 5 of \cite{CPT22} (with some minor modifications) to express this condition in terms of the so-called ``definable $\delta$-approximate $(\epsilon, \mathbb T^n)$-Bohr neighborhoods in $(R,+)$'' (where $\mathbb T^n$ is the $n$-dimensional torus), but it is rather technical and we are not going to do that here.

	In the special case when we have the full structure on $R$ (so in a purely algebraic context), this can be done using \cite[Proposition 4.16]{GJK22} in the following way:
	one should require the existence of a sequence $(\tilde{D}_k)_{k \in \omega}$ as above for any $D$ which is
	a Bohr neighborhood in $(R,+)$, i.e.\
	an intersection of finitely many sets of the form $\chi^{-1}[(-\frac1m,\frac1m)]$, where $\chi \colon (R,+) \to S^1$ is a character and $m$ is a positive integer (where $S^1$ is identified with the interval $[-\frac12,\frac12)$ with addition modulo $1$).
	Also, as we are working with the full structure, we can, of course, erase the requirement that the $\tilde{D}_k$'s should be $A$-definable. A descending sequence $(\tilde D_k)_{k \in \omega}$, of generic, symmetric subsets of $(R,+)$ satisfying (1) and (2) above could be called a \emph{(weak) ring version of a Bourgain system}. So we see that \nref{condition:X}{n} is equivalent to the existence of such a ring version of a Bourgain system in $D_i^{+n}$ for any Bohr neighborhood $D$ in $(R,+)$. And this is exactly the algebraic meaning of a potential positive answer to Question~\ref{question: introduction, finite number of steps for all rings}, and, in particular, of the answers to this question obtained in Corollary~\ref{corollary: 3 steps} (or Corollary~\ref{corollary: which conditions hold for rings}) and in Theorem~\ref{theorem: bound for fin gen. rings}. In the case when $\RR^{00}_A=\RR^0_A$, so e.g.\ in Corollary~\ref{corollary: 3 steps}, this simplifies as follows:
	the ring Bourgain system $(\tilde D_k)_{k \in \omega}$ can be replaced by a genuine ideal of finite index and contained in $D_i^{+n}$.

	Overall, the above algebraic restatement of \nref{condition:X}{n} seems much more complicated than condition \nref{condition:group_R00}{n} which we were able to completely understand in this paper, showing that it holds for all rings with $n=1\frac12$ and providing examples where it fails with $n=1$.

	If we allow changing $n$, the above requirement $\bar D \supseteq \RRaT_A$ can be dropped, which we now explain. So for any $i \in \{\leftarrow, \rightarrow, \leftrightarrow\}$ and positive half-integer $n$, consider the following condition.

	\begin{enumerate}[label=\textrm{(ii)}$'_n$, ref = \textrm{(ii)$'$}]
		\item
		\label{condition:def_R00'}
		For every generic and symmetric [or thick], $A$-definable subset $D$ of $R$, there is a descending sequence $(\tilde D_k)_{k \in \omega}$ of $A$-definable, generic and symmetric [or thick] subsets of $D_i^{+n}$ such that for all $k \in \omega$ we have:
		\begin{enumerate}
			\item $\tilde D_{k+1} + \tilde D_{k+1} \subseteq \tilde D_k$, and
			\item $R\cdot \tilde D_{k+1} \subseteq \tilde D_k$.
		\end{enumerate}
	\end{enumerate}

	\begin{proposition}
		For any positive integer $n$, \nref{condition:def_R00'}{n} $\rightarrow$ \nref{condition:X}{n} and \nref{condition:X}{n} $\rightarrow$ \nref{condition:def_R00'}{8n}. If $A$ is a model, then $8n$ can be replaced by $4n$. In particular, if $R$ is equipped with the full structure, then $8n$ can be replaced by $4n$.\footnote{We leave stating an analog of this proposition for a positive half-integer but not integer $n$ as an easy exercise.}
	\end{proposition}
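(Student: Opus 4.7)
The plan is to prove the two implications separately. For the forward direction, I would fix an arbitrary $A$-definable thick set $D$ with $\bar D \supseteq \RRaT_A$ and apply \nref{condition:def_R00'}{n} to obtain a sequence $(\tilde D_k)$; then $H_D := \bigcap_k \bar{\tilde D}_k$ is $A$-type-definable, closed under addition by clause (1) of \nref{condition:def_R00'}{n} (hence a subgroup by symmetry), and closed under left multiplication by $\bar R$ by clause (2), so $H_D$ is a type-definable left ideal. Iterating clause (1) gives $\bar{\tilde D}_{k+3}^{+8} \subseteq \bar{\tilde D}_k$, and since the proof of Fact~\ref{fact: G00=G000} yields $\RRaT_A \subseteq \bar{\tilde D}_{k+3}^{+8}$ (as each $\tilde D_{k+3}$ is $A$-definable thick), we get $\RRaT_A \subseteq H_D$, so $H_D$ has bounded index and thus contains $\RR^{00}_A$. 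Therefore $\RR^{00}_A \subseteq H_D \subseteq \bar D_i^{+n}$ for every such $D$. Using the preliminaries' observation that $\RRaT_A$ is the intersection of all $A$-definable thick sets containing it, a standard saturation argument (finite intersections of the family remain in the family) gives $\bigcap_D \bar D_i^{+n} = X_i^{+n}$, hence $\RR^{00}_A \subseteq X_i^{+n}$. Sandwiching with the always-true $X_i^{+n} \subseteq J = \RR^{000}_A \subseteq \RR^{00}_A$ (Fact~\ref{fact: occurrence of 000}) forces $X_i^{+n} = J$, i.e.\ \nref{condition:X}{n}.

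For the reverse direction, assume \nref{condition:X}{n}. By Fact~\ref{fact: conditions equivalent to 000=00}, this forces $\RR^{000}_A = \RR^{00}_A$, so $X_i^{+n} = \RR^{00}_A$ is a type-definable left ideal. Given an $A$-definable thick symmetric $D$, the proof of Fact~\ref{fact: G00=G000} applied to $(\bar R, +)$ yields $\RRaT_A \subseteq \bar D^8$; by distributivity, $X_i = (\bar R \cup \{1\}) \cdot \RRaT_A \subseteq (\bar R \cup \{1\}) \cdot \bar D^8 \subseteq \bar D_i^{+8}$, so $\RR^{00}_A = X_i^{+n} \subseteq \bar D_i^{+8n}$. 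When $A$ is a model, the same proof yields $\RRaT_A \subseteq \bar D^4$ directly (the appeal to \cite{Mas18} inside the proof of Fact~\ref{fact: G00=G000} is not needed), hence the bound $4n$; the full-structure case reduces to this since every subset of $R$ is $\emptyset$-definable.

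I would then set $\tilde D_0 := D_i^{+8n}$ (or $D_i^{+4n}$ in the sharper cases) and inductively construct $\tilde D_{k+1}$ from $\tilde D_k$, maintaining the invariant that $\tilde D_k$ is $A$-definable and symmetric with $\bar{\tilde D}_k \supseteq \RR^{00}_A$. Given such a $\tilde D_k$, compactness applied to $\RR^{00}_A + \RR^{00}_A \subseteq \RR^{00}_A \subseteq \bar{\tilde D}_k$ yields an $A$-definable $E_1 \supseteq \RR^{00}_A$ with $\bar E_1 + \bar E_1 \subseteq \bar{\tilde D}_k$, while one can explicitly take $E_2 := \{x \in R : x \in \tilde D_k \text{ and } \forall r \in R,\ r \cdot x \in \tilde D_k\}$, which is $A$-definable with $\bar E_2 \supseteq \RR^{00}_A$ (as $\RR^{00}_A$ is a left ideal contained in $\bar{\tilde D}_k$) and $\bar R \cdot \bar E_2 \subseteq \bar{\tilde D}_k$ by construction. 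Taking $\tilde D_{k+1} := (E_1 \cap E_2) \cap -(E_1 \cap E_2)$ produces a symmetric $A$-definable set containing $\RRaT_A$, which is therefore thick by the preliminaries, and which satisfies both $\tilde D_{k+1} + \tilde D_{k+1} \subseteq \tilde D_k$ and $R \cdot \tilde D_{k+1} \subseteq \tilde D_k$.

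The main obstacle will be the simultaneous control of the additive and multiplicative closure conditions in the inductive step; the key observation making this tractable is that under \nref{condition:X}{n} the set $X_i^{+n}$ is actually the type-definable left ideal $\RR^{00}_A$ (Fact~\ref{fact: conditions equivalent to 000=00}), so both closure conditions reduce to compactness applied to a single ideal. Secondary technical points are the bounded-index argument for $H_D$ in the forward direction (handled by arranging $H_D \supseteq \RRaT_A$ through iterating clause (1) and then invoking Fact~\ref{fact: G00=G000}) and the verification that each $\tilde D_k$ remains thick or generic at every stage, which follows from the observation recorded in the preliminaries that any $A$-definable symmetric set containing $\RRaT_A$ is automatically thick.
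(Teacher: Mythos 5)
Your proposal is correct and follows essentially the same route as the paper's (very terse) proof: the forward direction is the compactness argument the paper alludes to, carried out by intersecting the chain $(\bar{\tilde D}_k)_k$ to get an $A$-type-definable, bounded-index left ideal squeezed between $\RR^{00}_A$ and $\bar D_i^{+n}$, and the reverse direction uses the same ingredients as the paper, namely $\RRaT_A\subseteq \bar D^{+8}$ (resp.\ $\bar D^{+4}$ over a model) from the proof of Fact~\ref{fact: G00=G000}, the containment $X_i^{+n}\subseteq \bar D_i^{+8n}$ (the paper's $(D^{+m})_i^{+n}\subseteq D_i^{+mn}$), and a compactness construction of the descending sequence inside $D_i^{+8n}$. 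Your explicit inductive construction of the $\tilde D_k$ (including the definable set $E_2$ and the thickness check via the preliminaries' remark) just fills in the details the paper leaves to the reader.
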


	\begin{proof}
		The implication \nref{condition:def_R00'}{n} $\rightarrow$ \nref{condition:X}{n} follows easily by compactness. To show \nref{condition:X}{n} $\rightarrow$ \nref{condition:def_R00'}{8n}, consider any $D$ as in \nref{condition:def_R00'}{8n}. As explained in the proof of Fact \ref{fact: G00=G000}, $\bar D^{+8} \supseteq \RRaT_A$ [and $\bar D^{+4} \supseteq \RRaT_A$ when $A$ is a model].
		So, by \nref{condition:X}{n} and compactness, the desired sequence $(\tilde D_k)_{k \in \omega}$ exists in $(D^{+8})_i ^{+n}$ [in $(D^{+4})_i^{+n}$ when $A$ is a model]. We finish using the fact that $(D^{+m})_i ^{+n} \subseteq D_i^{+mn}$.
	\end{proof}

	By the last proposition, Corollary~\ref{corollary: 3 steps} and Theorem~\ref{theorem: bound for fin gen. rings} yield condition \nref{condition:def_R00'}{n} for the appropriate $n$ (for the rings in question).
	This means that for those rings we have proved that for every $A$-definable, generic and symmetric [or thick] subset $D$ of $(R,+)$ there exists an \emph{$A$-definable} ring Bourgain system in $D_i^{+n}$ for a sufficiently large $n$ (roughly $8$ or $4$ times larger than the one obtained in Corollary~\ref{corollary: 3 steps} and in Theorem~\ref{theorem: bound for fin gen. rings}). And again, whenever $\RR^{00}_A=\RR^0_A$, so e.g.\ in Corollary~\ref{corollary: 3 steps}, the $A$-definable ring Bourgain system $(\tilde D_k)_{k \in \omega}$ can be replaced by a genuine $A$-definable ideal of finite index and contained in $D_i^{+n}$. In the purely algebraic setting, we can drop all the above definability requirements (and the number $n$ is roughly 4 times larger than the one obtained in Corollary~\ref{corollary: 3 steps} and in Theorem~\ref{theorem: bound for fin gen. rings}).

	\section*{Acknowledgments}
	We thank the anonymous referees for helpful remarks.

	\printbibliography

\end{document}